\documentclass[a4paper, 10pt]{amsart}
\usepackage{graphicx}
\usepackage{amsfonts}
\usepackage{indentfirst,bm,amsmath}
\usepackage{amsthm,amssymb}
\usepackage{mathrsfs}
\usepackage{hyperref}

\newtheorem{theorem}{Theorem}[section]
\newtheorem{corollary}[theorem]{Corollary}
\newtheorem{lemma}[theorem]{Lemma}
\newtheorem{proposition}[theorem]{Proposition}
\newtheorem{definition}[theorem]{Definition}
\newtheorem{remark}[theorem]{Remark}
\newtheorem{example}[theorem]{Example}


\begin{document}

\title[]{Riemann-Hurwitz theorem and second main theorem for harmonic morphisms on graphs or metrized complexes}

\author[T. B. Cao \and M. N. Cheng]{ Tingbin Cao \and Mengnan Cheng}

\address[]{Department of Mathematics, Nanchang University, Nanchang city, Jiangxi 330031, P. R. China}
\email{tbcao@ncu.edu.cn, chengmengnan@email.ncu.edu.cn}

\thanks{This paper was partially supported by the National Natural Science Foundation of China (\#11871260).}

\date{}

\subjclass[2010]{14N10; 05E14; 05C10; 30D35}

\keywords{Riemann-Hurwitz theorem; second main theorem; metric graph; harmonic morphism;  metrized complexes of algebraic curves}

\begin{abstract}In this article, we mainly obtain the Riemann-Hurwitz theorems for harmonic morphisms on (vertex-weighted) metric graphs or metrized complexes of algebraic curves, inspired of the recent work on harmonic morphisms of graphs or metrized complexes due to many researchers. By making use of these Riemann-Hurwitz theorems, we then systematically establish the second main theorems for harmonic morphisms on finite graphs, vertex-weighted graphs, (vertex-weighted) metric graphs or metrized complexes of algebraic curves, from the viewpoint of Nevanlinna theory.
\end{abstract}
\maketitle
\tableofcontents

\section{Introduction}
It is known that both Riemann-Roch theorem and Riemann-Hurwitz formula are fundamental results in the theory of divisors on smooth projective curves \cite{E.Arbarello-1985, E.Arbarello-2011},  have been actively and deeply studied. Tropical geometry is a new branch of mathematics, which makes a deep connection between algebraic geometry and combinatorial objects, and provides another connection between graph theory and the theory of algebraic curves. The analogue of an algebraic curve in tropical geometry is an (abstract) tropical curves, which following Mikhalkin \cite{MZ-2008}, can be considered simply as a metric graph. With the development of graph theory, tropical and non-Archimedean geometry in recent years, it is very interesting  that many fundamental theorems in classical algebraic geometry have combinatorial correspondence in tropical geometry. In 2000, Urakawa \cite{Ura-2000} firstly proposed harmonic morphisms on finite simple graphs. In 2007, M. Baker and S. Norine \cite{BN-2007} firstly introduced divisors on finite loopless multigraphs, and obtained a Riemann-Roch theorem on finite graphs. In 2013, O. Amini and L. Caporaso \cite{AC-2013} extended this to the Riemann-Roch theorem on weighted graphs. Their results also been developed to (vertex-weighted) metric graphs and metrized complexes of curves (see \cite{AB-2015, AC-2013, GK-2008, MZ-2008} and therein references). In addition, in \cite{BN-2009} M. Baker and S. Norine studied the category of finite graphs with harmonic morphisms as a discrete analogue of the category of Riemann surfaces with holomorphic maps, and go on to derive a Riemann-Hurwitz formula on finite graphs. It is revealed by Baker \cite{Bak-2008} that the theory of divisors on graphs and tropical curves are not just a formal analogies on that of curves. This arises many researches  \cite{Cap-2014, Ana-2000, BJ-2016, Cha-2013, Mik-2006, BBM-2011, AC-2013, Bak-2008, GK-2008, HKN-2013, MZ-2008, ABBR1-2015, ABBR2-2015} to focus on this topic, and even to higher dimensions (see \cite{cartwright2020tropical, sumi2021tropical} and therein references).\par

The second main theorem in Nevanlinna theory can be regarded as transcendental version of the Riemann-Hurwitz theorem. In 1960, S. S. Chern \cite{chern1960complex} considered the Nevanlinna theory for holomorphic map on Riemann surfaces. The second main theorem for algebraic curves in Nevanlinna theory \cite{ru-ugur-2017, ru-2021} sates that for a nonconstant holomorphic mapping $f$ from a compact Riemann surface $S$ with genus $g$ into one-dimensional complex projective space $\mathbb{P}(\mathbb{C}),$ we have $(q-2)\deg(f)\leq |E|+2(g-1)$ for any $q$ distinct points $a_{1}, \ldots, a_{q}\in\mathbb{P}(\mathbb{C}),$ where $|E|$ is the cardinality of the set $E=f^{-1}\{a_{1}, \ldots, a_{q}\}.$ Thus it is natural and interesting to consider the second main theorem on graphs and  metrized complexes of algebraic curves from the point view of Nevanlinna theory.\par

Let $\mathfrak{C}$, $\mathfrak{C'}$ be metrized complexes of algebraic curves on an algebraically closed field $\kappa$ whose underlying vertex-weighted metric graphs are $(\Gamma, w)$ and $(\Gamma', w'),$ respectively. Let $\varphi=(\phi, \{\phi_v\}_{v\in G})$ be a harmonic morphism defined as in Definition \ref{D7.1}. By introducing the definitions of genus and canonical divisor on metrized complexes of algebraic curves with vertex-weighted metric graphs in Subsection 2.5, we prove the \emph{Riemann-Hurwitz theorem} (see Theorem \ref{the7.6}) \emph{for harmonic morphisms $\varphi$ between metrized complexes $\mathfrak{C}$ and $\mathfrak{C'}$ of algebraical curves over algebraically closed field $\kappa$ as
$$\mathcal{K}_{(\mathfrak{C}, w)}={\varphi}^*\mathcal{K}_{(\mathfrak{C'}, w')}+R_{\varphi},$$
where $\mathcal{K}_{(\mathfrak{C}, w)}$ and $\mathcal{K}_{(\mathfrak{C'}, w')}$ are canonical divisors on $\mathfrak{C}$ and $\mathfrak{C'}$ respectively, $R_{\varphi}$ is the ramification divisor\begin{eqnarray*}
R_{\varphi}=\sum_{v\in V(G)}\left(K_v+A_v-\varphi^{*}K_{v'}-\varphi^{*}A_{v^{'}}\right)+\sum_{v\in V(G)}2\left(w(v)-w'(v')M_{\phi}(v)\right)(v),
\end{eqnarray*} and $v'=\phi(v).$} By this Riemann-Hurwitz theorem, we will obtain the \emph{second main theorem} (Theorem \ref{T7.7}) that \emph{
\begin{eqnarray*}
&&(q+g(\mathfrak{C'}, w')-1)\deg(\varphi)\leq g(\mathfrak{C}, w)-1+|E\cap V(G)|-\sum_{v\in V(G)}\left(g_v-M_{\phi}(v)g_{v^{'}}\right)\\&&-\sum_{v\in V(G)}\left(w(v)-w'(v')M_{\phi}(v)\right)-\frac{1}{2}\sum_{v\in V(G)}(val(v)-M_{\phi}(v)val(v')),
\end{eqnarray*} holds for any $q$ distinct vertices $\{a_1, \ldots , a_q\}\subset V(G')$, where $g(\mathfrak{C}, w)$ and $g(\mathfrak{C'}, w')$ are genus of metrized complexes $\mathfrak{C}$ and $\mathfrak{C'}$ respectively,  $E={\phi}^{-1}(\{a_1, \ldots, a_q\}),$ and $|E\cap V(G)|$ is the cardinality of $E\cap V(G).$}\par
\vskip6pt

The remainder of this paper is organized as follows. In the second section, we will introduce the preliminaries on theory of divisors on finite graphs, vertex-weighted graphs, metric graphs, vertex-weighted metric graphs,  and metrized complexes of algebraic curves for underlying vertex-weighted metric graphs. In Section 3 and Section 4, inspired by the Riemann-Hurwitz theorems by M. Baker and S. Norine \cite{BN-2009} on finite graphs and by L. Caporaso \cite{Cap-2014} on vertex-weighted graphs, we get the second main theorems on finite graphs and vertex-weighted graphs, respectively. In Section 5 and Section 6, we propose the Riemann-Hurwitz theorems and second main theorems on metric graphs and vertex-weighted metric graphs, respectively. At the last section, by modifying the definitions of genus and canonical divisors in Subsection 2.5, we use the harmonic morphism on a metrized complex of algebraic curves from a vertex-weighted metric graph, and then establish the Riemann-Hurwitz theorem and second main theorem on metrized complexes of algebraic curves. Some examples are given to explain our second main theorems.\par

\section{Preliminaries on the theory of divisors}
In this section, we recall the theory of divisors on a finite graph, vertex-weighted graph, (vertex-weighted) metric graph, and (vertex-weighted) metrized complexes of algebraic curves.

\subsection{Theory of divisors on a finite graph}\par
In this subsection, for the reader's convenience, we firstly recall some basic terms from graph theory, and choose conventions that apply both to combinatorics and to algebraic geometry (for more details, refer to see \cite{AC-2013, BN-2007, Bak-2008, Cap-2014} and the references therein).\par

A multigraph is a graph which is permitted to have multiple edges, a graph with no multiple edges is called simple. Throughout this paper, a \emph{finite graph} denoted by $G$ means an unweighted, finite connected multigraph.  We will denote by $V(G)$ and $E(G)$, respectively, the set of vertices and edges of $G$. To every edge $e \in E(G)$ one associates the pair $\{v, v'\}$ of possibly equal vertices which form the boundary of $e,$ we call $v$ and $v'$ the endpoints of the edge $e.$ If $v=v',$ then we say that $e$ is a loop-edge based at $v.$\par

A leaf is a pair $(e, v)$ of a vertex and an edge, where $e$ is the unique edge adjacent to $v.$ We say the $e$ is a leaf-edge and $v$ is a leaf-vertex. A edge $e \in E(G)$ is called a bridge if $ G\setminus e$ is disconnected. In particular, one leaf-edge is a bridge. \par

For a vertex $v \in V(G)$ and an edge $e \in E(G)$, if $e$ is adjacent to $v$, then we write $v \in e$. \par

We call the \emph{ valency} $val(v)$ (or degree $\deg(v)$) of a vertex $v \in V(G)$ is the number of edges having $v$ as end point. We agree that a loop based at $v$ is counted twice.\par

The \emph{genus} $g$ of $G$ is its first Betti number $$g=b_1(G)=|E(G)|-|V(G)|+1.$$
This is the dimension of the cycle space of $G$, and the integer $g=g(G)$ is called the ``cyclomatic number" of $G$.\par

\begin{definition} Given a finite graph $G$, we write $Div(G)$ for the free Abelian group on $V(G)$. An element $D$ of $Div(G)$ is called a divisor on $G$, and is written as a sum $$D=\sum_{v \in V(G)}D(v)(v),$$ where $D(v)\in \mathbb{Z}.$ We say that $D$ is effective, and write $D\geq 0$, if  $D(v) \geq 0$ for all $v \in V(G)$. For $D\in Div(G)$, the degree of a divisor $D$ is defined by the formula $$\deg(D)=\sum_{v \in V(G)}D(v).$$ Denote by $$Div_+(G)=\{D\in Div(G): D\geq 0\}$$ the set of effective divisors on $G$, and by $Div^0(G)$ the set of divisors of degree zero on $G$.\end{definition}

\begin{definition} The canonical divisor $K_G\in Div(G)$ of a finite graph $G$ is defined as $$K_G:=\sum_{v \in V(G)}(val(v)-2)(v).$$ \end{definition}

According to the $val(v)$ of a loop based at $v$ is counted twice and the sum over all vertices $v$ of $val(v)$ equals twice the number of edges in $G$, it is easy to deduce directly that  $$\deg(K_G)=2|E(G)|-2|V(G)|=2g-2$$  (or by the Riemann-Roch theorem for finite graphs with loops \cite[Theorem 3.6]{AC-2013}).\par

Following \cite{AC-2013}, let $G$ be a graph and let $\{e_1, \ldots, e_c\}\subset E(G)$ be the set of its loop-edges. We denote by $\hat{G}$ the new graph obtained by inserting one vertex in the interior of the loop-edge $e_j,$ for all $j=1, \ldots, c$. Observe that $\hat{G}$ has no loops and has the same genus with $G.$ Let $U=\{u_1, \ldots, u_c\}\subset V(\hat{G})$ be the set of vertices added to $G,$ and thus $V(G)=V(\hat{G})\setminus U.$ Because the vertices in $U$ are all $2$-valent, it is clear that the valencies of $G$ and $\hat{G}$ are the same, and hence the canonical divisor $K_{\hat{G}}$ of $\hat{G}$ is
$$K_{\hat{G}}=\sum_{\hat{v} \in V(\hat{G})}(val(\hat{v})-2)(\hat{v})=\sum_{\hat{v} \in V(\hat{G})\setminus U}(val(\hat{v})-2)(\hat{v}).$$
On the basis of graph theory, the sum over all vertices $v$ of $val(v)$ equals twice the number of edges in $G$, so we have $$\deg(K_{\hat{G}})=2|E(\hat{G})|-2|V(\hat{G})|=2g-2$$  (or by the Riemann-Roch theorem for finite graphs with no loops \cite[Theorem 1.12]{BN-2007}).\par

Therefore, we now get that the canonical divisors of $G$ and $\hat{G}$ preserves the same degrees (valencies). Hence, for a graph $G$ with loops we can deal with it by adding a point in the middle of each loop, and turning it into $\hat{G}.$\par

\subsection{Theory of divisors on a vertex-weighted graph}
In this subsection, following \cite{AC-2013, SK-2015}, we briefly recall some properties of vertex-weighted graphs.\par

A \emph{vertex-weighted graph} is a pair $(G, w)$, by which we mean that $G$ is a finite graph and a function $w: V(G)\to \mathbb{Z}_{\geq 0}$ called a weight function on the vertices. The genus, $g(G,w)$, of $(G,w)$ is $$g(G,w)=b_1(G)+\sum_{v \in V(G)}w(v).$$ \par
 
Define a divisor $D$ on $(G, w)$ by $$D=\sum_{v \in V(G)}D(v)(v),$$ where $D(v)\in \mathbb{Z}.$ For any vertex-weighted graph $(G, w),$ its divisor group $Div(G, w)$ is defined as the free Abelian group generated by the vertices of $G$.\par

For a vertex-weighted graph $(G, w),$ we associate to it a \emph{weightless graph} $G^w,$ which is obtained by attaching at every vertex $v\in V(G),$ $w(v)$ loops (or ``$1$-cycles"), denoted by $C_{v}^{1}, \ldots, C_{v}^{w(v)}.$ The new finite graph $G^w$ is called the virtual (weightless) graph. The $C_v^i$ are virtual loops. Notice that the initial graph $G$ is a subgraph of $G^w,$  we have $V(G)=V(G^w)$ and $g(G,w)=g(G^w)$. For the group of divisors of the vertex-weighted graph $(G,w),$ we have $$Div(G,w)=Div(G^w)=Div(G).$$

\begin{definition}The canonical divisor of $(G, w)$ is defined as the canonical divisor of $G^w,$  namely, $$K_{(G,w)}:=K_{(G^w)}=\sum_{v \in V(G^w)}(val_{G^w}(v)-2)v=\sum_{v\in V(G)}(2w(v)-2+val(v))v.$$ \end{definition}

It is easy to deduce directly that $$\deg(K_{(G,w)})=2g(G^w)-2=2g(G,w)-2$$ (or by the Riemann-Roch theorem for vertex-weighted graphs with loops \cite[Theorem 3.8]{AC-2013}).\par

\subsection{Theory of divisors for metric graphs}\par

Let's start by recalling the theory of divisors on  metric graphs. We refer the reader to \cite{Bak-2008, Cha-2013, GK-2008, HKN-2013, MZ-2008, SK-2015} for more details and more references.\par

\begin{definition} A metric graph $\Gamma$ is a metric space such that there exists a finite graph $G$ and a length function $\ell: E(G)\to \mathbb{R}_{>0}$ so that $\Gamma$ is obtained from $(G, \ell)$ by gluing intervals $[0, \ell(e)]$ for $e\in E(G)$ at their endpoints, as prescribed by the combinatorial data of $G.$ The distance $d(x, y)$ between two points $x$ and $y$ in $\Gamma$ is given by the length of the shortest path between them. In this case, we say that $(G, \ell)$ is a model for $\Gamma$.\end{definition}

If $G$ has no loops, then $(G, \ell)$ is called a loopless model. It is possible that a given metric graph $\Gamma$ admits many models $(G, \ell).$ For example, a line segment of length $a$ can be subdivided into many edges whose lengths sum to $a.$ Hence almost all points in $\Gamma$ have valence $2.$\par

Suppose that $\Gamma$ is not a circle. Let $V\subset \Gamma$ be the set of all points of a metric graph $\Gamma$ of valence different from $2,$ where the valence is the number of connected components of $U_{x} \setminus \{x\}$ with $U_{x}$ being any sufficiently small connected neighborhood of $x$ in $\Gamma$. Then define a model $(G_{V}, \ell)$ as follows \cite{Cha-2013}: the vertices of the graph $G_{V}$ are the points in $V,$ and the edges of $G_{V}$ correspond to the connected components of $\Gamma\setminus V.$ These components are necessarily isometric to open intervals, the length of each of which determines the function $\ell: E(G_{V})\rightarrow\mathbb{R}_{>0}.$ Then $(G_{V}, \ell)$ is a model for $\Gamma.$ Usually, we call the pair $(G_{V}, \ell)$ \emph{a canonical module} for $\Gamma,$ and denote  by $(G_0, \ell).$ If the $(G_0, \ell)$ has loops, then replace an additional vertex at the midpoint of each loop edge. We denote the \emph{canonical loopless model }by $(G_{-}, \ell)$ obtained from $(G_{0}, \ell).$ \par

A divisor $D$ on a metric graph $\Gamma$ is an element of the free Abelian group $Div(\Gamma)$ generated by points of $\Gamma$ defined by $$D=\sum_{x\in {\Gamma}} D(x)(x),$$ where $D(x)\in \mathbb{Z}.$  The degree of $D$ is defined by $\deg(D)=\sum_{x\in \Gamma}D(x).$ If $D(x)\geq 0$ for any $x\in {\Gamma}$, then the divisor is effective. \par

\begin{definition} The canonical divisor of a metric graph $\Gamma$ is $$K_{\Gamma}:=\sum_{v \in {V(G)}}(val(v)-2)(v)=\sum_{x \in {\Gamma}}(val(x)-2)(x).$$\end{definition}

It follows from the Riemann-Roch theorem for metric graphs \cite[Proposition 3.1]{GK-2008} and by taking the special divisors zero and $K_{\Gamma}$ that $$\deg(K_{\Gamma})=2g(\Gamma)-2,$$ where the genus $g(\Gamma)$ of a metric graph $\Gamma$ is defined to be its first Betti number, which equals $g(G)$ of any model $(G, \ell)$ of $\Gamma.$\par

\subsection{Theory of divisors for vertex-weighted metric graphs}\par
In this subsection, we introduce the divisors on vertex-weighted metric graphs \cite{AC-2013, BJ-2016, SK-2015}.\par

\begin{definition}
A vertex-weighted metric graph $(\Gamma, w)=(G, w, \ell),$ that is , $\Gamma$ is a metric graph with a model $(G, \ell)$ and a weighted function $w:\Gamma\to {\mathbb{Z}}_{\geq 0}$ such that $w(v)=0$ for all but finitely many point $v$ in $\Gamma.$ \end{definition}

We also denote $(\Gamma, \underline{0})=(G, \underline{0}, \ell)$ to be a pure metric graph $\Gamma$ with a model  $(G, \ell).$

\begin{definition} A pseudo-metric graph is a pair $(G, \ell)$ where $G$ is a finite graph and $\ell$ a pseudo-length function $\ell: E(G)\rightarrow\mathbb{R}_{\geq 0}$ which is allowed to vanish only on loop-edges of $G$ (that is, if $\ell(e)=0$ then $e$ is a loop-edge of $G$).
\end{definition}

Associate to a vertex-weighted metric graph $(\Gamma, w),$ the \emph{pseudo-metric graph} $(G^w, \ell^w)$ is defined as follows: $G^w$ is obtained by attaching to $G$ exactly $w(v)$ loops based at every vertex $v\in  V(G),$ and the pseudo-length function $\ell^w: E(G^w)\to \mathbb{R}_{\geq 0}$ is the extension of $\ell$ vanishing at all the virtual loops. Clearly, the pair $(G^w, \ell^w)$ is uniquely determined.\par

Conversely, to any pseudo-metric graph $(G^{'}, \ell^{'})$ we can associate a unique vertex-weighted metric graph $(G, w, \ell)$ satisfying $G^{'}=G^{w}$ and $\ell^{'}=\ell^{w}$ as follows. $G$ is the subgraph of $G^{'}$ obtained by removing every loop-edge $e\in E(G)$ such that $\ell^{'}(e)=0.$ Next, the length function $\ell$ is the restriction of $\ell^{'}$ to $G;$ finally, for any $v\in V(G)=V(G^{'})$ the weight $w(v)$ is defined to be the number of loop-edges of $G^{'}$ adjacent to $v$ and having length zero.\par

Notice that the pseudo-metric graph $(G^{'}, \ell^{'})$ associate to  a vertex-weighted metric graph $(\Gamma, w)$ is not a metric graph.  Amini and Caporaso \cite{AC-2013} defined the \emph{pure metric graph} $\Gamma_{\varepsilon}^w,$ for every $\varepsilon>0,$ $$\Gamma_{\varepsilon}^w=(G^w, \underline{0},  \ell_{\varepsilon}^w),$$ where $\ell_{\varepsilon}^w(e)=\varepsilon$ for every edge lying in some virtual cycle, and $\ell_{\varepsilon}^w(e)=\ell(e)$ otherwise. Hence $(G^w, \ell^w)=\lim_{\varepsilon\rightarrow 0}\Gamma_{\varepsilon}^w.$ \par

\begin{definition}The genus of a vertex-weighted metric graph $(\Gamma,w)$ is defined by $$g(\Gamma,w)=g(\Gamma)+\sum_{v\in V(G)}w(v),$$ which equals clearly to the genus $g({\Gamma}_{\varepsilon}^w)$ of the pure metric graph $\Gamma_{\varepsilon}^w.$\end{definition}\par

It is shown in \cite{SK-2015} that to a vertex-weighted graph $(G, w)$, one can naturally associate a vertex-weighted metric graph $(\Gamma, w)$ as follows: define $\Gamma$ to be the metric graph associated to $G$, where each edge of $G$ is assigned length $1,$ and extend the weight function $w: V(G)\to \mathbb{Z}_{\geq 0}$ to $w: {\Gamma} \to \mathbb{Z}_{\geq 0}$ by assigning $w(v)=0$ for any $x\in {{\Gamma} \setminus V(G)}.$ Then $\Gamma^w:=\Gamma_{1}^w$ for $\varepsilon=1$ in particular is the pure metric graph associated to $G^w$ (i.e., each edge of $G^w$ is assigned length $1$), and we have $g(G^w)=g(G,w)=g(\Gamma^w)=g(\Gamma,w).$\par

A divisor $D$ on a vertex-weighted metric graph $(\Gamma,w)$ is an element of the free Abelian group $Div(\Gamma,w)$ generated by points of $(\Gamma,w)=(G, w, \ell )$ defined by $D=\sum_{x\in (\Gamma,w)} D(x)(x),$ where $D(x)\in \mathbb{Z}.$ The degree of a divisor $D$ is defined by $\deg(D)=\sum_{x\in (\Gamma, w)}D(x)$. If $D(x)\geq 0$ for all $x\in (G, \ell),$ then the divisor is called effective. \par

\begin{definition}The canonical divisor of a vertex-weighted metric graph $(\Gamma, w)$ is $$K_{(\Gamma,w)}:=K_{\Gamma}+\sum_{v \in V(G)}2w(v)=\sum_{v \in V(G)}(val(v)-2+2w(v))(v).$$ \end{definition}

It follows from the Riemann-Roch theorem for vertex-weighted metric graphs \cite[Theorem 5.4]{AC-2013} that $$\deg(K_{(\Gamma,w)})=2g(\Gamma,w)-2.$$ \par

\subsection{Theory of divisors for metrized complexes of curves}\par

Metrized complexes of curves \cite{ABBR1-2015, ABBR2-2015, AB-2015, BJ-2016} can be considered as objects which interpolate between classical and tropical algebraic geometry. The theory of divisors on metrized complexes of curves generalizes both the classical theory for algebraic curves and the corresponding theory for metric graphs. The former corresponds to the case where $G$ consists of a single vertex $v$ and no edge and $\mathcal{C}_v$ is an arbitrary smooth curve. The latter corresponds to the case where the curves $\mathcal{C}_v$ have genus zero for all $v\in V(G).$ In the following statements, we consider metrized complexes of algebraic curves on an algebraically closed field $\kappa$ whose underlying metric graph is vertex-weighted. \par

\begin{definition}Let $\kappa$ be an algebraically closed field. A metrized complex $\mathfrak{C}$ of $\kappa$-curves consists of the following data:\par
\begin{itemize}
  \item A connected finite graph $G$ with vertex set $V(G)$ and edge set $E(G).$
  \item A vertex-weighted metric graph $(\Gamma, w')$ having a model $(G, \ell),$ where the length function is $\ell: E(G)\rightarrow\mathbb{R}_{>0}.$
  \item For each vertex $v\in V(G),$ a complete, nonsingular, irreducible algebraic curves $\mathcal{C}_v$ over $\kappa.$
  \item For each vertex $v\in V(G),$ a bijection $red_v:$ $e\to x_v^e$ between the edges of $G$ incident to $v$ (with loop edges counted twice) and a subset $\mathcal{A}_v=\{x_v^e\}_{v\in e}$ of $\mathcal{C}_{v}(\kappa).$
\end{itemize}
\end{definition}

For example, a metrized complex $\mathfrak{C}$ over complex field $\mathbb{C}$ can be visualized as a collection of compact Riemann surfaces connected together via real line segments. \par

\begin{definition}The geometric realization $|\mathfrak{C}|$ of a metrized complex $\mathfrak{C}$ over $\kappa$ is defined to be the union of the edges of $G$ and the collection of the curves ${\mathcal{C}}_v,$ with each endpoint $v$ of an edge $e$ identified with the corresponding marked point $x_v^e.$  (see Figure \ref{fig.1}). \end{definition}

\begin{figure}
  \centering
  \includegraphics[width=8cm]{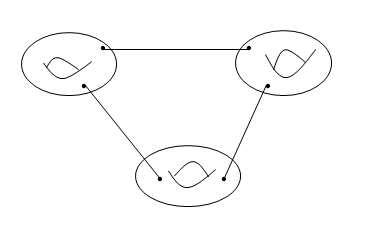}\\
  \caption{The geometric realization of a metrized complex of genus four }\label{fig.1}
\end{figure}

When we think of $|\mathfrak{C}|$ as a set, we identify it with the disjoint union of $\Gamma\setminus V(G)$ and $\cup_{v\in V(G)}\mathcal{C}_{v}(\kappa).$ Hence, if we write $x\in |\mathfrak{C}|,$ it means that $x$ is either a non-vertex point of $\Gamma$ (a graphical point of $\mathfrak{C}$) or a point of ${\mathcal{C}}_v(\kappa)$ for some $v\in V(G)$ (a geometric point of $\mathfrak{C}$). We introduce the genus of a metrized complex of algebraic curves whose underlying metric graph is vertex-weighted as follows. \par

\begin{definition}\label{D2.12}The genus of a metrized complex $\mathfrak{C}$ of $\kappa$-curves is defined as $$g(\mathfrak{C}, w)=g(\Gamma, w)+\sum_{v\in V(G)}g_v,$$ where $g_v$ is the genus of the curve ${\mathcal{C}}_v$ and $g(\Gamma, w)=g(\Gamma)+\sum_{v\in V(G)}w(v)$ is the genus of the vertex-weighted metric graph $\Gamma.$ If without the weight $w(v)$ for all vertex $v\in V(G),$ then we use the short notation $g(\mathfrak{C})$ as in \cite{AB-2015} instead of $g(\mathfrak{C}, w),$ that is $$g(\mathfrak{C})=g(\Gamma)+\sum_{v\in V(G)}g_v.$$
\end{definition}

\emph{A divisor $D$ on a metrized complex} $\mathfrak{C}$ of $\kappa$-curves is an element of the free Abelian group on $|\mathfrak{C}|,$ that is, $$D=\sum_{x\in |\mathfrak{C}|} \alpha_{x}(x),$$ where $\alpha_{x}\in \mathbb{Z},$ all but finitely many of the $\alpha_{x}$ are zero and the sum is over all points of $\Gamma\setminus V(G)$ as well as $\mathcal{C}_{v}(\kappa)$ for $v\in V(G).$ The degree of a divisor $D$ is defined by $\deg(D)=\sum_{x\in |\mathfrak{C}|}\alpha_{x}.$\par

To a divisor $D$ on  $\mathfrak{C}$, it naturally associates a divisor $D_v$ on ${\mathcal{C}}_v$ for each $v\in V(G),$ called the \emph{${\mathcal{C}}_v$-part of the divisor} $D$ which is simply the restriction of $D$ to ${\mathcal{C}}_v,$ i.e. $$D_v=\sum_{x\in {\mathcal{C}}_v(\kappa)}D(x)(x),$$ where $D(x)$ is the coefficient of $x$ in $D.$ As well as, we can associate a divisor $D_{\Gamma}$ on $\Gamma,$ called the \emph{$\Gamma$-part of the divisor} $D,$ defined by $$D_{\Gamma}=\sum_{x\in \Gamma\setminus V(G)}D(x)(x)+\sum_{v\in V(G)}\deg(D_v)(v).$$   It is easy to deduce that $$\deg(D)=\deg (D_{\Gamma}).$$ Therefore, one could equivalently define a divisor on $\mathfrak{C}$ to be an element of the form $D=D_{\Gamma}\oplus \sum_{v}D_{v}$ of $Div(\Gamma)\oplus(\oplus_{v}Div(\mathcal{C}_{v}))$ such that $\deg(D)=\deg (D_{\Gamma})$ for all $v\in V(G).$\par

We introduce the canonical divisor on metrized complexes whose underlying metric graph is vertex-weighted.\par

\begin{definition}\label{D2.13}
The canonical divisor on $\mathfrak{C}$ is defined to be the linear equivalence class of the divisor  \begin{eqnarray*}{\mathcal{K}}_{(\mathfrak{C}, w)}&:=&\sum_{v \in V(G)}(K_v+\sum_{x\in {\mathcal{A}_v}} x)+\sum_{v \in V(G)}2w(v)(v)\\&=&\sum_{v \in V(G)}(K_v+A_v)+\sum_{v \in V(G)}2w(v)(v),\end{eqnarray*} where $K_v$ denotes a divisor of degree $2g_{v}-2$ in the canonical class of ${\mathcal{C}}_v$ and $A_v$ is the divisor in $\mathcal{C}_{v}$ consisting of the sum of the $val(v)$ points in ${\mathcal{A}}_v.$ If without the weight $w(v)$ for all vertex $v\in V(G),$ then we use the short notation ${\mathcal{K}}_{\mathfrak{C}}$ as in \cite{AB-2015} instead of ${\mathcal{K}}_{(\mathfrak{C}, w)},$ that is $${\mathcal{K}}_{\mathfrak{C}}=\sum_{v \in V(G)}(K_v+A_v).$$
The $\mathcal{C}_v$-part and $\Gamma$-part of the canonical divisor  ${\mathcal{K}}_{\mathfrak{C}}$ are respectively defined by $$K_{\mathcal{C}_{v}}:=K_v+A_v \quad\mbox{and}\quad K^{\#}:=\sum_{v\in V(G)}(val(v)+2g_v-2)(v)=K_{\Gamma}+\sum_{v\in V(G)}2g_v.$$\end{definition}

It follows from the Riemann-Roch theorem for metrized complexes of algebraic curves \cite[Theorem 1.4]{AB-2015} that \begin{equation}\label{Eq0}\deg({\mathcal{K}}_{\mathfrak{C}})=2g(\mathfrak{C})-2.\end{equation}
Then by the definition of the genus of a metrized complex of curves $\mathfrak{C}$ of $\kappa$-curves, we have $$\deg({\mathcal{K}}_{\mathfrak{C}})=2g(\Gamma)-2+\sum_{v\in V(G)}2g_v=\sum_{v\in V(G)}(\deg_G(v)+2g_v-2)=\deg(K^{\#}).$$

Hence, it follows from Definition \ref{D2.12}, Definition \ref{D2.13} and \eqref{Eq0} that
\begin{eqnarray}\nonumber\deg({\mathcal{K}}_{(\mathfrak{C}, w)})&=&\deg({\mathcal{K}}_{\mathfrak{C}})+\sum_{v \in V(G)}2w(v)\\
&=&2g(\mathfrak{C})-2+\sum_{v \in V(G)}2w(v)\\\nonumber
&=&2\left(g(\Gamma)+\sum_{v\in V(G)}g_v\right)-2+\sum_{v \in V(G)}2w(v)\\\nonumber
&=&2g(\Gamma, w)+2\sum_{v\in V(G)}g_v-2\\\nonumber
&=&2g(\mathfrak{C}, w)-2.\end{eqnarray}

\section{Second main theorem on finite graphs}
\subsection{Harmonic morphism between finite graphs}\par

As shown in the previous section, we can turn a finite graph $G$ with loops into one $\hat{G}$ without loops. We will only discuss harmonic morphism between finite graphs without loops throughout this section.\par
Let $G$ and $G'$ be two finite graphs. A function $\phi:V(G)\cup E(G) \to V(G')\cup E(G')$ is said to be a morphism from $G$ to $G'$ if $\phi(V(G))\subseteq V(G')$, and for every edge $e\in E(G)$ with endpoints $v_1$ and $v_2$, either $\phi(e)\in E(G')$ and $\phi(v_1)$, $\phi(v_2)$ are the endpoints of $\phi(e)$, or $\phi(e)\in V(G')$ and $\phi(v_1)=\phi(e)=\phi(v_2)$. If $\phi(E(G)) \subseteq E(G'),$ then morphism $\phi$ is called a homomorphism. A bijective homomorphism is called an isomorphism, and an automorphism $\phi:G\to G$ is a bijective homomorphism.\par

Let $\phi:G\to G'$ be a morphism. For each vertex $v\in V(G)$, the \emph{vertical multiplicity} $V_{\phi}(v)$ is defined to be the number of \emph{vertical edges} incident to $v,$ that is $$V_{\phi}(v)=|\{e\in E(G): v\in e, \phi(e)=\phi(v)\}|.$$\par

Harmonic morphisms between simple graphs were built by Urakawa \cite{Ura-2000}. The following definitions and lemmas are proposed by Baker and Norine \cite{BN-2009}, including the definition of harmonic morphism as the direct graph analogue of a holomorphic map between Riemann surfaces:\par

\begin{definition} A morphism $\phi:G\to G'$ is said to be harmonic (horizontally conformal) if, for all $v\in V(G),$ $v'\in V(G')$ such that $v'=\phi(v),$ the quantity $|\{e\in E(G)|v\in e, \phi(e)=e'\}|$ is the same for all edges $e'\in E(G')$ such that $v'\in e'$.
\end{definition}

Let $\phi$ be harmonic. The \emph{horizontal multiplicity} of $\phi$ at each $v\in V(G)$ is given by
\begin{eqnarray*}
M_{\phi}(v)=\left\{
  \begin{array}{ll}
    0, & |V(G')|=1; \\
    |\{e\in E(G): v\in e, \phi(e)=e'\}|\, \mbox{for any}\\\mbox{edge}\, e'\in E(G')\, \mbox{such that}\, \phi(v)\in e', & |V(G')|>1.
  \end{array}
\right.
\end{eqnarray*}

We have a basic formula relating the horizontal and vertical multiplicities as follows:
\begin{equation*}val(v)=val(\phi(v))M_{\phi}(v)+V_{\phi}(v)\end{equation*} for any vertex $v\in V(G).$

If $M_{\phi}(v)\geq 1$ for every $v\in V(G),$ then we say the harmonic morphism $\phi: G\to G'$ is \emph{nondegenerate}.

The \emph{degree of the harmonic morphism} $\phi: G\to G'$ is defined as
\begin{eqnarray*}
\deg(\phi)=\left\{
  \begin{array}{ll}
    0, & |V(G')|=1; \\
    |\{e\in E(G): \phi(e)=e'\}| \,\mbox{for any edge}\, e'\in E(G'), & |V(G')|>1.
  \end{array}
\right.
\end{eqnarray*}
\par

\begin{figure}
  \centering
  \includegraphics[width=10cm]{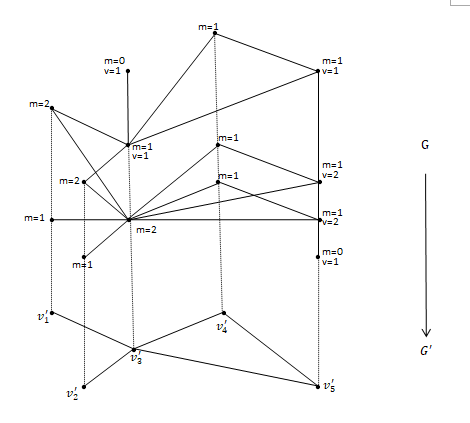}\\
  \caption{A harmonic morphism $\phi:G\to G'$ of degree three}\label{fig.2}
\end{figure}

It is proved \cite[Lemma 2.3]{BN-2009} that the degree $\deg(\phi)$ is independent of the choice of the edge $e'\in E(G').$  The following lemma says that the degree of a harmonic morphism $\phi: G\to G'$ is just the number of pre-images under $\phi$ of any vertex of $G',$ counting multiplicities.\par

\begin{lemma}\cite[Lemma 2.3]{BN-2009} \label{L3.2} For any vertex $v'\in V(G'),$ we have $$\deg(\phi)=\sum_{v\in V(G)\atop \phi(v)=v'}M_{\phi}(v).$$\end{lemma}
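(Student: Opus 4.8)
The plan is to prove Lemma \ref{L3.2} by a double-counting (edge-fibering) argument, exploiting the definition of $\deg(\phi)$ as the number of edges lying over a fixed edge $e'\in E(G')$ and the fact (already established) that this count is independent of $e'$. We may assume $|V(G')|>1$, since otherwise both sides are zero by convention. Fix a vertex $v'\in V(G')$. Since $G'$ is connected and has at least two vertices, $v'$ has at least one incident edge; choose one such $e'\in E(G')$, say with endpoints $v'$ and $u'$. We then count in two ways the set $S=\{e\in E(G):\phi(e)=e'\}$.

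First, by the definition of degree, $|S|=\deg(\phi)$. Second, I would partition $S$ according to which endpoint of each $e\in S$ maps to $v'$. If $e\in S$ has endpoints $v_1,v_2$, then $\phi(e)=e'\in E(G')$ forces $\{\phi(v_1),\phi(v_2)\}=\{v',u'\}$, so exactly one endpoint of $e$ (note $v'\neq u'$, and even a loop $e$ would need $\phi(v_1)=\phi(v_2)$, which is impossible here) lies in $\phi^{-1}(v')$. Grouping the edges of $S$ by that endpoint $v\in\phi^{-1}(v')$, we get
\begin{equation*}
\deg(\phi)=|S|=\sum_{v\in V(G),\,\phi(v)=v'}\bigl|\{e\in E(G): v\in e,\ \phi(e)=e'\}\bigr|.
\end{equation*}
Now for each $v$ with $\phi(v)=v'$, the edge $e'$ is incident to $v'=\phi(v)$, so by the definition of the horizontal multiplicity $M_{\phi}(v)$ (in the case $|V(G')|>1$) the inner cardinality equals $M_{\phi}(v)$ exactly. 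Substituting yields $\deg(\phi)=\sum_{v\in V(G),\,\phi(v)=v'}M_{\phi}(v)$, which is the claim.

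The only genuinely delicate point is the bookkeeping for loop-edges and for multiplicities when $v$ is a vertical vertex: I should make clear that edges of $S$ cannot be loops of $G$ mapping to a non-loop $e'$ (a morphism sends a loop at $v$ either to a loop at $\phi(v)$ or to the vertex $\phi(v)$, and $e'$ is not a loop because $v'\neq u'$), so each $e\in S$ contributes to precisely one summand; and that the inner count is literally the defining expression for $M_{\phi}(v)$, so there is no hidden dependence on the choice of $e'$ among the edges incident to $v'$ — that independence is exactly the harmonicity hypothesis. A brief remark that the total $|S|$ is independent of the chosen $e'$ (already proved in \cite[Lemma 2.3]{BN-2009}) closes the argument, since the right-hand side manifestly does not involve $e'$.
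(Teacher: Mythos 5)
Your argument is correct: the case $|V(G')|=1$ is handled by the conventions, and for $|V(G')|>1$ the partition of $\{e\in E(G):\phi(e)=e'\}$ according to the unique endpoint lying over $v'$ (which exists and is unique because the graphs in this section are loopless and $e'$ has distinct endpoints) gives exactly $\sum_{v\in\phi^{-1}(v')}M_{\phi}(v)$, with harmonicity ensuring the inner counts are the $M_{\phi}(v)$ regardless of which edge $e'$ incident to $v'$ was chosen. The paper itself states this lemma as a citation of \cite{BN-2009} without reproducing a proof, and your double-counting is precisely the standard argument given there, so there is nothing to add.
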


A harmonic morphism of finite graphs must be either constant or surjective, as with morphisms of Riemann surfaces in algebraic geometry.\par

\begin{lemma}\cite[Lemma 2.4]{BN-2009} Let $\phi: G\to G'$ be a harmonic morphism with $|V(G')|>1.$ Then $\deg(\phi)=0$ if and only if $\phi$ is constant, and $\deg(\phi)>0$ if and only if $\phi$ is surjective.\end{lemma}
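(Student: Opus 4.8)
The plan is to reduce the whole statement to the first equivalence, $\deg(\phi)=0$ if and only if $\phi$ is constant, and then obtain the surjectivity dichotomy almost for free. Indeed, when $|V(G')|>1$ the degree is by definition the cardinality $|\{e\in E(G):\phi(e)=e'\}|$, so $\deg(\phi)\ge 0$ always holds, and hence $\deg(\phi)>0$ is equivalent to $\deg(\phi)\ne 0$, i.e.\ to $\phi$ being non-constant once the first equivalence is established.

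First I would dispatch the trivial implication: if $\phi$ is constant, then $\phi(E(G))\subseteq V(G')$, so every edge of $G$ is vertical and $|\{e\in E(G):\phi(e)=e'\}|=0$ for every $e'\in E(G')$; thus $\deg(\phi)=0$. For the converse, I would start from $\deg(\phi)=0$. Since $G'$ is connected with $|V(G')|>1$, it has at least one edge, so the defining expression is meaningful; by Lemma \ref{L3.2} (more precisely by the independence statement \cite[Lemma 2.3]{BN-2009}) the value $|\{e\in E(G):\phi(e)=e'\}|$ does not depend on the choice of $e'$, and therefore it vanishes for all $e'\in E(G')$. This means no edge of $G$ maps onto an edge of $G'$; equivalently $\phi(E(G))\subseteq V(G')$ and every edge of $G$ is vertical. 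Then I would invoke connectedness of $G$: for any edge $e=\{v_1,v_2\}$, the morphism condition together with $\phi(e)\in V(G')$ forces $\phi(v_1)=\phi(e)=\phi(v_2)$, so $\phi$ agrees on the two endpoints of every edge; propagating this along paths in the connected graph $G$ shows $\phi$ is constant on $V(G)$, hence constant.

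For the surjectivity part I would argue as follows. If $\phi$ is surjective, then it cannot be constant, since $|V(G')|>1$ while a constant morphism has one-point image; hence $\deg(\phi)\ne 0$, and since $\deg(\phi)\ge 0$ this gives $\deg(\phi)>0$. Conversely, if $\deg(\phi)>0$, then for every $v'\in V(G')$ Lemma \ref{L3.2} yields $\deg(\phi)=\sum_{\phi(v)=v'}M_{\phi}(v)>0$, so some vertex of $G$ maps to $v'$; and for every $e'\in E(G')$ the defining equality $\deg(\phi)=|\{e\in E(G):\phi(e)=e'\}|>0$ produces an edge of $G$ mapping to $e'$. Since $\phi$ sends vertices to vertices, this shows $\phi$ is surjective. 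Combining the two equivalences one also recovers $\deg(\phi)>0 \Leftrightarrow \phi \text{ non-constant} \Leftrightarrow \phi \text{ surjective}$.

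The only step I expect to require genuine care is the converse direction of the first equivalence, that is, passing from $\deg(\phi)=0$ to ``$\phi$ constant'': the two points there are (i) using independence of $\deg(\phi)$ from the chosen edge to upgrade ``no edge maps to this particular $e'$'' to ``no edge of $G$ maps to any edge of $G'$'', and (ii) using connectedness of $G$ to turn the local statement that $\phi$ collapses each edge into global constancy. Everything else is routine manipulation of the definitions of degree and horizontal multiplicity together with Lemma \ref{L3.2}; in particular the identity $val(v)=val(\phi(v))M_{\phi}(v)+V_{\phi}(v)$ is not needed here.
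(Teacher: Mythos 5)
Your argument is correct. Note, however, that the paper does not prove this lemma at all: it is quoted verbatim from Baker--Norine \cite[Lemma 2.4]{BN-2009} as a known result, so there is no in-paper proof to compare against. Your proof is the standard one from that reference: constancy $\Leftrightarrow$ all edges vertical $\Leftrightarrow$ $\deg(\phi)=0$ (using connectedness of $G$ to propagate $\phi(v_1)=\phi(v_2)$ along paths, and the independence of the degree from the choice of $e'$), and surjectivity from $\deg(\phi)>0$ via $\deg(\phi)=\sum_{\phi(v)=v'}M_{\phi}(v)$ for vertices together with the defining count $|\{e\in E(G):\phi(e)=e'\}|=\deg(\phi)>0$ for edges. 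All steps are sound, and you are right that the identity $val(v)=val(\phi(v))M_{\phi}(v)+V_{\phi}(v)$ is not needed.
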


\begin{definition}
Let $\phi: G\to G'$ be a harmonic morphism, the pullback homomorphism $\phi^*: Div(G')\to Div(G)$ is defined by $$\phi^*(D')=\sum_{v'\in V(G')}\sum_{v\in V(G)\atop \phi(v)=v'}M_{\phi}(v)D'(v')(v)=\sum_{v\in V(G)}M_{\phi}(v)D'(\phi(v))(v).$$ The push-forward homomorphism $\phi_{*}: Div(G)\to Div(G')$ is similarly defined by
$$\phi_{*}(D)=\sum_{v\in V(G)}D(v)(\phi(v)).$$
\end{definition}

\begin{lemma}\cite[Lemma 2.8]{BN-2009} If $\phi: G\to G'$ is a harmonic morphism and $D'\in Div(G')$, then $$\deg(\phi^*(D'))=\deg(\phi)\deg(D').$$ \end{lemma}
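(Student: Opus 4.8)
The plan is to unwind the definition of $\deg(\phi^{*}(D'))$ and interchange the order of summation so that Lemma \ref{L3.2} can be applied fiberwise. First I would start from the second expression for the pullback in the definition preceding the statement, namely $\phi^{*}(D') = \sum_{v\in V(G)} M_{\phi}(v) D'(\phi(v))(v)$, and take degrees to get
\[
\deg(\phi^{*}(D')) = \sum_{v\in V(G)} M_{\phi}(v) D'(\phi(v)).
\]
Then I would regroup the right-hand side according to the value $v' = \phi(v)\in V(G')$, which gives
\[
\deg(\phi^{*}(D')) = \sum_{v'\in V(G')} D'(v') \Bigl( \sum_{v\in V(G),\,\phi(v)=v'} M_{\phi}(v) \Bigr).
\]

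Next I would invoke Lemma \ref{L3.2}, which says precisely that the inner sum $\sum_{\phi(v)=v'} M_{\phi}(v)$ equals $\deg(\phi)$ for every $v'\in V(G')$. Substituting this and pulling the constant $\deg(\phi)$ out of the outer sum yields
\[
\deg(\phi^{*}(D')) = \deg(\phi)\sum_{v'\in V(G')} D'(v') = \deg(\phi)\deg(D'),
\]
which is the assertion.

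The only point needing separate mention is the degenerate case $|V(G')|=1$: there $\deg(\phi)=0$ and $M_{\phi}(v)=0$ for all $v\in V(G)$ by the definitions above, so $\phi^{*}(D')=0$ and both sides vanish, so the identity still holds. Consequently I do not anticipate any genuine obstacle here; the entire content is carried by the fiberwise count in Lemma \ref{L3.2}, and the proof is just a bookkeeping interchange of summation order together with this substitution.
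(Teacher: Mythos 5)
Your argument is correct and is exactly the standard computation: the paper itself gives no proof of this lemma (it is quoted from Baker--Norine \cite[Lemma 2.8]{BN-2009}), and the cited source proves it in the same way, by interchanging the order of summation and applying the fiberwise degree formula of Lemma \ref{L3.2}. Your separate treatment of the degenerate case $|V(G')|=1$ is a harmless extra check, since Lemma \ref{L3.2} already covers it with both sides equal to zero.
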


\begin{lemma}\cite[Lemma 4.1]{BN-2009} Let $\phi: G\to G'$ be a harmonic morphism and $D'\in Div(G')$, then
$$\phi_{*}(\phi^{*})(D')=\deg(\phi)D'.$$\end{lemma}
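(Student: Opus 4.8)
The plan is to verify the identity coefficient by coefficient, reducing everything to the fiber-summation formula of Lemma \ref{L3.2}. First I would unwind the definitions: by the definition of the pullback, $\phi^*(D')=\sum_{v\in V(G)}M_{\phi}(v)D'(\phi(v))(v)$, and then applying the push-forward termwise gives
\[
\phi_{*}(\phi^{*}(D'))=\sum_{v\in V(G)}M_{\phi}(v)D'(\phi(v))(\phi(v)).
\]
The next step is to regroup this sum according to the fibers of $\phi$ over $V(G')$: collecting all $v\in V(G)$ with $\phi(v)=v'$ for a fixed $v'\in V(G')$, the coefficient of $(v')$ becomes $\bigl(\sum_{v\in V(G),\,\phi(v)=v'}M_{\phi}(v)\bigr)D'(v')$, since $D'(\phi(v))=D'(v')$ is constant on each fiber.

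Finally I would invoke Lemma \ref{L3.2}, which says precisely that $\sum_{v\in V(G),\,\phi(v)=v'}M_{\phi}(v)=\deg(\phi)$ for every $v'\in V(G')$. Substituting this into the previous line yields
\[
\phi_{*}(\phi^{*}(D'))=\sum_{v'\in V(G')}\deg(\phi)\,D'(v')(v')=\deg(\phi)\sum_{v'\in V(G')}D'(v')(v')=\deg(\phi)D',
\]
which is the claim. In the degenerate case $|V(G')|=1$ one checks separately that both sides vanish: there $M_{\phi}(v)=0$ for all $v$, so $\phi^{*}(D')=0$, while $\deg(\phi)=0$ as well.

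Since every step is a direct manipulation and the one nontrivial input (the fiber-summation formula) is already available as Lemma \ref{L3.2}, I do not anticipate any genuine obstacle; the only point requiring a little care is the bookkeeping when interchanging the order of summation and grouping by fibers, together with the trivial-degree case just mentioned.
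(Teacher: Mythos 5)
Your computation is correct: unwinding the definitions of $\phi^{*}$ and $\phi_{*}$, regrouping over the fibers $\phi^{-1}(v')$, and invoking Lemma \ref{L3.2} is exactly the standard argument, and your separate treatment of the degenerate case $|V(G')|=1$ (where $M_{\phi}(v)=0$ for all $v$, so both sides vanish) closes the only loose end. The paper itself gives no proof of this statement --- it is quoted directly from \cite[Lemma 4.1]{BN-2009} --- and your argument coincides with the proof found in that reference, so there is nothing to add.
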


\subsection{Riemann-Hurwitz theorem on finite graphs}
In 2009, Baker and Norine firstly obtained the Riemann -Hurwitz theorem on finite graphs as follows.\par

\begin{theorem}\cite[Theorem 2.9]{BN-2009}\label{T3.7} Let $G$, $G'$ be finite graphs, and let $\phi:G\to G'$ be a harmonic morphism. Then:\par

(i). The canonical divisors on $G$ and $G'$ are related by the formula $$K_G=\phi^*K_{G'}+R_G.$$ where $$R_G=2\sum_{v\in V(G)}(M_{\phi}(v)-1)(v)+\sum_{v\in V(G)}V_{\phi}(v)(v).$$\par

(ii). If $G$, $G'$ have genus $g$ and $g'$, respectively, then $$2g-2=\deg(\phi)(2g'-2)+\sum_{v\in V(G)}(2(M_{\phi}(v)-1)+V_{\phi}(v)).$$\par

(iii). If $\phi$ is nonconstant, then $2g-2\geq \deg(\phi)(2g'-2)$ and $g\geq g'$.\par
\end{theorem}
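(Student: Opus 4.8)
The plan is to prove part (i) by a coefficientwise computation on $V(G)$, and then to read off parts (ii) and (iii) from it. For (i), fix a vertex $v\in V(G)$, put $v'=\phi(v)$, and compare the coefficient of $v$ on the two sides of $K_G=\phi^{*}K_{G'}+R_G$. The coefficient of $v$ in $K_G$ is $val(v)-2$; by the definition of $\phi^{*}$ the coefficient of $v$ in $\phi^{*}K_{G'}$ is $M_{\phi}(v)K_{G'}(v')=M_{\phi}(v)(val(v')-2)$, and the coefficient of $v$ in the proposed $R_G$ is $2(M_{\phi}(v)-1)+V_{\phi}(v)$. Adding the last two gives $M_{\phi}(v)\,val(v')+V_{\phi}(v)-2$, which equals $val(v)-2$ by the basic local identity $val(v)=val(\phi(v))M_{\phi}(v)+V_{\phi}(v)$ recorded above; this proves (i). The case $|V(G')|=1$ needs no separate treatment, since then $M_{\phi}\equiv 0$, $\phi^{*}K_{G'}=0$ and $V_{\phi}(v)=val(v)$. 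Part (ii) is then immediate on taking degrees in (i): one uses $\deg(K_G)=2g-2$ and $\deg(K_{G'})=2g'-2$ established above, the identity $\deg(\phi^{*}K_{G'})=\deg(\phi)\deg(K_{G'})$, and the expression $\deg(R_G)=\sum_{v\in V(G)}\big(2(M_{\phi}(v)-1)+V_{\phi}(v)\big)$.

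For part (iii), the content is to show $\deg(R_G)\geq 0$ when $\phi$ is nonconstant; note that $R_G$ itself need not be effective, since a vertex incident only to vertical edges has $M_{\phi}(v)=0$ and contributes $V_{\phi}(v)-2$, possibly $-1$. I would first rewrite: summing $V_{\phi}$ over all vertices counts each vertical edge exactly twice (no loops are present), so $\deg(R_G)=2\big(|E_{\mathrm{vert}}|-|V(G)|+\sum_{v}M_{\phi}(v)\big)$, where $E_{\mathrm{vert}}$ is the set of vertical edges. Put $Z=\{v\in V(G):M_{\phi}(v)=0\}$, which is exactly the set of vertices incident only to vertical edges. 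Since $\phi$ is nonconstant, $\deg(\phi)\geq 1$, so $G$ has at least one non-vertical edge; hence no connected component of the subgraph $H=(V(G),E_{\mathrm{vert}})$ can consist only of vertices of $Z$, as such a component would be a connected component of $G$ and would force every edge of $G$ to be vertical. Therefore each component $C$ of $H$ contains a vertex outside $Z$, so the number of vertices of $C$ in $Z$ is at most $|V(C)|-1$, which is at most the number of edges of $C$; summing over components gives $|Z|\leq |E_{\mathrm{vert}}|$. Since each vertex outside $Z$ contributes at least $1$ to $\sum_{v}M_{\phi}(v)$, we get $\deg(R_G)\geq 2\big(|Z|-|V(G)|+(|V(G)|-|Z|)\big)=0$. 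Finally, rewriting this as $g-1\geq\deg(\phi)(g'-1)$: if $g'\geq 1$ then $\deg(\phi)(g'-1)\geq g'-1$ since $\deg(\phi)\geq 1$, giving $g\geq g'$, while if $g'=0$ then $g\geq 0=g'$ trivially.

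I expect the only genuine obstacle to be the combinatorial inequality $|Z|\leq|E_{\mathrm{vert}}|$ in part (iii) --- making precise that the part of $G$ collapsed by $\phi$ cannot outweigh the vertical edges supporting it; parts (i), (ii) and the step from $\deg(R_G)\geq 0$ to $g\geq g'$ are routine manipulations with the local multiplicity identity and the degree formulas recorded in Section 2.
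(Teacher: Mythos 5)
The paper does not actually prove this theorem: it is imported verbatim from Baker--Norine \cite{BN-2009} as a quoted result, so there is no in-paper argument to compare you against. Judged on its own, your proof is correct and complete. Parts (i) and (ii) are exactly the expected route: the coefficientwise identity at each $v$ reduces to the local relation $val(v)=val(\phi(v))M_{\phi}(v)+V_{\phi}(v)$, and (ii) follows by taking degrees using $\deg(\phi^{*}K_{G'})=\deg(\phi)\deg(K_{G'})$; your remark that the degenerate case $|V(G')|=1$ needs no special handling is also right. Part (iii) is the only place where an idea is required, since $R_G$ is not pointwise effective (a collapsed leaf contributes $-1$), and your argument is sound: the identity $\sum_{v}V_{\phi}(v)=2|E_{\mathrm{vert}}|$ (valid because the section works with loopless graphs), the identification of $Z=\{v:M_{\phi}(v)=0\}$ with the set of vertices all of whose edges are vertical (using connectedness of $G'$ and surjectivity of a nonconstant harmonic morphism), the observation that an all-$Z$ component of the vertical subgraph would be all of $G$ and force $\phi$ constant, and the resulting bound $|Z|\le|E_{\mathrm{vert}}|$ via the spanning-tree inequality on each component together give $\deg(R_G)\ge 0$; the passage to $g\ge g'$ by splitting into $g'\ge 1$ and $g'=0$ is also correct. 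Your component-counting argument is a clean, self-contained way to control the negative contributions of collapsed vertices, and it is a legitimate alternative to the treatment in the cited source; nothing in it is circular with respect to the lemmas the paper records (Lemmas 3.2--3.5), which are all it uses.
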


\subsection{Second main theorem on finite graphs}\par
We now establish the second main theorem for harmonic morphisms on finite graphs in terms of the Riemann-Hurwitz theorem for finite graphs, from the viewpoint of Nevanlinna theory for algebraic curves in Riemann surfaces. \par

\begin{theorem} Let $G$, $G'$ be finite graphs with genus $g$ and $g'$, respectively, and let $\phi:G\to G'$ be a harmonic morphism. Suppose that $a_1, \ldots, a_q\in V(G')$ be distinct vertices, and let $E=\phi^{-1}(\{a_1, \ldots, a_q\}).$ Then $$(q+g'-1)\deg(\phi)\leq g-1+|E\cap V(G)|-\frac{1}{2}\sum_{v\in V(G)}V_{\phi}(v).$$ where $|E\cap V(G)|$ is the cardinality of the set $E\cap V(G).$
\end{theorem}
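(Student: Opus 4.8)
The plan is to read off the inequality from the Riemann--Hurwitz identity of Theorem~\ref{T3.7}(ii) together with the preimage count of Lemma~\ref{L3.2}. First I would rewrite
$$2g-2=\deg(\phi)(2g'-2)+\sum_{v\in V(G)}\bigl(2(M_{\phi}(v)-1)+V_{\phi}(v)\bigr)$$
in the equivalent form
$$g-1=\deg(\phi)(g'-1)+\sum_{v\in V(G)}(M_{\phi}(v)-1)+\tfrac{1}{2}\sum_{v\in V(G)}V_{\phi}(v),$$
so that the combination $g-1-\tfrac12\sum_{v}V_{\phi}(v)$ occurring on the right-hand side of the asserted inequality is exactly $\deg(\phi)(g'-1)+\sum_{v\in V(G)}(M_{\phi}(v)-1)$.

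The second ingredient is a counting identity. Since $a_1,\dots,a_q$ are distinct vertices of $G'$ and $\phi$ is a function, the sets $\{v\in V(G):\phi(v)=a_i\}$ are pairwise disjoint and their union is $E\cap V(G)$. Applying Lemma~\ref{L3.2} to each $a_i$ and summing gives
$$q\,\deg(\phi)=\sum_{i=1}^{q}\ \sum_{\substack{v\in V(G)\\ \phi(v)=a_i}}M_{\phi}(v)=\sum_{v\in E\cap V(G)}M_{\phi}(v),$$
and hence
$$\sum_{v\in E\cap V(G)}(M_{\phi}(v)-1)=q\,\deg(\phi)-|E\cap V(G)|.$$

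The final step is to compare the partial sum $\sum_{v\in E\cap V(G)}(M_{\phi}(v)-1)$ with the full sum $\sum_{v\in V(G)}(M_{\phi}(v)-1)$. Splitting the full sum as the partial sum plus $\sum_{v\in V(G)\setminus E}(M_{\phi}(v)-1)$ and using that each omitted term is nonnegative (because $M_{\phi}(v)\ge1$ for every vertex $v$), we obtain
$$q\,\deg(\phi)-|E\cap V(G)|\le\sum_{v\in V(G)}(M_{\phi}(v)-1)=g-1-\deg(\phi)(g'-1)-\tfrac12\sum_{v\in V(G)}V_{\phi}(v).$$
Transposing $\deg(\phi)(g'-1)$ to the left and rearranging yields precisely
$$(q+g'-1)\deg(\phi)\le g-1+|E\cap V(G)|-\tfrac12\sum_{v\in V(G)}V_{\phi}(v).$$

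I expect the one delicate point to be the step using $M_{\phi}(v)\ge1$: this is the analogue of a holomorphic map being nonconstant and having positive local degree at each point, and it can fail for a genuinely degenerate harmonic morphism (e.g.\ one with a collapsed pendant tail), so the argument really needs $\phi$ to be nondegenerate --- or, more economically, only needs $\sum_{v\in V(G)\setminus E}(M_{\phi}(v)-1)\ge0$. Everything else is bookkeeping on top of Theorem~\ref{T3.7} and Lemma~\ref{L3.2}, so isolating the precise hypothesis under which the nonnegativity holds is the main thing to get right.
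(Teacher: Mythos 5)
Your argument is essentially identical to the paper's: the authors also combine the Riemann--Hurwitz identity of Theorem~\ref{T3.7}(ii) with Lemma~\ref{L3.2} to show $\sum_{v\in E\cap V(G)}(M_{\phi}(v)-1)=q\deg(\phi)-|E\cap V(G)|$, and then bound this partial sum by the full sum $\sum_{v\in V(G)}(M_{\phi}(v)-1)$. The ``delicate point'' you flag is genuine and worth noting: the paper simply asserts that $r_{\phi}(E)\leq r_{\phi}(G)$ is obvious, whereas this step really does require $M_{\phi}(v)\geq 1$ for every $v\in V(G)\setminus E$, i.e.\ nondegeneracy of $\phi$ (or at least nonnegativity of the omitted terms), a hypothesis not stated in the theorem; so your version makes explicit an assumption the published proof uses silently.
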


\begin{proof}
Set $$r_{\phi}(E):=\sum_{v\in E\cap V(G)}(M_{\phi}(v)-1),$$ and $$r_{\phi}(G):=\sum_{v\in V(G)}(M_{\phi}(v)-1).$$ It is obvious that $$r_{\phi}(E)\leq r_{\phi}(G).$$ From the definitions of the degree of a harmonic morphism $\phi: G\to G'$, the horizontal multiplicity of $\phi$ at $v$ and Lemma \ref{L3.2}, we  get that $$\sum_{v\in E\cap V(G)\atop a_j=\phi(v)}M_{\phi}(v)=\sum_{v\in V(G)\atop a_j=\phi(v)}M_{\phi}(v)=\deg(\phi)$$ holds for each $ a_j\in \{a_1, \ldots, a_q\}.$
Then we have \begin{eqnarray*}r_{\phi}(E)&=&\sum_{v\in E\cap V(G)}(M_{\phi}(v)-1)\\\nonumber&=&(\sum\limits_{j=1}^q \sum\limits_{v\in E\cap V(G)\atop a_j=\phi(v)}M_{\phi}(v))-|E\cap V(G)|\\\nonumber&=&q\deg(\phi)-|E\cap V(G)|.\end{eqnarray*}  On the other hand, by the Riemann-Hurwitz theorem for finite graphs (Theorem \ref{T3.7}), we have
$$r_{\phi}(G)=g-1-(g'-1)\deg(\phi)-\frac{1}{2}\sum_{v\in V(G)}V_{\phi}(v).$$ Hence, we get the following inequality $$(q+g'-1)\deg(\phi)\leq g-1+|E\cap V(G)|-\frac{1}{2}\sum_{v\in V(G)}V_{\phi}(v).$$
\end{proof}

We give some examples to explain the second main theorem for finite graphs. One can refer to the article \cite{BN-2009} for more examples.\par

\begin{example} In the above Figure \ref{fig.2} \cite[Example 3.1]{BN-2009}, with horizontal and vertical multiplicities $M_{\phi}(v)$ and $V_{\phi}(v)$, are shortly written as $m$ and $v$, respectively, labeled next to the corresponding vertices. It is easy to calculate that:\\
$$g=|E(G)|-|V(G)|+1=6,\quad g'=|E(G')|-|V(G')|+1=1,\quad \deg(\phi)=3.$$\par
\begin{itemize}
  \item Take $q=5$ and let $\{a_1, \ldots, a_q\}=\{v_{1}', \ldots, v_{5}'\}=V(G^{'}).$ Then one can get that $|E\cap V(G)|=14$ and $\frac{1}{2}\sum\limits_{v\in V(G)}V_{\phi}(v)=4.$ By the second main theorem we get that $(5+1-1)\times 3\leq 6-1+14-4,$ which in fact is an equality. This means that the inequality of the second main theorem is sharp.

  \item Take $q=4.$ \begin{itemize}
                      \item Let $\{a_1, \ldots, a_q\}=\{v_{1}',v_{3}',v_{4}', v_{5}'\}\subset V(G')$ (or $=\{v_{2}',v_{3}',v_{4}', v_{5}'\}$). Then $|E\cap V(G)|=12$ and $\frac{1}{2}\sum\limits_{v\in V(G)}V_{\phi}(v)=4$, so we have the inequality $12\leq 5+12-4=13$ by the second main theorem.

                      \item Let $\{a_1, \ldots, a_q\}= \{v_{1}',v_{2}',v_{3}', v_{4}'\}\subset V(G')$. Then $|E\cap V(G)|=10$, and $\frac{1}{2}\sum\limits_{v\in V(G)}V_{\phi}(v)=1$, so we get the inequality $12 \leq 5+10-1=14$ by the second main theorem.

                      \item Let $\{a_1, \ldots, a_q\}= \{v_{1}',v_{2}',v_{3}', v_{5}'\}\subset V(G')$. Then $|E\cap V(G)|=11$, and $\frac{1}{2}\sum\limits_{v\in V(G)}V_{\phi}(v)=4$, so we have the equality $12=5+11-4=12$ by the second main theorem.

                      \item Let $\{a_1, \ldots, a_q\}= \{v_{1}',v_{2}',v_{4}', v_{5}'\}\subset V(G')$. Then $|E\cap V(G)|=11$, and $\frac{1}{2}\sum\limits_{v\in V(G)}V_{\phi}(v)=3$, so we get the inequality $12 \leq 5+11-3=13$ by the second main theorem.
                    \end{itemize}
  \item Take $q=3$. \begin{itemize}
                      \item Let $\{a_1, \ldots, a_q\}=\{v_{1}',v_{3}',v_{4}'\}\subset V(G')$ (or $=\{v_{2}',v_{3}',v_{4}'\}$). Then $|E\cap V(G)|=8$, and $\frac{1}{2}\sum\limits_{v\in V(G)}V_{\phi}(v)=1$, so we have the inequality $9 \leq 5+8-1=12$ by the second main theorem.

                      \item Let $\{a_1, \ldots, a_q\}=\{v_{1}',v_{3}',v_{5}'\}\subset V(G')$ (or $=\{v_{2}',v_{3}',v_{5}'\}$). Then$|E\cap V(G)|=9$, and $\frac{1}{2}\sum\limits_{v\in V(G)}V_{\phi}(v)=4$, so we have the inequality $9 \leq 5+9-4=10$ by the second main theorem.

                      \item Let $\{a_1, \ldots, a_q\}=\{v_{1}',v_{4}',v_{5}'\}\subset V(G')$ (or $=\{v_{2}',v_{4}',v_{5}'\}$). Then $|E\cap V(G)|=9$, and $\frac{1}{2}\sum\limits_{v\in V(G)}V_{\phi}(v)=3$, so we have the inequality $9 \leq 5+9-3=11$ by the second main theorem.

                      \item Let $\{a_1, \ldots, a_q\}=\{v_{1}',v_{2}',v_{3}'\}\subset V(G')$. Then $|E\cap V(G)|=7$, and $\frac{1}{2}\sum\limits_{v\in V(G)}V_{\phi}(v)=1$, so we have the inequality $9 \leq 5+7-1=11$ by the second main theorem.

                      \item Let $\{a_1, \ldots, a_q\}=\{v_{1}',v_{2}',v_{4}'\}\subset V(G')$. Then $|E\cap V(G)|=7$, and $\frac{1}{2}\sum\limits_{v\in V(G)}V_{\phi}(v)=0$, so we have the inequality $9 \leq 5+7=12$ by the second main theorem.

                      \item Let $\{a_1, \ldots, a_q\}=\{v_{1}',v_{2}',v_{5}'\}\subset V(G')$. Then$|E\cap V(G)|=8$, and $\frac{1}{2}\sum\limits_{v\in V(G)}V_{\phi}(v)=3$, so we have the inequality $9 \leq 5+8-3=10$ by the second main theorem.

                      \item Let $\{a_1, \ldots, a_q\}=\{v_{3}',v_{4}',v_{5}'\}\subset V(G')$. Then $|E\cap V(G)|=10$, and $\frac{1}{2}\sum\limits_{v\in V(G)}V_{\phi}(v)=4$, so we know have inequality $9 \leq 5+10-4=11$ by the second main theorem.

                    \end{itemize}
  \item  Take $q=2$, \begin{itemize}
                       \item  Let $\{a_1, a_q\}=\{v_{1}',v_{3}'\}\subset V(G')$ (or $=\{v_{2}',v_{3}'\}, \{v_{3}',v_{4}'\}$). Then $|E\cap V(G)|=5$, and $\frac{1}{2}\sum\limits_{v\in V(G)}V_{\phi}(v)=1$, so we have the inequality $6 \leq 5+5-1=9$ by the second main theorem.
                       \item Let $\{a_1, a_q\}=\{v_{1}',v_{4}'\}\subset V(G')$ (or $=\{v_{2}',v_{4}'\}$). Then $|E\cap V(G)|=5$, and $\frac{1}{2}\sum\limits_{v\in V(G)}V_{\phi}(v)=0$, so we have the inequality $6 \leq 5+5=10$ by the second main theorem.
                       \item Let $\{a_1, a_q\}=\{v_{1}',v_{5}'\}\subset V(G')$ (or $=\{v_{2}',v_{5}'\}$). Then $|E\cap V(G)|=6$, and $\frac{1}{2}\sum\limits_{v\in V(G)}V_{\phi}(v)=3$, so we have the inequality $6 \leq 5+6-3=8$ by the second main theorem.
                       \item Let $\{a_1, a_q\}=\{v_{1}',v_{2}'\}\subset V(G')$. Then $|E\cap V(G)|=4$, $\frac{1}{2}\sum\limits_{v\in V(G)}V_{\phi}(v)=0$, so we have the inequality $6 \leq 5+4=9$ by the second main theorem.
                       \item Let $\{a_1, a_q\}=\{v_{3}',v_{5}'\}\subset V(G')$. Then $|E\cap V(G)|=7$, $\frac{1}{2}\sum\limits_{v\in V(G)}V_{\phi}(v)=4$, so we have the inequality $6 \leq 5+7-4=8$ by the second main theorem.
                       \item Let $\{a_1, a_q\}=\{v_{4}',v_{5}'\}\subset V(G')$. Then $|E\cap V(G)|=7$, $\frac{1}{2}\sum\limits_{v\in V(G)}V_{\phi}(v)=3$, so we have the inequality $6 \leq 5+7-3=9$ by the second main theorem.
                     \end{itemize}
  \item Take $q=1$, \begin{itemize}
                      \item Let $a_q\in V(G')$ take $v_{1}'$, or, $v_{2}'$. Then $|E\cap V(G)|=2$, $\frac{1}{2}\sum\limits_{v\in V(G)}V_{\phi}(v)=0$, so we have the inequality $3 \leq 5+2=7$ by the second main theorem.
                      \item Let $a_q\in V(G')$ take $v_{3}'$. Then $|E\cap V(G)|=3$, and $\frac{1}{2}\sum\limits_{v\in V(G)}V_{\phi}(v)=1$, so we have the inequality $3 \leq 5+3-1=7$ by the second main theorem.
                      \item Let $a_q\in V(G')$ take $v_{4}'$. Then $|E\cap V(G)|=3$, and $\frac{1}{2}\sum\limits_{v\in V(G)}V_{\phi}(v)=0$, so we have the inequality $3 \leq 5+3=8$ by the second main theorem.
                      \item Let $a_q\in V(G')$ take $v_{5}'$. Then $|E\cap V(G)|=4$, and $\frac{1}{2}\sum\limits_{v\in V(G)}V_{\phi}(v)=3$, so we have the inequality $3 \leq 5+4-3=6$ by the second main theorem.
                    \end{itemize}
\end{itemize}
\end{example}

\begin{example} [collapsing] Let $p\in V(G)$ be a cut vertex, so that $G$ can be partitioned into two subsets $G_1$ and $G_2$, which intersect only at $p$, the collapsing of $G$ relative to $G_1$ is the graph $G'$ obtained by contracting all vertices and edges in $G_1$ to $\{p\}$. Let $ \phi:G\to G'$ be the morphism that sends $G_1$ to $p$ and is the identity on $G_2$, if $|V(G_2)|>1$, then $\phi$ is a harmonic morphism.  See Figure \ref{fig.3}, the horizontal multiplicities $M_{\phi}(v)$ and vertical multiplicities $V_{\phi}(v)$, are written as $m$ and $v$, respectively, labeled next to the corresponding vertices. It is known by calculation that:\\
$g=|E(G)|-|V(G)|+1=0,$ and $g'=|E(G')|-|V(G')|+1=0,$ $\deg(\phi)=1.$\par
\begin{itemize}
  \item  Take $q=5$ and let $\{a_1, \ldots, a_q\}=\{v_{1}', \ldots, v_{5}'\}=V(G').$ Then we can get that $|E\cap V(G)|=6$ and $\frac{1}{2}\sum\limits_{v\in V(G)}V_{\phi}(v)=1.$ By the second main theorem we get that $(5+0-1)\times 1\leq 0-1+6-1.$ This means that the inequality of the second main theorem is sharp.
  \item Take $q=4$, \begin{itemize}
                      \item Let $\{a_1, \ldots, a_q\}=\{v_{1}',v_{3}',v_{4}', v_{5}'\}\subset V(G')$ (or can take $\{v_{2}',v_{3}',v_{4}', v_{5}'\}$, $\{v_{1}',v_{2}',v_{3}', v_{4}'\},$ $\{v_{1}',v_{2}',v_{3}', v_{5}'\}$). Then $|E\cap V(G)|=5$, $\frac{1}{2}\sum\limits_{v\in V(G)}V_{\phi}(v)=1$, so we have the equality  $3=-1+5-1=3$ by the second main theorem.
                      \item Let $\{a_1, \ldots, a_q\}=\{v_{1}',v_{2}',v_{4}', v_{5}'\}\subset V(G')$. Then $|E\cap V(G)|=4$ and $\frac{1}{2}\sum\limits_{v\in V(G)}V_{\phi}(v)=0$, so we have the equality  $3=-1+4=3$ by the second main theorem.
                    \end{itemize}
  \item Take $q=3$,\begin{itemize}
                     \item Let $\{a_1, \ldots, a_q\}=\{v_{1}',v_{3}',v_{4}'\}\subset V(G')$ (or $=\{v_{2}',v_{3}',v_{4}'\}$, $\{v_{1}',v_{2}',v_{3}' \},$ $\{v_{1}',v_{3}', v_{5}'\}$, $\{v_{2}',v_{3}',v_{5}'\}$, $\{v_{3}',v_{4}',v_{5}' \},$). Then $|E\cap V(G)|=4$ and $\frac{1}{2}\sum\limits_{v\in V(G)}V_{\phi}(v)=1$, so we have the equality  $2=-1+4-1=2$ by the second main theorem.
                     \item Let $\{a_1, \ldots, a_q\}=\{v_{1}',v_{4}',v_{5}'\}\subset V(G')$ (or $=\{v_{2}',v_{4}',v_{5}'\}$, $\{v_{1}',v_{2}',v_{4}' \},$ $\{v_{1}',v_{2}', v_{5}'\}$). Then $|E\cap V(G)|=3$ and $\frac{1}{2}\sum\limits_{v\in V(G)}V_{\phi}(v)=0$, so we have the equality  $2=-1+3=2$ by the second main theorem.
                   \end{itemize}
  \item Take $q=2$,\begin{itemize}
                     \item Let $\{a_1, a_q\}=\{v_{1}',v_{3}'\}\subset V(G')$ (or $=\{v_{2}',v_{3}'\}$, $\{v_{3}',v_{4}'\}$,  $\{v_{3}',v_{5}'\}$). Then
                      $|E\cap V(G)|=3$, and $\frac{1}{2}\sum\limits_{v\in V(G)}V_{\phi}(v)=1$, so we have the equality  $1=-1+3-1=1$ by the second main theorem.
                     \item Let $\{a_1, a_q\}=\{v_{1}',v_{2}'\}\subset V(G')$ (or $=\{v_{1}',v_{4}'\}$, $\{v_{1}',v_{5}'\}$,  $\{v_{2}',v_{4}'\}$,  $\{v_{2}',v_{5}'\}$,  $\{v_{4}',v_{5}'\}$). Then $|E\cap V(G)|=2$, and $\frac{1}{2}\sum\limits_{v\in V(G)}V_{\phi}(v)=0$, so we have the equality  $1=-1+2=1$ by the second main theorem.
                   \end{itemize}
  \item Take $q=1$, \begin{itemize}
                      \item Let $a_q\in V(G')$ take $v_{1}'$, or, $v_{2}'$, or, $v_{1}'$, or, $v_{2}'$, then $|E\cap V(G)|=1$, and $\frac{1}{2}\sum\limits_{v\in V(G)}V_{\phi}(v)=0$, so we have the equality $0=-1+1=0$  by the second main theorem.
                      \item Let $a_q\in V(G')$ take $v_{3}'$, then $|E\cap V(G)|=2$, and $\frac{1}{2}\sum\limits_{v\in V(G)}V_{\phi}(v)=1$, so we have the equality $0=-1+2-1=0$ by the second main theorem.
                    \end{itemize}
\end{itemize}
\end{example}
\begin{figure}
  \centering
  \includegraphics[width=8cm]{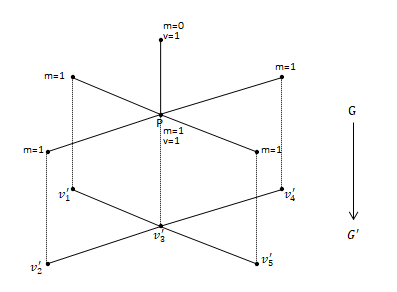}\\
  \caption{A harmonic morphism $\phi:G\to G'$ of degree one }\label{fig.3}
\end{figure}

\section{Second main theorem for vertex-weighted graphs}

\subsection{Pseudo-harmonic indexed morphism between vertex-weighted graphs}\par
Recall that in the subsection 2.2, for a vertex-weighted graph $(G, w),$ we can associate to it a \emph{weightless graph} $G^w,$ which is obtained by attaching at every vertex $v\in V(G),$ $w(v)$ loops (or ``$1$-cycles"). Conversely, when we deal with a vertex-weighted  graph with loops, we can add the weighted number corresponding to the number of loops at that point. So we will only suppose that all vertex-weighted graphs are loopless in this section.\par

Let $\phi: G\to G'$ be a morphism, and denote by ${\phi}_{V}: V(G)\to V(G')$ the map induced by $\phi$ on the vertices. A morphism between vertex-weighted  graphs $(G,w)$ and $(G',w')$ is defined as a morphism of the underlying graphs. The following definitions and lemmas are proposed in \cite{Cap-2014}, extending the ones in Section 3 (\cite{BN-2009}).\par

\begin{definition} Let $(G,w)$ and $(G',w')$ be loopless vertex-weighted graphs.\par
(i) A indexed morphism is a morphism $\phi: (G,w)\to (G',w')$ enriched by the assignment, for every $e\in E(G)$, assign a non-negative integer, the index of $\phi$ at $e$, written $r_{\phi}(e)$, such that  $r_{\phi}(e)=0$ if and only if $\phi(e)$ is a point. If for every $e\in E(G),$ $r_{\phi}(e)\leq 1$, then the indexed morphism is simple.\par

(ii) A indexed morphism is pseudo-harmonic if for  every $v\in V(G)$ there exists a number, $M_{\phi}(v)$, such that for every $e'\in E(G')$ we have $$M_{\phi}(v)=\sum_{e\in E(G): \phi(e)=e'}r_{\phi}(e).$$\par

(iii)  A pseudo-harmonic indexed morphism is non-degenerate if $M_{\phi}(v)\geq 1$, for every $v\in V(G).$\par
(iv) A pseudo-harmonic indexed morphism is harmonic if for every $v\in V(G)$ we have $$ \sum_{e\in E(G)}(r_{\phi}(e)-1)\leq 2(M_{\phi}(v)-1+w(v)-M_{\phi}(v)w'(v')).$$  where $v'=\phi (v)$.
\end{definition}

\begin{remark} For simple morphisms of weightless graphs, the above definition of harmonic morphism coincides with the one given in Section 3 (\cite{BN-2009}) for morphisms which contract no leaves.\end{remark}

\begin{definition}
 If $\phi: (G,w)\to (G',w')$ be a pseudo-harmonic indexed morphism. Then for every $e'\in E(G')$, the degree of $\phi$ as follows $$\deg\phi=\sum_{e\in E(G):\phi(e)=e'}r_{\phi}(e).$$
\end{definition}

\begin{lemma} \cite[Lemma-Definition 2.4]{Cap-2014} \label{L4.4} Let $\phi: (G,w)\to (G',w')$ be a pseudo-harmonic indexed morphism. For any vertex $v'\in V(G')$, we have $$\deg(\phi)=\sum_{v\in V(G)\atop \phi(v)=v'}M_{\phi}(v).$$  \end{lemma}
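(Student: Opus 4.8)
The plan is to imitate the proof of Lemma~\ref{L3.2} of Baker--Norine, replacing ``number of edges'' everywhere by ``sum of the indices $r_\phi(e)$''. The geometric input that makes everything work is the looplessness of $G'$ (which is the standing hypothesis of this section): if $e\in E(G)$ has $\phi(e)=e'\in E(G')$ and $v'$ is one of the two endpoints of $e'$, then, since $e'$ is not a loop, its two endpoints are distinct, and since a morphism sends the two endpoints of $e$ onto the two endpoints of $e'$, exactly one endpoint of $e$ lies in $\phi^{-1}(v')$. This yields a clean partition of the set of edges lying over $e'$ according to which preimage of $v'$ they touch.

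First I would dispose of the degenerate case: if $E(G')=\emptyset$ (equivalently $|V(G')|=1$, since $G'$ is connected and loopless) the degree is set to $0$ and the identity is vacuous, so assume $E(G')\neq\emptyset$; then every vertex of $G'$, in particular the fixed $v'$, is incident to at least one edge. Choose such an edge $e'$ with $v'\in e'$. By the definition of $\deg(\phi)$ we have $\deg(\phi)=\sum_{e\in E(G):\,\phi(e)=e'}r_\phi(e)$. Partitioning this sum using the dichotomy above gives
\[
\deg(\phi)=\sum_{e:\,\phi(e)=e'}r_\phi(e)=\sum_{\substack{v\in V(G)\\ \phi(v)=v'}}\ \sum_{\substack{e\ni v\\ \phi(e)=e'}}r_\phi(e).
\]
By pseudo-harmonicity, for each $v$ with $\phi(v)=v'$ the inner sum is exactly $M_\phi(v)$ (this is the defining property of $M_\phi(v)$ applied to the edge $e'$, which is incident to $\phi(v)=v'$); for a preimage $v$ of $v'$ that touches no edge over $e'$ the inner sum is $0$, again equal to $M_\phi(v)$. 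Summing over all such $v$ yields $\deg(\phi)=\sum_{v\in V(G),\,\phi(v)=v'}M_\phi(v)$, which is the assertion, and at the same time shows that $\sum_{e:\,\phi(e)=e'}r_\phi(e)$ does not depend on the choice of $e'$ among edges incident to $v'$; since $G'$ is connected, propagating this equality along paths recovers the full well-definedness of $\deg(\phi)$ promised by its definition.

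I do not expect a serious obstacle here: the entire content is the bijective bookkeeping of the middle paragraph. The two points that need care are to invoke the looplessness of $G'$ when claiming that each edge over $e'$ meets exactly one preimage of $v'$, and to note that vertices $v\in\phi^{-1}(v')$ with $M_\phi(v)=0$ contribute nothing, so that enlarging the outer sum from ``preimages of $v'$ touching $e'$'' to ``all preimages of $v'$'' does not change its value.
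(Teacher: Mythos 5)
Your proof is correct. The paper does not prove this lemma itself---it is quoted from Caporaso \cite[Lemma-Definition 2.4]{Cap-2014}---but your argument is exactly the one the paper does write out for the metric-graph analogue (Proposition \ref{pro5.3}): fix an edge $e'\ni v'$, expand $\deg(\phi)$ as the sum of indices over edges above $e'$, and regroup that sum by the unique endpoint in $\phi^{-1}(v')$ of each such edge, using looplessness of $G'$ and pseudo-harmonicity to identify each group with $M_{\phi}(v)$. Your extra care about the degenerate case $E(G')=\emptyset$ and about preimages of $v'$ meeting no edge above $e'$ is sound and slightly more thorough than the paper's treatment of the analogous statements.
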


\begin{definition}\cite{Cap-2014}
Let $\phi: (G,w)\to (G',w')$ be a pseudo-harmonic indexed morphism, the pull-back homomorphism $\phi^*: Div(G',w')\to Div(G,w)$ as follows: for every $v'\in V(G')$, $$\phi^*(D')=\sum_{v \in {\phi}^{-1}(v')}M_{\phi}(v)D'(\phi(v))(v).$$
\end{definition}

\begin{lemma} \cite{Cap-2014}\label{L4.6} If $\phi: (G,w)\to (G',w')$ be a pseudo-harmonic indexed morphism, then $$\deg(\phi^*(D'))=\deg(\phi)\deg(D').$$ \end{lemma}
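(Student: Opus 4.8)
The plan is to compute $\deg(\phi^{*}(D'))$ directly from the definition of the pull-back homomorphism $\phi^{*}\colon Div(G',w')\to Div(G,w)$ and then reorganize the resulting finite sum by grouping the vertices of $G$ according to their image in $G'$, at which point Lemma \ref{L4.4} closes the argument.

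First I would recall that, by the definition of $\phi^{*}$, one has $\phi^{*}(D')=\sum_{v\in V(G)}M_{\phi}(v)D'(\phi(v))(v)$, so that applying $\deg$ gives
\[
\deg(\phi^{*}(D'))=\sum_{v\in V(G)}M_{\phi}(v)D'(\phi(v)).
\]
Next I would partition $V(G)$ into the fibres $\phi^{-1}(v')$ for $v'\in V(G')$ (legitimate since $\phi$ restricts to a well-defined map $\phi_{V}\colon V(G)\to V(G')$), and rewrite the sum as
\[
\deg(\phi^{*}(D'))=\sum_{v'\in V(G')}\Bigl(\sum_{v\in V(G),\,\phi(v)=v'}M_{\phi}(v)\Bigr)D'(v').
\]
Then I would invoke Lemma \ref{L4.4}, which asserts that $\sum_{v\in V(G),\,\phi(v)=v'}M_{\phi}(v)=\deg(\phi)$ for \emph{every} $v'\in V(G')$; substituting this identity yields
\[
\deg(\phi^{*}(D'))=\deg(\phi)\sum_{v'\in V(G')}D'(v')=\deg(\phi)\deg(D'),
\]
which is the claimed equality.

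The argument is essentially a bookkeeping exercise, so I do not expect a serious obstacle: the only substantive ingredient is Lemma \ref{L4.4}, which guarantees that the horizontal multiplicities in each fibre sum to $\deg(\phi)$ independently of the chosen $v'$, and hence can be pulled out of the inner sum. The remaining points to check are purely formal — that the fibres $\phi^{-1}(v')$ partition $V(G)$, and that $D'$ has finite support so that all the sums involved are finite and the interchange of summation is valid.
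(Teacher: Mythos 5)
Your proof is correct and follows exactly the standard argument: expand $\phi^{*}(D')$ from its definition, regroup the sum over $V(G)$ by the fibres $\phi^{-1}(v')$, and apply Lemma \ref{L4.4} to each fibre. The paper itself states this lemma without proof (citing \cite{Cap-2014}), but it proves the analogous statements for metric graphs (Lemma \ref{lem5.5}) and vertex-weighted metric graphs (Lemma \ref{lem6.9}) by precisely this fibre-decomposition argument, so your route is the same as the paper's.
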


\subsection{Riemann-Hurwitz theorem on vertex-weighted graphs}\par
In 2014, Caporaso extended the Riemann-Hurwitz theorem to the case of vertex-weighted graphs.\par

\begin{theorem}\cite[Proposition 2.5]{Cap-2014} \label{T4.7} Let $(G,w)$, $(G',w')$ be loopless vertex-weighted graphs, and let $\phi:(G,w)\to (G',w')$ be a pseudo-harmonic indexed morphism of vertex-weighted graphs of genus $g$ and $g'$ respectively. Then $$K_{(G,w)}=\phi^*K_{(G',w')}+R_{\phi},$$ where and \begin{eqnarray*}
R_{\phi}=\sum_{v\in V(G)}2(M_{\phi}(v)-1+w(v)-M_{\phi}(v)w'(v'))(v)-\sum\limits_{v\in V(G)\atop e\in  {E(G)}}(r_{\phi}(e)-1)(v),
\end{eqnarray*} and $v'=\phi(v).$  Furthermore, $\phi$ is harmonic if and only if $R_{\phi}\geq 0.$
\end{theorem}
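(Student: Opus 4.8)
The plan is to prove the divisor identity by comparing, vertex by vertex, the coefficient of each $(v)$ on both sides, and then to read off the harmonicity criterion directly from the sign of that coefficient. First I would write out the two canonical divisors explicitly from Subsection 2.2, namely $K_{(G,w)}=\sum_{v\in V(G)}(val(v)-2+2w(v))(v)$ and the analogous expression for $(G',w')$. Applying the pull-back formula of Definition 4.5 gives
$$\phi^*K_{(G',w')}=\sum_{v\in V(G)}M_{\phi}(v)\bigl(val(v')-2+2w'(v')\bigr)(v),\qquad v'=\phi(v),$$
so the coefficient of $(v)$ in $K_{(G,w)}-\phi^*K_{(G',w')}$ is $(val(v)-2+2w(v))-M_{\phi}(v)(val(v')-2+2w'(v'))$.

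Next I would compare this with the coefficient of $(v)$ in $R_{\phi}$, which is $2(M_{\phi}(v)-1+w(v)-M_{\phi}(v)w'(v'))-\sum_{e\in E(G):\,v\in e}(r_{\phi}(e)-1)$. After cancelling the common block $-2+2w(v)+2M_{\phi}(v)-2M_{\phi}(v)w'(v')$ appearing in both expressions, matching the two reduces to the single local identity
$$\sum_{e\in E(G):\,v\in e}r_{\phi}(e)=M_{\phi}(v)\,val(v'),$$
where I have used that $G$ is loopless, so that the number of edges incident to $v$ equals $val(v)$ and hence $\sum_{e\ni v}(r_{\phi}(e)-1)=\sum_{e\ni v}r_{\phi}(e)-val(v)$.

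The key step is to establish this local identity from pseudo-harmonicity. For a fixed $v$ and each edge $e'\in E(G')$ incident to $v'$, Definition 4.1(ii) gives $M_{\phi}(v)=\sum_{e\ni v:\,\phi(e)=e'}r_{\phi}(e)$. Summing over the $val(v')$ edges $e'$ incident to $v'$, the left side yields $val(v')M_{\phi}(v)$, while the right side collects $\sum_{e\ni v,\,\phi(e)\in E(G')}r_{\phi}(e)$; since every vertical edge (one with $\phi(e)=v'$) carries $r_{\phi}(e)=0$ by Definition 4.1(i), this equals $\sum_{e\ni v}r_{\phi}(e)$, which proves the local identity and hence the divisor formula $K_{(G,w)}=\phi^*K_{(G',w')}+R_{\phi}$.

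Finally, for the equivalence I would simply observe that $R_{\phi}\geq 0$ means $R_{\phi}(v)\geq 0$ for every $v$, that is $\sum_{e\ni v}(r_{\phi}(e)-1)\leq 2(M_{\phi}(v)-1+w(v)-M_{\phi}(v)w'(v'))$ for all $v$, which is verbatim the harmonicity condition of Definition 4.1(iv). The main obstacle is the bookkeeping in the local identity: one must interpret the double-index sum $\sum_{v\in V(G),\,e\in E(G)}(r_{\phi}(e)-1)(v)$ as the divisor whose coefficient at $v$ is $\sum_{e\ni v}(r_{\phi}(e)-1)$, and must read the pseudo-harmonicity relation as a sum over edges incident to $v$ mapping to a fixed $e'\ni v'$. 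The looplessness hypothesis is exactly what guarantees that the count of edges at $v$ equals $val(v)$, which is what makes the cancellation work cleanly.
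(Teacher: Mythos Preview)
Your proof is correct. Note, however, that the paper does not actually give its own proof of Theorem~4.7: it is quoted directly from \cite[Proposition~2.5]{Cap-2014}, so there is no in-paper argument to compare against. That said, your vertex-by-vertex computation---expanding $K_{(G,w)}(v)-\phi^*K_{(G',w')}(v)$ and reducing the identity to $\sum_{e\ni v}r_{\phi}(e)=M_{\phi}(v)\,val(v')$ via the pseudo-harmonicity condition summed over edges $e'\ni v'$---is exactly the method the paper uses to prove the parallel results Theorem~5.6 and Theorem~6.10 (where $r_{\phi}(e)$ is replaced by $U_{\phi}(e)$), so your approach is fully in line with the paper's own techniques for the metric and vertex-weighted metric analogues.
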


For getting the second main theorem on vertex-weighted graphs, we here need supplement the Riemann-Hurwitz theorem, corresponding to the conclusion (ii) of Theorem \ref{T3.7}.\par

\begin{theorem} \label{T4.8} If $(G,w)$, $(G',w')$ have genus $g$ and $g'$, respectively, then

\begin{eqnarray*}
2g-2 &=& \deg(\phi)(2g'-2)+\sum_{v\in V(G)}2(M_{\phi}(v)-1+w(v)-M_{\phi}(v)w'(v'))\nonumber\\
&\;&- \sum\limits_{v\in V(G)}\sum\limits_{e\in  {E_v(G)}}(r_{\phi}(e)-1)\\&=& \deg(\phi)(2g'-2)+\sum_{v\in V(G)}2(M_{\phi}(v)-1+w(v)-M_{\phi}(v)w'(v'))\nonumber\\
&\;&+\sum_{v\in V(G)}(val(v)-M_{\phi}(v)val(v')),
\end{eqnarray*} where $v'=\phi(v).$
\end{theorem}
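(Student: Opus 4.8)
The plan is to deduce Theorem \ref{T4.8} directly from the Riemann-Hurwitz identity in Theorem \ref{T4.7} by taking degrees of both sides. Since $K_{(G,w)}=\phi^*K_{(G',w')}+R_{\phi}$ holds in $Div(G,w)$, applying the degree homomorphism gives $\deg K_{(G,w)}=\deg\phi^*K_{(G',w')}+\deg R_{\phi}$. I would then invoke the three facts already available in the excerpt: first, $\deg K_{(G,w)}=2g(G,w)-2=2g-2$ from the discussion following Definition 2.4; second, $\deg(\phi^*K_{(G',w')})=\deg(\phi)\deg(K_{(G',w')})=\deg(\phi)(2g'-2)$ by Lemma \ref{L4.6} applied to $D'=K_{(G',w')}$; and third, the explicit formula for $R_{\phi}$ in Theorem \ref{T4.7}, whose degree is $\sum_{v\in V(G)}2(M_{\phi}(v)-1+w(v)-M_{\phi}(v)w'(v'))-\sum_{v\in V(G)}\sum_{e\in E_v(G)}(r_{\phi}(e)-1)$. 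Substituting these three evaluations into the degree identity yields the first displayed equality.

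For the second displayed equality I would show that $-\sum_{v\in V(G)}\sum_{e\in E_v(G)}(r_{\phi}(e)-1)=\sum_{v\in V(G)}(val(v)-M_{\phi}(v)val(v'))$. The key is to expand the left side termwise over each vertex $v$: the inner sum is $\sum_{e\in E_v(G)}r_{\phi}(e)-\sum_{e\in E_v(G)}1=\sum_{e\in E_v(G)}r_{\phi}(e)-val(v)$, where $E_v(G)$ denotes the edges incident to $v$. Now I need to recognize $\sum_{e\in E_v(G)}r_{\phi}(e)$ as $M_{\phi}(v)val(v')$: the edges incident to $v$ partition according to which edge $e'\in E_{v'}(G')$ they map to (vertical edges mapping to a point contribute $r_{\phi}(e)=0$), and for each of the $val(v')$ edges $e'$ incident to $v'=\phi(v)$ the pseudo-harmonicity condition $M_{\phi}(v)=\sum_{e\in E(G):\phi(e)=e'}r_{\phi}(e)$ (restricted to edges at $v$, since only those can map to $e'$) gives a contribution of $M_{\phi}(v)$; summing over the $val(v')$ choices of $e'$ gives $M_{\phi}(v)val(v')$. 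Hence $-\sum_{e\in E_v(G)}(r_{\phi}(e)-1)=val(v)-M_{\phi}(v)val(v')$, and summing over $v$ completes the identification.

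I expect the main subtlety — not really an obstacle, but the point requiring care — to be the bookkeeping in the step $\sum_{e\in E_v(G)}r_{\phi}(e)=M_{\phi}(v)val(v')$, specifically making sure loop edges and vertical edges are accounted for correctly. Since the section assumes all vertex-weighted graphs are loopless, loop edges are not an issue; for vertical edges $e$ (those with $\phi(e)$ a point), one has $r_{\phi}(e)=0$ by definition, so they drop out of $\sum_{e\in E_v(G)}r_{\phi}(e)$ automatically, and the pseudo-harmonic identity applied to each horizontal direction $e'$ at $v'$ exactly recovers $M_{\phi}(v)$. One should also note the degenerate case $|V(G')|=1$, where there are no edges $e'$ and $M_{\phi}(v)=0$; then both sides reduce to $-\sum_{e}(0-1)=val(v)=val(v)-0$, consistent with the convention $val(v')$ being irrelevant. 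With these checks in place the two equalities follow, and I would present the argument as a short computation: state the degree identity, substitute the three known evaluations, then perform the termwise rewriting of the ramification sum.
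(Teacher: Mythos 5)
Your proposal is correct and follows essentially the same route as the paper: take degrees in the Riemann--Hurwitz identity of Theorem \ref{T4.7}, apply Lemma \ref{L4.6} to $K_{(G',w')}$, and rewrite the ramification sum via $\sum_{e\in E_v(G)}(r_{\phi}(e)-1)=M_{\phi}(v)val(v')-val(v)$, which is exactly the identity the paper invokes. Your extra bookkeeping for vertical edges and the degenerate case $|V(G')|=1$ is a correct elaboration of what the paper leaves implicit.
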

\begin{proof} One can easily get the conclusion by Lemma \ref{L4.6}, Theorem \ref{T4.7} and the fact
$$ \sum\limits_{e\in  {E_v(G)}}(r_{\phi}(e)-1)= \sum\limits_{e\in  {E_v(G)}}r_{\phi}(e)- val(v)=  M_{\phi}(v)val(v')-val(v).$$
\end{proof}

\subsection{Second main theorem on vertex-weighted graphs}\par
Now we show the second main theorem on vertex-weighted graphs as follows.\par

\begin{theorem} Let $\phi: (G,w)\to (G',w')$ be a loopless pseudo-harmonic indexed morphism of vertex-weighted graphs of genus $g$ and $g'$, respectively. Suppose that $\{a_1, \ldots, a_q\}\subset V(G', w')$ are distinct vertices, and let $E=\phi^{-1}(\{a_1, \ldots, a_q\}).$ Then

\begin{eqnarray*}
(q+g'-1)\deg(\phi) &\leq& g-1+|E\cap V(G)|-\sum_{v\in V(G)}(w(v)-M_{\phi}(v)w'(v'))\nonumber\\
&\;&+\frac{1}{2}\sum\limits_{v\in V(G)}\sum\limits_{e\in  {E_v(G)}}(r_{\phi}(e)-1),
\end{eqnarray*}
where $|E\cap V(G)|$ is the cardinality of $E\cap V(G)$.
\end{theorem}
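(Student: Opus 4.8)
The plan is to mimic exactly the proof of the second main theorem on finite graphs, replacing the use of Theorem~\ref{T3.7} with the vertex-weighted Riemann-Hurwitz theorem (Theorem~\ref{T4.7}, in the form of Theorem~\ref{T4.8}) and replacing the ``vertical'' quantity $\tfrac12\sum V_\phi(v)$ by $-\tfrac12\sum_{v}\sum_{e\in E_v(G)}(r_\phi(e)-1)$. First I would introduce the two ramification-type quantities
\begin{equation*}
r_\phi(E):=\sum_{v\in E\cap V(G)}(M_\phi(v)-1),\qquad r_\phi(G):=\sum_{v\in V(G)}(M_\phi(v)-1),
\end{equation*}
and note the trivial bound $r_\phi(E)\le r_\phi(G)$ coming from $M_\phi(v)\ge 1$ (nondegeneracy) and $E\cap V(G)\subseteq V(G)$.

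Next I would compute $r_\phi(E)$ exactly. Using Lemma~\ref{L4.4}, for each fixed $a_j\in\{a_1,\dots,a_q\}$ we have $\sum_{v\in V(G),\,\phi(v)=a_j}M_\phi(v)=\deg(\phi)$, and since every $v$ with $\phi(v)=a_j$ lies in $E\cap V(G)$ this also equals $\sum_{v\in E\cap V(G),\,\phi(v)=a_j}M_\phi(v)$. Summing over $j=1,\dots,q$ and subtracting $|E\cap V(G)|$ (the number of terms) gives
\begin{equation*}
r_\phi(E)=q\deg(\phi)-|E\cap V(G)|.
\end{equation*}
Then I would compute $r_\phi(G)$ from Theorem~\ref{T4.8}: rearranging the genus formula $2g-2=\deg(\phi)(2g'-2)+\sum_v 2(M_\phi(v)-1+w(v)-M_\phi(v)w'(v'))-\sum_v\sum_{e\in E_v(G)}(r_\phi(e)-1)$ yields
\begin{equation*}
r_\phi(G)=g-1-(g'-1)\deg(\phi)-\sum_{v\in V(G)}\bigl(w(v)-M_\phi(v)w'(v')\bigr)+\frac12\sum_{v\in V(G)}\sum_{e\in E_v(G)}(r_\phi(e)-1).
\end{equation*}
Substituting both expressions into $r_\phi(E)\le r_\phi(G)$ and moving $q\deg(\phi)$ and $(g'-1)\deg(\phi)$ to the left gives precisely the claimed inequality.

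There is essentially no hard analytic step here; the only thing to be careful about is bookkeeping. In particular one must make sure the nondegeneracy hypothesis $M_\phi(v)\ge 1$ is in force so that $r_\phi(E)\le r_\phi(G)$ holds — if some $M_\phi(v)=0$ for $v\notin E\cap V(G)$ the inequality could fail, so I would either assume nondegeneracy (as in the finite-graph case, where $\deg(\phi)>0$ forces surjectivity but not $M_\phi\ge1$ at collapsed leaves) or remark that the terms with $M_\phi(v)=0$ contribute $-1$ to $r_\phi(G)$ and handle them accordingly. The other mild subtlety is confirming that for $v$ with $\phi(v)=a_j$ one indeed has $v\in E$, which is immediate from $E=\phi^{-1}(\{a_1,\dots,a_q\})$ and $a_j\in V(G')$. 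I expect the genus-formula substitution to be the one place where a sign error is easiest to make, so I would double-check it against the finite-graph specialization ($w\equiv 0$, $r_\phi(e)\le 1$, $\sum_{e\in E_v(G)}(r_\phi(e)-1)=-V_\phi(v)$), which recovers exactly the finite-graph second main theorem already proved above.
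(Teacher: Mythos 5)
Your proposal matches the paper's own proof essentially step for step: the same quantities $r_{\phi}(E)$ and $r_{\phi}(G,w)$, the same use of Lemma \ref{L4.4} to obtain $r_{\phi}(E)=q\deg(\phi)-|E\cap V(G)|$, and the same rearrangement of Theorem \ref{T4.8} for $r_{\phi}(G,w)$. Your observation that the bound $r_{\phi}(E)\leq r_{\phi}(G,w)$ silently requires $M_{\phi}(v)\geq 1$ for $v\notin E\cap V(G)$ is a point the paper itself passes over with ``it is obvious,'' so if anything you have been more careful than the source.
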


\begin{proof}
Set $$r_{\phi}(E):=\sum_{v\in E\cap V(G)}(M_{\phi}(v)-1),$$ and $$r_{\phi}(G, w):=\sum_{v\in V(G)}(M_{\phi}(v)-1).$$ It is obvious that $$r_{\phi}(E)\leq r_{\phi}(G,w).$$ From the definitions of the degree of a pseudo-harmonic indexed morphism $\phi:(G,w)\to (G',w')$, and Lemma \ref{L4.4}, we  get that $$\sum_{v\in E\cap V(G)\atop a_j=\phi(v)}M_{\phi}(v)=\sum_{v\in V(G)\atop a_j=\phi(v)}M_{\phi}(v)=\deg(\phi)$$ holds for each $ a_j\in \{a_1, \ldots, a_q\}.$
Then we have \begin{eqnarray*}r_{\phi}(E)&=&\sum_{v\in E\cap V(G)}(M_{\phi}(v)-1)\\\nonumber&=&(\sum\limits_{j=1}^q \sum\limits_{v\in E\cap V(G)\atop a_j=\phi(v)}M_{\phi}(v))-|E\cap V(G)|\\\nonumber&=&q\deg(\phi)-|E\cap V(G)|.\end{eqnarray*}  On the other hand, by the Riemann-Hurwitz theorem for vertex-weighted graphs (Theorem \ref{T4.8}), we have

\begin{eqnarray*}
 r_{\phi}(G,w) &=& g-1-(g'-1)\deg(\phi)-\sum_{v\in V(G)}(w(v)-M_{\phi}(v)w'(v'))\nonumber\\
&\;&+\frac{1}{2}\sum\limits_{v\in V(G)}\sum\limits_{e\in  {E_v(G)}}(r_{\phi}(e)-1).
\end{eqnarray*}
Hence, from the above inequality, this proves the second main theorem.
\end{proof}

We give some examples that satisfies the second main theorem for vertex-weighted graphs.\par

\begin{example} In the following Figure \ref{fig.4} \cite[Example 2.18]{Cap-2014},  the pseudo-harmonic indexed morphism $\phi: G\to G',$ in which one index of the edge joining $v_2$ and $v_3$ is $2$, and all other indexes of edges are $1.$ Assume that all weights are zero. Then by calculation we have:\\
$g=|E(G)|-|V(G)|+1=5,$ $g'=|E(G')|-|V(G')|+1=0,$ $\deg(\phi)=3,$ and the $\sum_{v\in V(G)}(w(v)-M_{\phi}(v)w'(v'))=0$ for each vertex.\par
\begin{itemize}
  \item Take $q=4$, \begin{itemize}
                      \item let $\{a_1, \ldots, a_q\}=\{v_{1}', \ldots, v_{4}'\}=V(G').$ Then $|E\cap V(G)|=4$ and $\frac{1}{2}\sum\limits_{v\in V(G)}\sum\limits_{e\in  {E_v(G)}}(r_{\phi}(e)-1)=1.$ By the second main theorem we get that $(4+0-1)\times 3\leq 5-1+4+1,$ which in fact is an equality. This means that the inequality of the second main theorem is sharp.
                    \end{itemize}
  \item Take $q=3$, \begin{itemize}
                     \item  Let $\{a_1, \ldots, a_q\}=\{v_{1}',v_{2}',v_{3}'\}\subset V(G')$ (or $=\{v_{2}',v_{3}',v_{4}'\}$). Then $|E\cap V(G)|=3$, and $\frac{1}{2}\sum\limits_{v\in V(G)}\sum\limits_{e\in  {E_v(G)}}(r_{\phi}(e)-1)=1$, so we have the inequality $6 \leq 4+3+1=8$ by the second main theorem.
                     \item Let $\{a_1, \ldots, a_q\}=\{v_{1}',v_{2}',v_{4}'\}\subset V(G')$ (or $=\{v_{1}',v_{3}',v_{4}'\}$). Then $|E\cap V(G)|=3$, and $\frac{1}{2}\sum\limits_{v\in V(G)}\sum\limits_{e\in  {E_v(G)}}(r_{\phi}(e)-1)=\frac{1}{2}$, so we have the inequality $6 \leq 4+3+0.5=7.5$ by the second main theorem.
                   \end{itemize}
  \item Take $q=2$, \begin{itemize}
                      \item Let $\{a_1, a_q\}=\{v_{1}',v_{2}'\}\subset V(G')$ (or $=\{v_{1}',v_{3}'\}$, $\{v_{2}',v_{4}'\}$,  $\{v_{3}',v_{4}'\}$). Then
                      $|E\cap V(G)|=2$, and $\frac{1}{2}\sum\limits_{v\in V(G)}\sum\limits_{e\in  {E_v(G)}}(r_{\phi}(e)-1)=\frac{1}{2}$, so we have the inequality $3 \leq 4+2+0.5=6.5$ by the second main theorem.
                      \item Let $\{a_1, a_q\}=\{v_{1}',v_{4}'\}\subset V(G')$. Then $|E\cap V(G)|=2$, and $$\frac{1}{2}\sum\limits_{v\in V(G)}\sum\limits_{e\in  {E_v(G)}}(r_{\phi}(e)-1)=0,$$ so we have the inequality $3 \leq 4+2=6$ by the second main theorem.
                      \item Let $\{a_1, a_q\}=\{v_{2}',v_{3}'\}\subset V(G')$. Then $|E\cap V(G)|=2$, and $$\frac{1}{2}\sum\limits_{v\in V(G)}\sum\limits_{e\in  {E_v(G)}}(r_{\phi}(e)-1)=1,$$ so we have the inequality $3 \leq 4+2+1=7$ by the second main theorem.
                    \end{itemize}
  \item Take $q=1$, \begin{itemize}
                       \item Let $ a_q\in V(G')$ take $v_{1}'$, or, $v_{4}'$, then $\frac{1}{2}\sum\limits_{v\in V(G)}\sum\limits_{e\in  {E_v(G)}}(r_{\phi}(e)-1)=0$ and $|E\cap V(G)|=1$, so we have the inequality $0 \leq 4+1=5$ by the second main theorem.
                       \item Let $ a_q\in V(G')$ take $v_{2}'$, or, $v_{3}'$, then $\frac{1}{2}\sum\limits_{v\in V(G)}\sum\limits_{e\in  {E_v(G)}}(r_{\phi}(e)-1)=\frac{1}{2}$ and $|E\cap V(G)|=1$,  so we have the inequality $0 \leq 4+1+0.5=5.5$ by the second main theorem.
                     \end{itemize}
\end{itemize}
\end{example}

\begin{figure}
  \centering
  \includegraphics[width=8cm]{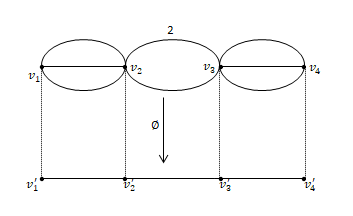}\\
  \caption{A pseudo-harmonic indexed morphism $\phi$ of degree three }\label{fig.4}
\end{figure}

\begin{example} The indexed morphism shown in Figure \ref{fig.5} is a harmonic morphism $\phi: (G, w)\to (G', w').$ Assume that all the indexes of vertical edges are $0$ and the indexes of horizontal edges are $1.$ All $M_{\phi}(v)$ and weights for vertexes are shown in the figure. Then we have:\\
$g=b_1(G)+\sum\limits_{v \in V(G)}w(v)=5,$ $g'=b_1(G')+\sum\limits_{v \in V(G')}w'(v')=5,$ and $\deg(\phi)=1.$ What's more, for each $v\in \{v_{1}, v_{2}, v_{4}, v_{5}\}$ we have $(w(v)-M_{\phi}(v)w'(v'))=0$ and $w(v_{3})-M_{\phi}(v_{3})w'({v'}_{3})=-1.$\par

\begin{itemize}
  \item Take $q=5$, let $\{a_1, \ldots, a_q\}=\{v_{1}', \ldots, v_{5}'\}=V(G').$ Then one can get that $|E\cap V(G)|=6$ and $\frac{1}{2}\sum\limits_{v\in V(G)}\sum\limits_{e\in  {E_v(G)}}(r_{\phi}(e)-1)=-2.$ By the second main theorem we get that $(5+5-1)\times 1\leq 5-1+6+1-2,$ which in fact is an equality. This means that the inequality of the second main theorem is sharp.
  \item Take $q=4$, \begin{itemize}
                      \item Let $\{a_1, \ldots, a_q\}=\{v_{1}',v_{3}',v_{4}', v_{5}'\}\subset V(G')$ (or can take $\{v_{2}',v_{3}',v_{4}', v_{5}'\}$, $\{v_{1}',v_{2}',v_{3}', v_{4}'\},$ $\{v_{1}',v_{2}',v_{3}', v_{5}'\}$). Then $\frac{1}{2}\sum\limits_{v\in V(G)}\sum\limits_{e\in  {E_v(G)}}(r_{\phi}(e)-1)=-2$, $|E\cap V(G)|=5$,  so we have the equality $8= 4+5+1-2=8$ by the second main theorem.
                      \item let $\{a_1, \ldots, a_q\}=\{v_{1}',v_{2}',v_{4}', v_{5}'\}\subset V(G')$. Then $|E\cap V(G)|=4$, and  $\frac{1}{2}\sum\limits_{v\in V(G)}\sum\limits_{e\in  {E_v(G)}}(r_{\phi}(e)-1)=0$ so we have the equality $8= 4+4=8$ by the second main theorem.
                    \end{itemize}
  \item Take $q=3$,\begin{itemize}
                     \item  Let $\{a_1, \ldots, a_q\}=\{v_{1}',v_{3}',v_{4}'\}\subset V(G')$ (or $=\{v_{2}',v_{3}',v_{4}'\}$, $\{v_{1}',v_{2}',v_{3}' \},$ $\{v_{1}',v_{3}', v_{5}'\}$, $\{v_{2}',v_{3}',v_{5}'\}$, $\{v_{3}',v_{4}',v_{5}' \},$). Then $\frac{1}{2}\sum\limits_{v\in V(G)}\sum\limits_{e\in  {E_v(G)}}(r_{\phi}(e)-1)=-2$ and  $|E\cap V(G)|=4$,  so we have the equality $7= 4+4+1-2=7$ by the second main theorem.
                     \item Let $\{a_1, \ldots, a_q\}=\{v_{1}',v_{4}',v_{5}'\}\subset V(G')$ (or $=\{v_{2}',v_{4}',v_{5}'\}$, $\{v_{1}',v_{2}',v_{4}' \},$ $\{v_{1}',v_{2}', v_{5}'\}$). Then $|E\cap V(G)|=3$,  $\frac{1}{2}\sum\limits_{v\in V(G)}\sum\limits_{e\in  {E_v(G)}}(r_{\phi}(e)-1)=0$ so we have the equality $7= 4+3=7$ by the second main theorem.
                   \end{itemize}
  \item Take $q=2$,\begin{itemize}
                     \item  Let $\{a_1, a_q\}=\{v_{1}',v_{3}'\}\subset V(G^{'})$ (or $=\{v_{2}',v_{3}'\}$, $\{v_{3}',v_{4}'\}$,  $\{v_{3}',v_{5}'\}$). Then $|E\cap V(G)|=3$,  $\frac{1}{2}\sum\limits_{v\in V(G)}\sum\limits_{e\in  {E_v(G)}}(r_{\phi}(e)-1)=-2$ so we have the equality $6= 4+3+1-2=6$ by the second main theorem.
                     \item Let $\{a_1, a_q\}=\{v_{1}',v_{2}'\}\subset V(G')$ (or $=\{v_{1}',v_{4}'\}$, $\{v_{1}',v_{5}'\}$,  $\{v_{2}',v_{4}'\}$,  $\{v_{2}',v_{5}'\}$,  $\{v_{4}',v_{5}'\}$). Then $|E\cap V(G)|=2$,  $\frac{1}{2}\sum\limits_{v\in V(G)}\sum\limits_{e\in  {E_v(G)}}(r_{\phi}(e)-1)=0$ so we have the equality $6= 4+2=6$ by the second main theorem.
                   \end{itemize}
  \item Take $q=1$, \begin{itemize}
                      \item Let $a_q\in V(G')$ take $v_{1}'$, or, $v_{2}'$, or, $v_{4}'$, or, $v_{5}'$, then $|E\cap V(G)|=1$,  $\frac{1}{2}\sum\limits_{v\in V(G)}\sum\limits_{e\in  {E_v(G)}}(r_{\phi}(e)-1)=0$ so we have the equality $5= 4+1=5$ by the second main theorem.
                      \item Let $a_q\in V(G')$ take $v_{3}'$, then $|E\cap V(G)|=2$,  $\frac{1}{2}\sum\limits_{v\in V(G)}\sum\limits_{e\in  {E_v(G)}}(r_{\phi}(e)-1)=-2$ so we have the equality $5= 4+2+1-2=5$ by the second main theorem.
                    \end{itemize}
\end{itemize}
\end{example}

\begin{figure}
  \centering
  \includegraphics[width=8cm]{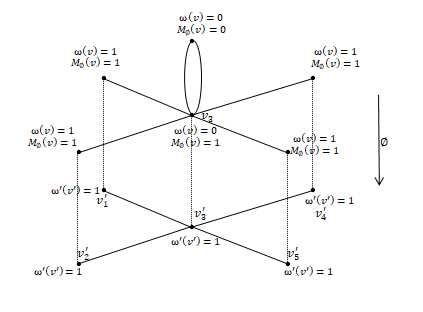}\\
  \caption{A harmonic morphism $\phi$ of degree one}\label{fig.5}
\end{figure}

\section{Second main theorem on metric graphs}
\subsection{Harmonic morphism between metric graphs}\par

It is known in the subsection 2.3 that if the model of a metric graph has loops, we can replace an additional vertex at the midpoint of each loop edge so that it becomes loopless model for the metric graph. So we will only assume that all models of metric graphs are loopless in this section. Let's recall some definitions as in \cite{Cha-2013}. \par

Let $(G,\ell)$ and $(G',{\ell}')$ be loopless models for two metric graphs $\Gamma$ and $\Gamma'$, respectively. A morphism of loopless models $\phi: (G,\ell) \to (G',{\ell}')$ is a map of sets $$ \phi: V(G)\cup E(G)\to V(G')\cup E(G')$$ such that \par
(i)  $\phi(V(G))\subseteq V(G')$;\par
(ii) if $e=xy$ is an edge of $G$ and $\phi(e)\in V(G')$ then $\phi(x)=\phi(e)=\phi(y)$;\par
(iii) if $e=xy$ is an edge of $G$ and $\phi(e)\in E(G')$ then  $\phi(e)$ is an edge between $\phi(x)$ and $\phi(y)$;\par
(iv) if $\phi(e)={e}'$ then ${\ell}'(e')/{\ell}(e)$ is an integer.\\
An edge $e\in E(G)$ is called horizontal if $\phi(e)\in E(G^{'})$ and vertical if $\phi(e)\in V(G^{'}).$ Denote $$U_{\phi}(e)={\ell}'(e')/{\ell}(e) \in \mathbb{Z} $$ to be the slope of this linear map.\par

\begin{definition}\cite{Cha-2013} \label{def5.1} A harmonic morphism between metric graphs $\Gamma$ and $\Gamma^{'}$ is viewed as the morphism $\phi: (G, \ell)\to (G',{\ell}')$ of loopless models, for some choice of models $(G,\ell)$ and $(G',{\ell}'),$ which satisfies that for every $v\in V(G)$, the nonnegative integer $$M_{\phi}(v)=\sum_{e\in E(G)\atop v\in e,\phi(e)=e'}U_{\phi}(e)$$ is the same for all edges $e'\in E(G')$ that are incident to the vertex $\phi(v)$. The number $M_{\phi}(v)$ is called the horizontal multiplicity of $\phi$ at $x.$\end{definition}

\begin{definition}\cite{Cha-2013} \label{def5.2} The degree of a harmonic morphism $\phi$ is defined to be $$\deg(\phi)=\sum_{e\in E(G)\atop \phi(e)=e'}U_{\phi}(e)$$ for any $e'\in E(G')$. If $G'$ has no edges, then set $\deg(\phi)=0.$
\end{definition}

It is known that the number $\deg(\phi)$ does not depend on the choice of $e'.$ If $M_{\phi}(v)\geq 1 $ for all $v\in V(G)$, then $\phi$ is said to be nondegenerate.

\begin{proposition} \label{pro5.3}For any vertex $v'\in V(G')$, we have $$\deg(\phi)=\sum_{v\in V(G)\atop \phi(v)=v'}M_{\phi}(v).$$\end{proposition}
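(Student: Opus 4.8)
The plan is to prove the identity by a double-counting argument over the horizontal edges lying above a single fixed edge of $G'$, in exact parallel with the proofs of Lemma \ref{L3.2} and Lemma \ref{L4.4} in the finite and vertex-weighted settings. Throughout, everything is understood with respect to the fixed loopless models $(G,\ell)$ and $(G',\ell')$ realizing the harmonic morphism $\phi$. First I would dispose of the degenerate case: if $G'$ has no edges, then $\deg(\phi)=0$ by Definition \ref{def5.2}, and the right-hand side is empty/zero as well under the usual convention, so there is nothing to prove. Hence assume $E(G')\neq\emptyset$; since $G'$ is connected, the given vertex $v'$ is an endpoint of at least one edge $e'\in E(G')$, and I fix such an $e'$.

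Next I would apply the defining harmonicity property of Definition \ref{def5.1}: for every $v\in V(G)$ with $\phi(v)=v'$ one has $M_{\phi}(v)=\sum_{e\in E(G),\ v\in e,\ \phi(e)=e'}U_{\phi}(e)$, this value being independent of which edge $e'$ incident to $v'$ is chosen. Summing over all $v\in\phi^{-1}(v')\cap V(G)$ and interchanging the order of summation gives
$$\sum_{v\in V(G)\atop \phi(v)=v'}M_{\phi}(v)=\sum_{v\in V(G)\atop \phi(v)=v'}\ \sum_{e\in E(G)\atop v\in e,\ \phi(e)=e'}U_{\phi}(e)=\sum_{e\in E(G)\atop \phi(e)=e'}U_{\phi}(e)\cdot\#\{\,v\in e:\phi(v)=v'\,\}.$$
The key point is that for every horizontal edge $e=xy$ with $\phi(e)=e'$ the last cardinality equals $1$: by axiom (iii) in the definition of a morphism of models, $\phi(e)=e'$ is an edge joining $\phi(x)$ and $\phi(y)$, so $\{\phi(x),\phi(y)\}$ is exactly the (two-element) set of endpoints of $e'$; since $G'$ is loopless these two endpoints are distinct, whence precisely one of $\phi(x),\phi(y)$ equals $v'$. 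Substituting this back and invoking Definition \ref{def5.2} yields $\sum_{\phi(v)=v'}M_{\phi}(v)=\sum_{\phi(e)=e'}U_{\phi}(e)=\deg(\phi)$, which is the claim.

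There is no serious obstacle in this argument; the only points requiring genuine care are the degenerate case $E(G')=\emptyset$ and the use of the looplessness of $G'$ to guarantee that each horizontal edge above $e'$ contributes its slope exactly once to the double sum. One should also recall, as already recorded right after Definition \ref{def5.2}, that $\sum_{\phi(e)=e'}U_{\phi}(e)$ is independent of $e'$, so the resulting equality is indeed a statement about the single invariant $\deg(\phi)$.
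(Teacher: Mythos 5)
Your proof is correct and follows essentially the same route as the paper's: fix an edge $e'$ incident to $v'$, write $\deg(\phi)=\sum_{\phi(e)=e'}U_{\phi}(e)$, and regroup the sum by the endpoint of each horizontal edge lying over $v'$ to recover $\sum_{\phi(v)=v'}M_{\phi}(v)$. You are somewhat more careful than the paper, which silently passes over the degenerate case $E(G')=\emptyset$ and the use of looplessness to ensure each edge above $e'$ is counted exactly once, but the underlying argument is identical.
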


\begin{proof}For any vertex $v'\in V(G')$, there exists one edge $e'\in E(G')$ such that $v'\in e'.$ Then
\begin{equation*}\deg(\phi)=\sum_{e\in E(G)\atop \phi(e)=e'}U_{\phi}(e)=\sum_{v\in V(G)\atop \phi(v)=v'}\sum_{e\in E(G)\atop v\in e,\phi(e)=e'}U_{\phi}(e)=\sum_{v\in V(G)\atop \phi(v)=v'}M_{\phi}(v).
\end{equation*}
\end{proof}

\begin{definition}\label{def5.4} Let $\phi: \Gamma\to {\Gamma}'$ be a harmonic morphism with two loopless models $(G, \ell)$ and $(G', \ell')$ respectively, the pullback map on divisor ${\phi}^*: Div(\Gamma')\to Div(\Gamma)$ is defined as follows: given $D'\in Div(\Gamma')$, let $$({\phi}^*(D'))(v)=M_{{\phi}}(v)\cdot D'({\phi}(v))$$ for all $v\in V(G).$
\end{definition}

\begin{lemma} \label{lem5.5} Let $\phi: \Gamma\to \Gamma'$ be a harmonic morphism. Then for a canonical divisor $K_{\Gamma'}\in Div(\Gamma')$,  we have $$\deg({\phi}^*(K_{\Gamma'}))=\deg({\phi})\deg(K_{\Gamma'}).$$ \end{lemma}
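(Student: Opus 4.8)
The plan is to compute $\deg(\phi^*(K_{\Gamma'}))$ directly from the definition of the pullback and the formula for the canonical divisor, and then collapse the resulting double sum using Proposition \ref{pro5.3}. Recall that $K_{\Gamma'}=\sum_{v'\in V(G')}(val(v')-2)(v')$, so $K_{\Gamma'}(v')=val(v')-2$ as a function on $V(G')$. By Definition \ref{def5.4}, for each $v\in V(G)$ we have $(\phi^*(K_{\Gamma'}))(v)=M_\phi(v)\cdot K_{\Gamma'}(\phi(v))$, hence
\begin{equation*}
\deg(\phi^*(K_{\Gamma'}))=\sum_{v\in V(G)}M_\phi(v)\,K_{\Gamma'}(\phi(v)).
\end{equation*}

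The key step is to regroup this sum over the fibers of $\phi$: partitioning $V(G)$ according to the value $v'=\phi(v)$ gives
\begin{equation*}
\sum_{v\in V(G)}M_\phi(v)\,K_{\Gamma'}(\phi(v))=\sum_{v'\in V(G')}\Biggl(\sum_{v\in V(G)\colon \phi(v)=v'}M_\phi(v)\Biggr)K_{\Gamma'}(v')=\sum_{v'\in V(G')}\deg(\phi)\,K_{\Gamma'}(v'),
\end{equation*}
where the inner sum is evaluated by Proposition \ref{pro5.3}. Factoring out $\deg(\phi)$ yields $\deg(\phi)\sum_{v'\in V(G')}K_{\Gamma'}(v')=\deg(\phi)\deg(K_{\Gamma'})$, which is the claim. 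One small caveat to address is the case $\deg(\phi)=0$: by Definition \ref{def5.2} this occurs when $G'$ has no edges, so $\phi$ is constant and $\phi^*(K_{\Gamma'})=0$ as a divisor, hence both sides vanish and the identity still holds trivially.

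I do not expect any serious obstacle here; the statement is essentially the specialization to $D'=K_{\Gamma'}$ of the general fact that $\deg(\phi^*(D'))=\deg(\phi)\deg(D')$ for any $D'\in Div(\Gamma')$, which is proved for the graph-theoretic case in Lemma 3.5 (\cite[Lemma 2.8]{BN-2009}). The only thing to be mildly careful about is that the pullback in Definition \ref{def5.4} is defined vertex-by-vertex via the chosen loopless models $(G,\ell)$ and $(G',\ell')$, so one should note that $\deg$ of a divisor supported on $V(G)$ is just the sum of its coefficients over $V(G)$, and that the fiber decomposition $V(G)=\bigsqcup_{v'\in V(G')}\phi^{-1}(v')$ used above is legitimate because $\phi(V(G))\subseteq V(G')$. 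With those remarks in place the computation is the two displayed lines above.
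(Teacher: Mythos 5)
Your proposal is correct and follows essentially the same route as the paper: unwind Definition \ref{def5.4}, regroup the sum over the fibers $\phi^{-1}(v')$ for $v'\in V(G')$, and apply Proposition \ref{pro5.3} to collapse each inner sum to $\deg(\phi)$. The extra remark about the degenerate case is harmless but not needed, since the fiber regrouping together with Proposition \ref{pro5.3} already covers it.
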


\begin{proof}
By Definition \ref{def5.4} we get that $${\phi}^*(K_{\Gamma'})=\sum_{v\in {V(G)}} M_{{\phi}}(v)\cdot K_{\Gamma'}({\phi}(v))(v)=\sum_{v'\in {V(G')}}\sum_{v\in V(G)\atop \phi(v)=v'} M_{{\phi}}(v)\cdot K_{\Gamma'}((v'))(v).$$ And $$\deg({\phi}^*(K_{\Gamma'}))=\sum_{v'\in {V(G')}}\sum_{v\in V(G)\atop \phi(v)=v'} M_{{\phi}}(v)\cdot K_{\Gamma'}((v')).$$
 Hence , by Proposition \ref{pro5.3}, we obtain the lemma.
\end{proof}

\subsection{Riemann-Hurwitz theorem for metric graphs}\par
We now prove the Riemann-Hurwitz theorem for metric graphs.\par

\begin{theorem} \label{the5.6} Let $\phi$ be a harmonic morphism between two metric graphs $\Gamma, \Gamma'$ with two loopless models $(G, \ell)$ and $(G', \ell'),$ respectively. Then\\

(i). the canonical divisors on $\Gamma$ and $\Gamma'$ are related by the formula $$K_\Gamma={\phi}^*K_{\Gamma'}+R_{\phi},$$ where $$R_{\phi}=\sum_{v\in V(G)}\left(2(M_{\phi}(v)-1)-\sum_{e\in E_v(G)}(U_{\phi}(e)-1)\right)(v).$$\\

(ii). $$2g-2=\deg(\phi)(2g'-2)+\sum_{v\in V(G)}(2(M_{\phi}(v)-1)-\sum_{e\in E_v(G)}(U_{\phi}(e)-1)),$$ where $g$ and $g'$ are genus of $\Gamma$ and $\Gamma^{'}$ respectively.
\end{theorem}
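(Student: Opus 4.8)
The plan is to establish part (i) by comparing, vertex by vertex, the coefficients on the two sides of the claimed identity, and then to obtain part (ii) simply by taking degrees in (i).

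First I would fix, as in Definition~\ref{def5.1}, compatible loopless models $(G,\ell)$ and $(G',\ell')$ for which $\phi$ is combinatorial, so that $\phi(V(G))\subseteq V(G')$ and every edge of $G$ is either horizontal (mapped to an edge, with integer slope $U_{\phi}(e)$) or vertical (mapped to a vertex); for vertical $e$ I adopt the convention $U_{\phi}(e)=0$. Since any point of $\Gamma$ lying in the interior of an edge of $G$ has valence $2$, the divisors $K_{\Gamma}$, $\phi^{*}K_{\Gamma'}$ and $R_{\phi}$ are all supported on $V(G)$, so it suffices to match coefficients at an arbitrary $v\in V(G)$, with $v'=\phi(v)$. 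The one input I need beyond the definitions is the local identity
$$\sum_{e\in E_v(G)}U_{\phi}(e)=M_{\phi}(v)\,val(v'),$$
obtained by summing the defining relation $M_{\phi}(v)=\sum_{e\in E_v(G),\,\phi(e)=e'}U_{\phi}(e)$ of Definition~\ref{def5.1} over the $val(v')$ edges $e'$ incident to $v'$ (vertical edges drop out on the left because of the convention). Feeding $K_{\Gamma}(v)=val(v)-2$, the value $(\phi^{*}K_{\Gamma'})(v)=M_{\phi}(v)(val(v')-2)$ from Definition~\ref{def5.4}, and $R_{\phi}(v)=2(M_{\phi}(v)-1)-\sum_{e\in E_v(G)}(U_{\phi}(e)-1)=2(M_{\phi}(v)-1)-M_{\phi}(v)val(v')+val(v)$ into the right-hand side, everything collapses to $val(v)-2=K_{\Gamma}(v)$, which proves (i). I would also remark that refining the models changes none of the quantities involved (new vertices are $2$-valent with $M_{\phi}=U_{\phi}$ there), so the statement does not depend on the chosen models.

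For (ii) I would take degrees on both sides of $K_{\Gamma}=\phi^{*}K_{\Gamma'}+R_{\phi}$. The identity $\deg(K_{\Gamma})=2g-2$ (and likewise $\deg(K_{\Gamma'})=2g'-2$) was recorded in Subsection~2.3; Lemma~\ref{lem5.5} gives $\deg(\phi^{*}K_{\Gamma'})=\deg(\phi)\deg(K_{\Gamma'})=\deg(\phi)(2g'-2)$; and by definition $\deg(R_{\phi})=\sum_{v\in V(G)}\big(2(M_{\phi}(v)-1)-\sum_{e\in E_v(G)}(U_{\phi}(e)-1)\big)$. Combining these yields the asserted formula, exactly mirroring the passage from (i) to (ii) in the finite-graph Riemann--Hurwitz theorem (Theorem~\ref{T3.7}).

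The computations here are routine; the only points that require care are the treatment of vertical edges (fixing the convention $U_{\phi}(e)=0$ so that the local identity takes the clean form $\sum_{e\in E_v(G)}U_{\phi}(e)=M_{\phi}(v)\,val(v')$) and the check that all objects are genuinely defined on the metric graphs rather than on a particular pair of models. Neither is a serious obstacle, so I expect no essential difficulty in carrying the argument through.
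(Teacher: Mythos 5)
Your proposal is correct and follows essentially the same route as the paper's proof: the same vertex-by-vertex comparison of coefficients using $K_{\Gamma}(v)=val(v)-2$, $(\phi^{*}K_{\Gamma'})(v)=M_{\phi}(v)(val(v')-2)$, and the local identity $\sum_{e\in E_v(G)}U_{\phi}(e)=M_{\phi}(v)\,val(v')$ for (i), followed by taking degrees via Lemma \ref{lem5.5} for (ii). The extra remarks on the vertical-edge convention and model-independence are harmless refinements, not a different argument.
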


\begin{proof}
For every $v\in V(G)$, we have $K_{\Gamma}(v)=val(v)-2$. Then, writing $v'={\phi}(v)$, by Definition \ref{def5.4}, we have
\begin{eqnarray*}
K_\Gamma(v)-{\phi}^*K_{\Gamma'}(v) &=& val(v)-2-M_{\phi}(v)(val(v')-2)\nonumber\\
&=& 2(M_{\phi}(v)-1)+val(v)-M_{\phi}(v)val(v').
\end{eqnarray*}
On the other hand, by Definition \ref{def5.1}, we have $$\sum_{e\in E_v(G)}(U_{\phi}(e)-1)=\sum_{e\in E_v(G)}U_{\phi}(e)-val(v)=M_{\phi}(v)val(v')-val(v)$$ for the $v\in V(G).$ The two above identities imply $$K_\Gamma(v)-{\phi}^*K_{\Gamma'}(v)=R_{\phi}(v),$$ so the conclusion (i) is proved.\par

Notice that the genus of a metric graph is independent of the choice of its models. The conclusion (ii) follows immediately from Lemma \ref{lem5.5} upon computing the degrees of the divisors on both sides of the above formula.
\end{proof}

\subsection{Second main theorem on metric graphs} Now, the Riemann-Hurwitz theorem for metric graphs gives the second main theorem for harmonic morphisms on metric graphs as follows.\par

\begin{theorem} Let $\Gamma$, $\Gamma'$ be two metric graphs with genus $g$ and $g'$, respectively, and let ${\phi}:\Gamma\to \Gamma'$ be a harmonic morphism for some choice of loopless models $(G, \ell)$ and $(G', \ell').$  Suppose $a_1, \ldots, a_q\in V(G')$ be distinct vertices, let $E={\phi}^{-1}(\{a_1, \ldots, a_q\}).$ Then we have $$(q+g'-1)\deg(\phi)\leq g-1+|E\cap V(G)|+ \frac{1}{2}\sum\limits_{v\in V(G)}\sum\limits_{e\in {E_v(G)}}(U_{\phi}(e)-1),$$ where $|E\cap V(G)|$ is the cardinality of $E\cap V(G).$
\end{theorem}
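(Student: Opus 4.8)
The plan is to follow the same strategy used for the second main theorems on finite graphs and vertex-weighted graphs in Sections 3 and 4: combine the Riemann-Hurwitz theorem for metric graphs (Theorem \ref{the5.6}) with a count, over the fibre $E$, of the vertex ramification quantities $M_\phi(v)-1$.

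First I would introduce the two ramification sums
\begin{equation*}
r_\phi(E):=\sum_{v\in E\cap V(G)}(M_\phi(v)-1),\qquad r_\phi(\Gamma):=\sum_{v\in V(G)}(M_\phi(v)-1),
\end{equation*}
and observe that $r_\phi(E)\le r_\phi(\Gamma)$, since the omitted terms run over $v\in V(G)\setminus E$ and are nonnegative. Next, using Definition \ref{def5.2}, Definition \ref{def5.1} and Proposition \ref{pro5.3}, I would note that for each $a_j$ one has $\phi^{-1}(a_j)\cap V(G)\subseteq E\cap V(G)$, so that $\sum_{v\in E\cap V(G),\,\phi(v)=a_j}M_\phi(v)=\sum_{v\in V(G),\,\phi(v)=a_j}M_\phi(v)=\deg(\phi)$; summing over $j=1,\dots,q$ then gives
\begin{equation*}
r_\phi(E)=q\deg(\phi)-|E\cap V(G)|.
\end{equation*}

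The remaining input is the value of $r_\phi(\Gamma)$, which I would extract from Theorem \ref{the5.6}(ii) by dividing the Riemann-Hurwitz identity by $2$:
\begin{equation*}
r_\phi(\Gamma)=g-1-(g'-1)\deg(\phi)+\frac{1}{2}\sum_{v\in V(G)}\sum_{e\in E_v(G)}(U_\phi(e)-1).
\end{equation*}
Substituting the two displayed expressions into $r_\phi(E)\le r_\phi(\Gamma)$ and rearranging then yields
\begin{equation*}
(q+g'-1)\deg(\phi)\le g-1+|E\cap V(G)|+\frac{1}{2}\sum_{v\in V(G)}\sum_{e\in E_v(G)}(U_\phi(e)-1),
\end{equation*}
as claimed.

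There is essentially no hard step here: the argument is a transcription of the finite-graph and vertex-weighted-graph proofs, with Theorem \ref{T3.7}(ii) (resp. Theorem \ref{T4.8}) replaced by Theorem \ref{the5.6}(ii). The only points to watch are the tacit nondegeneracy-type hypothesis $M_\phi(v)\ge 1$ (needed so that $r_\phi(E)\le r_\phi(\Gamma)$, exactly as in the earlier sections) and the convention $U_\phi(e)=0$ for vertical edges $e$, under which $\sum_{e\in E_v(G)}U_\phi(e)=M_\phi(v)\,val(\phi(v))$ and the double sum above is precisely the one appearing in the statement.
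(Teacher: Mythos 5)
Your proposal is correct and follows essentially the same route as the paper's own proof: the same definitions of $r_{\phi}(E)$ and $r_{\phi}(\Gamma)$, the identity $r_{\phi}(E)=q\deg(\phi)-|E\cap V(G)|$ via Proposition \ref{pro5.3}, and the evaluation of $r_{\phi}(\Gamma)$ from Theorem \ref{the5.6}(ii). Your closing remark that the inequality $r_{\phi}(E)\leq r_{\phi}(\Gamma)$ tacitly requires $M_{\phi}(v)\geq 1$ on $V(G)\setminus E$ is a point the paper passes over as ``obvious,'' and is worth keeping.
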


\begin{proof}
Set $$r_{\phi}(E):=\sum_{v\in E\cap V(G)}(M_{\phi}(v)-1),$$ and $$r_{\phi}(\Gamma):=\sum_{v\in V(G)}(M_{\phi}(v)-1).$$ It is obvious that $$r_{\phi}(E)\leq r_{\phi}(\Gamma).$$ From the definition of the degree of the harmonic morphism $\phi:(G, \ell)\to (G', \ell')$ of loopless models, the horizontal multiplicity of $\phi$ at $v$ and Proposition \ref{pro5.3} we get that $$\sum_{v\in E\cap V(G)\atop a_j=\phi(v)}M_{\phi}(v)=\sum_{v\in V(G)\atop a_j=\phi(v)}M_{\phi}(v)=\deg(\phi)$$ holds for each $ a_j\in \{a_1, \ldots, a_q\}.$ Then we have  \begin{eqnarray*}r_{\phi}(E)&=&\sum_{v\in E\cap V(G)}(M_{\phi}(v)-1)\\\nonumber&=&(\sum\limits_{j=1}^q \sum\limits_{v\in E\cap V(G)\atop a_j=\phi(v)}M_{\phi}(v))-|E\cap V(G)|\\\nonumber&=&q\deg(\phi)-|E\cap V(G)|.\end{eqnarray*}  On the other hand, by the Riemann-Hurwitz theorem for metric graphs (Theorem \ref{the5.6}), we have
$$r_{\phi}(\Gamma)=g-1-(g'-1)\deg(\phi)+\frac{1}{2}\sum\limits_{v\in V(G)}\sum_{e\in E_{v}(G)}(U_{\phi}(e)-1).$$ Hence, we get the following inequality $$(q+g'-1)\deg(\phi)\leq g-1+|E\cap V(G)|+\frac{1}{2}\sum\limits_{v\in V(G)}\sum_{e\in E_{v}(G)}(U_{\phi}(e)-1).$$
\end{proof}
We give an example that satisfies the second main theorem for metric graphs.\par
\begin{figure}
  \centering
  \includegraphics[width=8cm]{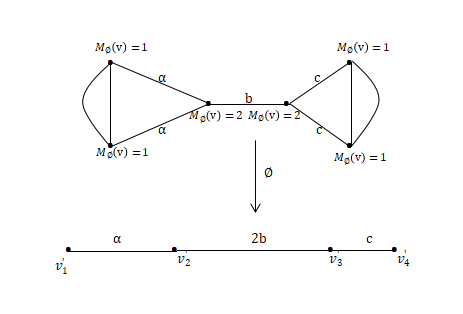}\\
  \caption{A harmonic morphism of degree two}\label{fig.6}
\end{figure}

\begin{example} In the following Figure \ref{fig.6} \cite[Fig.1]{Cha-2013}, we have a harmonic morphism $\phi$ on metric graphs, labeled next to the corresponding vertices. Here, $a$, $b$, and $c$ are positive real numbers. It is known by calculation that:\\
$g=|E(G)|-|V(G)|+1=4,$ $g'=|E(G^{'})|-|V(G^{'})|+1=0,$ and $\deg(\phi)=2.$ \par
\begin{itemize}
  \item Take $q=4$ and let $\{a_1, \ldots, a_q\}=\{v_{1}', \ldots, v_{4}'\}=V(G').$ Then one can get that $|E\cap V(G)|=6$ and $\frac{1}{2}\sum\limits_{v\in V(G)}\sum\limits_{e\in E_{v}(G)}(U_{\phi}(e)-1)=-3.$ By the second main theorem we get that $(4-1)\times 2\leq 3+6-3=6,$ which in fact is an equality. This means that the inequality of the second main theorem is sharp.
  \item Take $q=3$. \begin{itemize}
                      \item Let $\{a_1, \ldots, a_q\}=\{v_{1}',v_{2}',v_{3}'\}\subset V(G')$ (or $=\{v_{2}',v_{3}',v_{4}'\}$). Then $|E\cap V(G)|=4$, and $\frac{1}{2}\sum\limits_{v\in V(G)}\sum\limits_{e\in E_{v}(G)}(U_{\phi}(e)-1)=-1$, so we have the inequality $4 \leq 3+4-1=6$ by the second main theorem.
                      \item Let $\{a_1, \ldots, a_q\}=\{v_{1}',v_{2}',v_{4}'\}\subset V(G')$ (or $=\{v_{1}',v_{3}',v_{4}'\}$). Then$|E\cap V(G)|=5$, and $\frac{1}{2}\sum\limits_{v\in V(G)}\sum\limits_{e\in E_{v}(G)}(U_{\phi}(e)-1)=-3.5$, so we have the inequality $4 \leq 3+5-3.5=4.5$ by the second main theorem.
                    \end{itemize}
  \item  Take $q=2$, \begin{itemize}
                       \item  Let $\{a_1, a_q\}=\{v_{1}',v_{2}'\}\subset V(G')$ (or $=\{v_{1}',v_{3}'\}, \{v_{2}',v_{4}'\}, \{v_{3}',v_{4}'\}$). Then $|E\cap V(G)|=3$, and  $\frac{1}{2}\sum\limits_{v\in V(G)}\sum\limits_{e\in E_{v}(G)}(U_{\phi}(e)-1)=-1.5$, so we have the inequality $2 \leq 3+3-1.5=4.5$ by the second main theorem.
                       \item Let $\{a_1, a_q\}=\{v_{1}',v_{4}'\}\subset V(G')$. Then $\frac{1}{2}\sum\limits_{v\in V(G)}\sum\limits_{e\in E_{v}(G)}(U_{\phi}(e)-1)=-4$ and $|E\cap V(G)|=4$, so we have the inequality $2 \leq 3+4-4=3$ by the second main theorem.
                       \item Let $\{a_1, a_q\}=\{v_{2}',v_{3}'\}\subset V(G')$. Then $\frac{1}{2}\sum\limits_{v\in V(G)}\sum\limits_{e\in E_{v}(G)}(U_{\phi}(e)-1)=1$  and  $|E\cap V(G)|=2$, so we have the inequality $2 \leq 3+2+1=6$ by the second main theorem.
                     \end{itemize}
  \item Take $q=1$, \begin{itemize}
                      \item Let $a_q\in V(G')$ take $v_{1}'$, or, $v_{4}'$. Then $\frac{1}{2}\sum\limits_{v\in V(G)}\sum\limits_{e\in E_{v}(G)}(U_{\phi}(e)-1)=-2$ and $|E\cap V(G)|=2$, so we have the inequality $0 \leq 3+2-2=3$ by the second main theorem.
                      \item Let $a_q\in V(G')$ take $v_{2}'$, or, $v_{3}'$. Then  $\frac{1}{2}\sum\limits_{v\in V(G)}\sum\limits_{e\in E_{v}(G)}(U_{\phi}(e)-1)=0.5$ and $|E\cap V(G)|=1$, so we have the inequality $0 \leq 3+1+0.5=4.5$ by the second main theorem.
                    \end{itemize}
\end{itemize}
\end{example}

\section{Second main theorem for vertex-weighted metric graphs}
 \subsection{Pseudo-harmonic morphisms between vertex-weighted metric graphs}
 From Subsection 2.4., for a vertex-weighted metric graph $(\Gamma, w),$ we can associate it the \emph{pseudo-metric graph} $(G^w, \ell^w)$  and then consider a \emph{pure metric graph} $\Gamma_{\varepsilon}^w.$ Conversely, when we deal with a vertex-weighted metric graph with loops, we can add the vertex-weighted number corresponding to the number of loops at that point. So we will only suppose that all vertex-weighted metric graphs are loopless in this section.\par

Let $(\Gamma,w)=(G, w, \ell)$ and $({\Gamma}', w')=(G', w', {\ell}')$ be loopless vertex-weighted metric graphs. A morphism between vertex-weighted metric graphs  $(\Gamma,w)$ and $({\Gamma}',w')$ is defined as a morphism of loopless models $\phi: (G,\ell) \to (G',{\ell}')$ for metric graphs $\Gamma$ and ${\Gamma}'$. We now introduce the definition of  pseudo-harmonic morphism between vertex-weighted metric graphs.\par

\begin{definition}\label{def6.1} Let $(\Gamma,w)=(G,w,\ell)$ and $({\Gamma}',w')=(G',w',{\ell}')$ be loopless vertex-weighted metric graphs. Suppose that $(G,\ell)$ and $(G',{\ell}')$ are loopless models for $\Gamma$ and $\Gamma'$, respectively.\par

(i) For every $e\in E(G)$, there is always a non-negative integer, the slope of $\phi$ at $e$, also written $U_{\phi}(e)$, where $$U_{\phi}(e)={{\ell}'(e')/\ell(e)}\in \mathbb{Z}.$$ We can easily find out $U_{\phi}(e)=0$ if and only if $\phi(e)$ is a point. If for every $e\in E(G), U_{\phi}(e)\leq 1$, then the morphism is simple.\par

(ii) A morphism is pseudo-harmonic if for every $v\in V(G)$, there exists a nonnegative integer $$M_{\phi}(v)=\sum_{e\in E(G)\atop v\in e,\phi(e)=e'}U_{\phi}(e)$$ is the same for all edges $e'\in E(G')$ that are incident to the vertex $\phi(v)$.

(iii)  A harmonic morphism is non-degenerate if $M_{\phi}(v) \geq 1$, for every $v\in V(G).$\par
(iv) A pseudo-harmonic morphism is harmonic if for every $v\in V(G)$ we have, writing ${v}'= {\phi}(v)$,  $$\sum_{e\in {E(G)}}(U_{\phi}(e)-1)\leq 2(M_{\phi}(v)-1+w(v)-M_{\phi}(v)w'(v')).$$
\end{definition}

\begin{remark} Suppose that $\phi$ contracts a leaf-edge $e$ whose leaf-vertex $v$ has weight-zero. Then $U_{\phi}(e)=M_{\phi}(v)=0$ and we can known that $(iv)$ is not satisfied on the point $v.$ So, loosely speaking, a harmonic morphism can't contract weight-zero leaves.
\end{remark}
\begin{remark} [Relation with harmonic morphisms of metric graphs $\Gamma$ and ${\Gamma}'$] For morphisms of vertex-weightless metric graphs our definition of harmonic morphism between vertex-weighted metric graphs $(\Gamma, w)=(G, w, \ell)$ and $({\Gamma}', w')=(G', w', {\ell}')$ coincides with the harmonic morphisms of metric graphs $\Gamma$ and ${\Gamma}'$ which contract no leaves.
\end{remark}

\begin{remark}[Relation with pseudo-harmonic indexed (resp. harmonic) morphism of vertex-weighted graphs] For simple morphisms our definition of pseudo-harmonic (resp.harmonic) morphism between vertex-weighted metric graphs $(\Gamma, w)$ and $({\Gamma}', w')$ coincides with the pseudo-harmonic indexed (resp. harmonic) morphism between simple vertex-weighted graphs $(G, w)$ and $(G',w')$. One notable difference is that in \cite{Cap-2014}, only the combinatorial type of the metric graphs are fixed; the choice of positive indices in a pseudo-harmonic indexed morphism determines the length of the edges in the source graph once the edge lengths in the target are fixed.
\end{remark}

\begin{remark} [Relation with harmonic morphisms of finite graphs \cite{BN-2009}] For simple morphisms of vertex-weightless metric graphs, the above definition of harmonic morphism coincides with the one given in Section 3 (\cite{BN-2009}) for morphisms which contract no leaves.\end{remark}

We now define the degree of $\phi$ as follows.\par

\begin{definition}
 If $\phi: (\Gamma,w)\to (\Gamma',w')$ be a pseudo-harmonic morphism. Then for every $e'\in E(G')$, the degree of $\phi$ is defined as follows $$\deg(\phi)=\sum_{e\in E(G)\atop \phi(e)=e'}U_{\phi}(e).$$ If $G'$ has no edges, then set $\deg(\phi)=0.$
\end{definition}

By the (ii) of Definition \ref{def6.1}, we know that the $\deg(\phi)$ does not depend on the choice of $e'.$ With the same proof, we also have the same result as Proposition \ref{pro5.3}.\par

\begin{lemma}\label{lem6.7} Let $\phi: (\Gamma,w)\to (\Gamma',w')$ be a pseudo-harmonic morphism with two loopless models $(G, \ell)$ and $(G', \ell')$ respectively. For any vertex $v'\in V(G')$, we have $$\deg(\phi)=\sum_{v\in V(G)\atop \phi(v)=v'}M_{\phi}(v).$$\end{lemma}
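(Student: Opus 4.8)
The plan is to reproduce, essentially verbatim, the argument of Proposition \ref{pro5.3}, with the edge slopes $U_{\phi}(e)$ playing the same bookkeeping role as before and the pseudo-harmonicity condition (ii) of Definition \ref{def6.1} supplying the key collapse of an inner sum into $M_{\phi}(v)$. First I would dispose of the degenerate case: if $G'$ has no edges, then $\deg(\phi)=0$ by convention, $G'$ reduces to the single vertex $v'$, every vertex of $G$ maps to $v'$, and the horizontal multiplicities are taken to be $0$ (in analogy with the finite-graph convention), so both sides of the asserted identity vanish. Hence I may assume $E(G')\neq\emptyset$.

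Next, fix the target vertex $v'\in V(G')$. Since $G'$ is connected and has at least one edge, there is some edge $e'\in E(G')$ incident to $v'$; choose one such $e'$. By Definition \ref{lem6.7}'s preceding degree definition, $\deg(\phi)=\sum_{e\in E(G),\,\phi(e)=e'}U_{\phi}(e)$, a quantity already observed to be independent of the choice of $e'$. The heart of the argument is to regroup this sum according to which endpoint of each preimage edge lies over $v'$: because the model $(G',\ell')$ is loopless, $e'$ has two \emph{distinct} endpoints $v'$ and $u'$, and for any $e=xy\in E(G)$ with $\phi(e)=e'$ the axioms for a morphism of loopless models force $\{\phi(x),\phi(y)\}=\{v',u'\}$, so exactly one endpoint of $e$ maps to $v'$. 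Consequently the set $\{e\in E(G):\phi(e)=e'\}$ is partitioned by the sets $\{e\in E_v(G):\phi(e)=e'\}$ as $v$ ranges over the vertices of $G$ with $\phi(v)=v'$. Summing the slopes over this partition and then applying pseudo-harmonicity — that $M_{\phi}(v)=\sum_{e\in E_v(G),\,\phi(e)=e'}U_{\phi}(e)$ for any edge $e'$ incident to $\phi(v)=v'$ — yields
\[
\deg(\phi)=\sum_{\substack{v\in V(G)\\ \phi(v)=v'}}\ \sum_{\substack{e\in E_v(G)\\ \phi(e)=e'}}U_{\phi}(e)=\sum_{\substack{v\in V(G)\\ \phi(v)=v'}}M_{\phi}(v),
\]
which is the claim.

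I do not expect any real obstacle here: the statement is the metric-graph analogue of Lemma \ref{L3.2} and Proposition \ref{pro5.3}, and the proof is the same elementary double-counting. The only two points that deserve a line of care are the handling of the degenerate case $E(G')=\emptyset$ (dealt with purely by convention) and the observation that every preimage edge of $e'$ has exactly one endpoint over $v'$, which is where looplessness of $(G',\ell')$ and the definition of a morphism of models are genuinely used; once that is noted, the rearrangement of the finite sum and the invocation of condition (ii) of Definition \ref{def6.1} complete the proof.
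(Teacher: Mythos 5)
Your proof is correct and follows essentially the same route as the paper, which simply observes that the argument of Proposition \ref{pro5.3} carries over verbatim: pick an edge $e'$ incident to $v'$, split the sum $\sum_{\phi(e)=e'}U_{\phi}(e)$ according to the endpoint of $e$ lying over $v'$, and collapse the inner sums via the pseudo-harmonicity condition. Your extra care about the degenerate case $E(G')=\emptyset$ and about looplessness guaranteeing exactly one endpoint over $v'$ is a welcome refinement but does not change the approach.
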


\begin{definition}\label{def6.8} Let $\phi: (\Gamma,w)\to (\Gamma',w')$ be a pseudo-harmonic morphism with two loopless models $(G, \ell)$ and $(G', \ell')$ respectively, the pullback map on divisor ${\phi}^*: Div({\Gamma}',w')\to Div({\Gamma},w)$ is defined as follows: given $D'\in Div({\Gamma}',w')$, $${\phi}^*(D')(v)=M_{{\phi}}(v)\cdot D'({\phi}(v))$$ for all vertex $v\in V(G).$
\end{definition}

\begin{lemma} \label{lem6.9} Let $\phi: (\Gamma,w)\to (\Gamma',w')$ be a pseudo-harmonic morphism with two loopless models $(G, \ell)$ and $(G', \ell')$ respectively. Then for a canonical divisor $K_{(\Gamma', w')},$ we have $$\deg({\phi}^*(K_{(\Gamma^{'}, w')}))=\deg({\phi})\deg(K_{(\Gamma^{'}, w')}).$$ \end{lemma}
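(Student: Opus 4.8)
The plan is to run exactly the same computation as in Lemma \ref{lem5.5}, now invoking Definition \ref{def6.8} for the pull-back and Lemma \ref{lem6.7} (the vertex-weighted analogue of Proposition \ref{pro5.3}) in place of Proposition \ref{pro5.3}. First I would record that the canonical divisor
$$K_{(\Gamma',w')}=\sum_{v'\in V(G')}\bigl(val(v')-2+2w'(v')\bigr)(v')$$
is supported on the vertex set $V(G')$, so that Definition \ref{def6.8} applies verbatim: $\phi^*(K_{(\Gamma',w')})$ is the divisor on $(\Gamma,w)$ whose coefficient at each $v\in V(G)$ is $M_\phi(v)\cdot K_{(\Gamma',w')}(\phi(v))$, and it is again supported on $V(G)$.

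Next I would expand the degree by grouping the vertices of $G$ according to their image, writing
\begin{eqnarray*}
\deg\bigl(\phi^*(K_{(\Gamma',w')})\bigr)
&=&\sum_{v\in V(G)}M_\phi(v)\,K_{(\Gamma',w')}(\phi(v))\\
&=&\sum_{v'\in V(G')}\;\sum_{\substack{v\in V(G)\\ \phi(v)=v'}}M_\phi(v)\,K_{(\Gamma',w')}(v')\\
&=&\sum_{v'\in V(G')}K_{(\Gamma',w')}(v')\Bigl(\sum_{\substack{v\in V(G)\\ \phi(v)=v'}}M_\phi(v)\Bigr).
\end{eqnarray*}
By Lemma \ref{lem6.7} the inner sum equals $\deg(\phi)$ for every $v'\in V(G')$ (and when $G'$ has no edges both sides are $0$ by convention, so that degenerate case is immediate). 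Pulling $\deg(\phi)$ out of the sum leaves $\deg(\phi)\sum_{v'\in V(G')}K_{(\Gamma',w')}(v')=\deg(\phi)\deg(K_{(\Gamma',w')})$, which is the claim.

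There is no real obstacle here; the argument is pure bookkeeping. The only point that needs a moment's care is that the pull-back in Definition \ref{def6.8} is only defined on divisors supported on $V(G')$, so one must note at the outset that the canonical divisor $K_{(\Gamma',w')}$ is of that form before applying the formula; after that, Lemma \ref{lem6.7} does all the work. In fact the same proof shows $\deg(\phi^*(D'))=\deg(\phi)\deg(D')$ for every $D'\in Div(\Gamma',w')$ supported on $V(G')$, and the canonical divisor is a special case.
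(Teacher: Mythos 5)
Your proposal is correct and follows exactly the paper's own argument: expand $\phi^*(K_{(\Gamma',w')})$ via Definition \ref{def6.8}, group the vertices of $G$ by their image under $\phi$, and apply Lemma \ref{lem6.7} to identify the inner sum with $\deg(\phi)$. The extra remarks about the divisor being supported on $V(G')$ and the degenerate case are sensible but do not change the substance.
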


\begin{proof}
By Definition \ref{def6.8}, we get that \begin{eqnarray*}&&{\phi}^*(K_{(\Gamma', w')})\\&=&\sum_{v\in {V(G)}} M_{{\phi}}(v)\cdot K_{(\Gamma', w')}({\phi}(v))(v)\\&=&\sum_{v'\in {V(G')}}\sum_{v\in V(G)\atop \phi(v)=v'} M_{{\phi}}(v)\cdot K_{(\Gamma', w')}((v'))(v).\end{eqnarray*} And $$\deg({\phi}^*(K_{(\Gamma', w')}))=\sum_{v'\in {V(G')}}\sum_{v\in V(G)\atop \phi(v)=v'} M_{{\phi}}(v)\cdot K_{(\Gamma', w')}((v')).$$
 Hence , by Lemma \ref{lem6.7}, we obtain the lemma.

\end{proof}

\subsection{Riemann-Hurwitz theorem for vertex-weighted metric graphs}\par
\begin{theorem} \label{the6.10} Let $(\Gamma,w)=(G,w,\ell)$ and $({\Gamma}',w')=(G',w',{\ell}')$ be loopless vertex-weighted metric graphs. Let $\phi: (\Gamma,w)\to (\Gamma',w')$ be a pseudo-harmonic morphism with two loopless models $(G, \ell)$ and $(G', \ell'),$ respectively. Then\\

(i). the canonical divisors on $(\Gamma,w)$ and $(\Gamma',w')$ are related by the formula $$K_{(\Gamma,w)}={{\phi}}^*K_{(\Gamma',w')}+R_{{\phi}},$$ where \begin{eqnarray*}
R_{\phi}:=\sum_{v\in V(G)}\left(2(M_{\phi}(v)-1+w(v)-M_{\phi}(v)w'(v'))-\sum\limits_{e\in {E_v(G)}}(U_{\phi}(e)-1)\right)(v),
\end{eqnarray*}
(ii). \begin{eqnarray*}
2g-2 &=& \deg(\phi)(2g'-2)+\sum_{v\in V(G)}2(M_{\phi}(v)-1+w(v)-M_{\phi}(v)w'(v'))\nonumber\\
&\;&- \sum\limits_{v\in V(G)}\sum\limits_{e\in  {E_v(G)}}(U_{\phi}(e)-1),
\end{eqnarray*} where $g$ and $g'$ are genus of $(\Gamma,w)$ and $(\Gamma',w')$ respectively, and $v'=\phi(v).$
\end{theorem}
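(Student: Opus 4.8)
The plan is to follow, essentially verbatim, the vertex-by-vertex argument used in the proof of Theorem \ref{the5.6} for (weightless) metric graphs, now carrying along the extra weight terms, and then to pass to degrees for part (ii). First I would fix a vertex $v\in V(G)$, set $v'=\phi(v)$, and compare the two sides of the claimed identity coefficientwise. From the definition of the canonical divisor of a vertex-weighted metric graph one has $K_{(\Gamma,w)}(v)=val(v)-2+2w(v)$, and from Definition \ref{def6.8} the pull-back satisfies $\phi^{*}K_{(\Gamma',w')}(v)=M_{\phi}(v)\,K_{(\Gamma',w')}(v')=M_{\phi}(v)\bigl(val(v')-2+2w'(v')\bigr)$. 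Subtracting and regrouping the terms yields
\begin{equation*}
K_{(\Gamma,w)}(v)-\phi^{*}K_{(\Gamma',w')}(v)=2\bigl(M_{\phi}(v)-1+w(v)-M_{\phi}(v)w'(v')\bigr)+\bigl(val(v)-M_{\phi}(v)\,val(v')\bigr).
\end{equation*}

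The remaining task is to rewrite the last bracket as $-\sum_{e\in E_v(G)}(U_{\phi}(e)-1)$, and this is the only place where the pseudo-harmonicity hypothesis enters. By part (ii) of Definition \ref{def6.1}, for \emph{every} edge $e'$ of $G'$ incident to $v'$ one has $M_{\phi}(v)=\sum_{e\in E_v(G),\,\phi(e)=e'}U_{\phi}(e)$. Summing this identity over the $val(v')$ edges $e'$ incident to $v'$, and using that $U_{\phi}(e)=0$ exactly for the vertical edges at $v$ (so horizontal edges are counted once and vertical ones contribute nothing), gives $M_{\phi}(v)\,val(v')=\sum_{e\in E_v(G)}U_{\phi}(e)$, hence $\sum_{e\in E_v(G)}(U_{\phi}(e)-1)=M_{\phi}(v)\,val(v')-val(v)$. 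Substituting this into the display above shows $K_{(\Gamma,w)}(v)-\phi^{*}K_{(\Gamma',w')}(v)=R_{\phi}(v)$ for every $v\in V(G)$, which is assertion (i).

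For part (ii) I would simply take degrees on both sides of the identity in (i). Since the genus of a vertex-weighted metric graph is independent of the chosen model, the Riemann-Roch theorem for vertex-weighted metric graphs recalled in Subsection 2.4 gives $\deg K_{(\Gamma,w)}=2g-2$ and $\deg K_{(\Gamma',w')}=2g'-2$, while Lemma \ref{lem6.9} gives $\deg\bigl(\phi^{*}K_{(\Gamma',w')}\bigr)=\deg(\phi)(2g'-2)$. Reading off $\deg R_{\phi}=\sum_{v\in V(G)}2\bigl(M_{\phi}(v)-1+w(v)-M_{\phi}(v)w'(v')\bigr)-\sum_{v\in V(G)}\sum_{e\in E_v(G)}(U_{\phi}(e)-1)$ directly from its definition and combining the three facts yields the displayed formula in (ii).

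The argument is essentially a computation, so there is no deep obstacle; the one point that demands care is the convention-laden identity $\sum_{e\in E_v(G)}U_{\phi}(e)=M_{\phi}(v)\,val(\phi(v))$ in the second paragraph — one must use that the models are loopless (so $val(v')$ genuinely counts the edges at $v'$ with no doubling), that vertical edges carry slope $0$, and that each horizontal edge at $v$ lies over exactly one edge at $v'$. Once this is secured, parts (i) and (ii) follow exactly as in the weightless case of Theorem \ref{the5.6}, with Definition \ref{def6.8}, Definition \ref{def6.1} and Lemma \ref{lem6.9} supplying the inputs.
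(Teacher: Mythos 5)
Your proposal is correct and follows essentially the same route as the paper's own proof: a coefficientwise comparison at each vertex using $K_{(\Gamma,w)}(v)=val(v)-2+2w(v)$ and Definition \ref{def6.8}, the identity $\sum_{e\in E_v(G)}(U_{\phi}(e)-1)=M_{\phi}(v)val(v')-val(v)$ from pseudo-harmonicity, and then Lemma \ref{lem6.9} together with degree counts for part (ii). The only difference is that you spell out the derivation of $\sum_{e\in E_v(G)}U_{\phi}(e)=M_{\phi}(v)\,val(v')$ (summing the harmonicity relation over the edges at $v'$ and noting vertical edges contribute zero), which the paper leaves implicit by citing Definition \ref{def6.1}.
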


\begin{proof}
For every $v\in V(G)$, we have $K_{(\Gamma,w)}(v)=val(v)-2+2w(v)$. Then, writing $v'={\phi}(v)$, by Definition \ref{def6.8}, we have
\begin{eqnarray*}&&
K_{(\Gamma,w)}(v)-{\phi}^*K_{({\Gamma}',w')}(v) \\&=& val(v)-2+2w(v)-M_{\phi}(v)(val(v')-2+2w'(v'))\nonumber\\
&=& 2(M_{\phi}(v)-1+w(v)-M_{\phi}(v)w'(v'))+val(v)-M_{\phi}(v)val(v').
\end{eqnarray*}
On the other hand, by Definition \ref{def6.1}, we have  $$\sum_{e\in E_v(G)}(U_{\phi}(e)-1)=\sum_{e\in E_v(G)}U_{\phi}(e)-val(v)=M_{\phi}(v)val(v')-val(v)$$ for the $v\in V(G).$ The two above identities imply  $$K_{(\Gamma,w)}(v)-{\phi}^*K_{({\Gamma}',w')}(v)=R_{\phi}(v),$$ so the conclusion (i) is proved.\par
Notice that the genus of a vertex-weighted metric graph is independent of the choice of its models. The conclusion (ii) follows immediately from Lemma \ref{lem6.9} upon computing the degrees of the divisors on both sides of the above formula.
\end{proof}

\subsection{Second main theorem on vertex-weighted  metric graphs} In this subsection, we prove the second main theorem on vertex-weighted metric graphs as follows.\par

\begin{theorem} Let $(\Gamma,w)=(G,w,\ell)$ and $({\Gamma}',w')=(G',w',{\ell}')$ be loopless vertex-weighted metric graphs. Suppose that $\phi: (G,w,\ell) \to (G',w',{\ell}')$ are a pseudo-harmonic morphism with two loopless models $(G, \ell)$ and $(G^{'}, \ell^{'})$, and have genus $g$ and $g'$, respectively. Assume that $a_1, \ldots, a_q$ are distinct vertices in $V(G').$ Set $E={\phi}^{-1}(\{a_1, \ldots, a_q\}).$ Then we have
\begin{eqnarray*}
(q+g'-1)\deg(\phi) &\leq& g-1+|E\cap V(G)|-\sum_{v\in V(G)}(w(v)-M_{\phi}(v)w'(v'))\nonumber\\
&\;&+\frac{1}{2}\sum\limits_{v\in V(G)}\sum\limits_{e\in {E(G)}}(U_{\phi}(e)-1).
\end{eqnarray*} where $v'=\phi(v)$ and $|E\cap V(G)|$ is the cardinality of $E\cap V(G).$
\end{theorem}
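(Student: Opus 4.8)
The plan is to imitate the argument used for finite graphs, vertex-weighted graphs and metric graphs in the previous sections: quantify the ``ramification over $E$'' and the ``total ramification'', observe the first is bounded by the second, and then substitute the Riemann--Hurwitz identity of Theorem \ref{the6.10}(ii) for the total ramification.

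\textbf{Step 1.} I would set
$$r_{\phi}(E):=\sum_{v\in E\cap V(G)}(M_{\phi}(v)-1),\qquad r_{\phi}(\Gamma,w):=\sum_{v\in V(G)}(M_{\phi}(v)-1),$$
and record that $r_{\phi}(E)\le r_{\phi}(\Gamma,w)$, since the vertices of $V(G)$ outside $E\cap V(G)$ contribute a nonnegative amount (here one uses $M_{\phi}(v)\ge 1$, exactly as in the earlier sections).

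\textbf{Step 2.} I would evaluate $r_{\phi}(E)$ exactly. Since $E={\phi}^{-1}(\{a_1,\dots,a_q\})$, for each fixed $a_j$ the set $E\cap V(G)$ contains \emph{all} preimages of $a_j$ in $V(G)$, so by Lemma \ref{lem6.7},
$$\sum_{v\in E\cap V(G)\atop \phi(v)=a_j}M_{\phi}(v)=\sum_{v\in V(G)\atop \phi(v)=a_j}M_{\phi}(v)=\deg(\phi).$$
Summing over $j=1,\dots,q$ and subtracting $|E\cap V(G)|$ gives $r_{\phi}(E)=q\deg(\phi)-|E\cap V(G)|$.

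\textbf{Step 3.} I would read off $r_{\phi}(\Gamma,w)$ from Theorem \ref{the6.10}(ii): dividing that identity by $2$ and transposing terms yields
$$r_{\phi}(\Gamma,w)=g-1-(g'-1)\deg(\phi)-\sum_{v\in V(G)}\left(w(v)-M_{\phi}(v)w'(v')\right)+\frac{1}{2}\sum_{v\in V(G)}\sum_{e\in E_v(G)}(U_{\phi}(e)-1).$$
Combining this with $r_{\phi}(E)\le r_{\phi}(\Gamma,w)$ and moving $(g'-1)\deg(\phi)$ to the left-hand side produces precisely the claimed inequality.

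The computation is a routine rearrangement; the only place requiring a moment's thought — and hence the closest thing to an obstacle — is making sure the hypotheses line up: Theorem \ref{the6.10}(ii) is available for \emph{pseudo}-harmonic morphisms (so no harmonicity is needed), while the step $r_{\phi}(E)\le r_{\phi}(\Gamma,w)$ is where non-degeneracy ($M_{\phi}(v)\ge 1$ for every $v$) enters, exactly as in the finite-graph, vertex-weighted and metric-graph versions.
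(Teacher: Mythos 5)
Your proposal is correct and follows essentially the same route as the paper's own proof: the same definitions of $r_{\phi}(E)$ and $r_{\phi}(\Gamma,w)$, the same exact evaluation $r_{\phi}(E)=q\deg(\phi)-|E\cap V(G)|$ via Lemma \ref{lem6.7}, and the same substitution of Theorem \ref{the6.10}(ii) for $r_{\phi}(\Gamma,w)$. Your explicit remark that the comparison $r_{\phi}(E)\leq r_{\phi}(\Gamma,w)$ uses $M_{\phi}(v)\geq 1$ is a point the paper leaves implicit (it merely calls the inequality obvious), but it does not change the argument.
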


\begin{proof}
Set $$r_{\phi}(E):=\sum_{v\in E\cap V(G)}(M_{\phi}(v)-1),$$ and $$r_{\phi}(\Gamma,w):=\sum_{v\in V(G)}(M_{\phi}(v)-1).$$ It is obvious that $$r_{\phi}(E)\leq r_{\phi}(\Gamma,w).$$ From the definition of the degree of the pseudo-harmonic morphism $\phi: (\Gamma,w)\to (\Gamma',w')$ and Lemma \ref{lem6.7} we get that $$\sum_{v\in E\cap V(G)\atop a_j=\phi(v)}M_{\phi}(v)=\sum_{v\in V(G)\atop a_j=\phi(v)}M_{\phi}(v)=\deg(\phi)$$ holds for each $ a_j\in \{a_1, \ldots, a_q\}.$ Then we have  \begin{eqnarray*}r_{\phi}(E)&=&\sum_{v\in E\cap V(G)}(M_{\phi}(v)-1)\\\nonumber&=&(\sum\limits_{j=1}^q \sum\limits_{v\in E\cap V(G)\atop a_j=\phi(v)}M_{\phi}(v))-|E\cap V(G)|\\\nonumber&=&q\deg(\phi)-|E\cap V(G)|.\end{eqnarray*}  On the other hand, by the Riemann-Hurwitz theorem for vertex-weighted metric graphs (Theorem \ref{the6.10}), we have
\begin{eqnarray*}r_{\phi}(\Gamma,w)&=&g-1-(g'-1)\deg(\phi)-\sum_{v\in V(G)}(w(v)-M_{\phi}(v)w'(v'))\\&&+\frac{1}{2}\sum\limits_{v\in V(G)}\sum_{e\in E(G)}(U_{\phi}(e)-1).\end{eqnarray*} Hence, we get the following inequality
\begin{eqnarray*}
(q+g'-1)\deg(\phi) &\leq& g-1+|E\cap V(G)|-\sum_{v\in V(G)}(w(v)-M_{\phi}(v)w'(v'))\nonumber\\
&\;&+\frac{1}{2}\sum\limits_{v\in V(G)}\sum\limits_{e\in {E_v(G)}}(U_{\phi}(e)-1).
\end{eqnarray*}
\end{proof}

We give an example that satisfies the second main theorem for vertex-weighted metric graphs.\par

\begin{example} Following Figure \ref{fig.7}, a harmonic morphism for suitable choices of lengths satisfies that $U_{\phi}(e)$ is zero for each vertical edge  and is equal to one for every horizontal edge. All weights at vertexes are labeled next to the corresponding vertexes in the figure. Then we can get that \\
$g=b_1(\Gamma)+\sum\limits_{v \in V(G)}w(v)=4,$ $g'=b_1({\Gamma}')+\sum\limits_{v \in V(G')}w'(v')=1,$ and $\deg(\phi)=2,$\par

\begin{itemize}
  \item Take $q=3$ and let $\{a_1, \ldots, a_q\}=\{v_{1}', \ldots, v_{3}'\}=V(G^{'}).$ Then one can get that $|E\cap V(G)|=6,$ $\frac{1}{2}\sum\limits_{v\in V(G)}\sum\limits_{e\in E(G)}(U_{\phi}(e)-1)=-2,$ and $\sum\limits_{v\in V(G)}(w(v)-M_{\phi}(v)w'(v'))=1,$ and by the second main theorem we get that $(3+1-1)\times 2\leq 3+6-1+2=6,$ which in fact is an equality. This means that the inequality of the second main theorem is sharp.
  \item  Take $q=2$, \begin{itemize}
                       \item Let $\{a_1, a_q\}=\{v_{1}',v_{2}'\}\subset V(G')$ (or $=\{v_{2}',v_{3}'\}$). Then $|E\cap V(G)|=4,$ $\sum\limits_{v\in V(G)}(w(v)-M_{\phi}(v)w'(v'))=1,$ and $\frac{1}{2}\sum\limits_{v\in V(G)}\sum\limits_{e\in E_{v}(G)}(U_{\phi}(e)-1)=-2,$  so we have the equality $4 \leq 3+4-1-2=4$ by the second main theorem.
                       \item Let $\{a_1, a_q\}=\{v_{1}',v_{3}'\}\subset V(G')$. Then $|E\cap V(G)|=4$ and $\sum\limits_{v\in V(G)}(w(v)-M_{\phi}(v)w'(v'))=0,$ $\frac{1}{2}\sum\limits_{v\in V(G)}\sum\limits_{e\in E_{v}(G)}(U_{\phi}(e)-1)=0,$ so we have the inequality $4 \leq 3+4=7$ by the second main theorem.
                     \end{itemize}
  \item Take $q=1$, \begin{itemize}
                      \item Let $a_q\in V(G')$ take $v_{1}'$, or, $v_{3}'$. Then $|E\cap V(G)|=2$ and $\sum\limits_{v\in V(G)}(w(v)-M_{\phi}(v)w'(v'))=0,$ $\frac{1}{2}\sum\limits_{v\in V(G)}\sum\limits_{e\in E_{v}(G)}(U_{\phi}(e)-1)=0,$ so we have the inequality $2 \leq 3+2=5$ by the second main theorem.
                      \item Let $a_q\in V(G')$ take $v_{2}'$. Then $|E\cap V(G)|=2$, and $\sum\limits_{v\in V(G)}(w(v)-M_{\phi}(v)w'(v'))=1,$ $\frac{1}{2}\sum\limits_{v\in V(G)}\sum\limits_{e\in E_{v}(G)}(U_{\phi}(e)-1)=-2$, so we have the equality $2 \leq 3+2-1-2=2$ by the second main theorem.
                    \end{itemize}
\end{itemize}
\end{example}
\begin{figure}
  \centering
  \includegraphics[width=8cm]{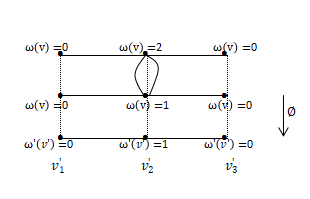}\\
  \caption{A harmonic morphism of degree two}\label{fig.7}
\end{figure}

\section{Second main theorem for metrized complexes of algebraic curves}

\subsection{Harmonic morphism between metrized complexes of algebraic curves}\par
Let $\mathfrak{C}$, $\mathfrak{C'}$ be metrized complexes of algebraic curves on an algebraically closed field $\kappa$ whose underlying vertex-weighted metric graphs are $(\Gamma, w)$ and $(\Gamma^{'}, w'),$ respectively. Without loss of generality, we assume that $(G, \ell)$ and $(G', {\ell}')$ are loopless models for metric graphs $\Gamma$ and ${\Gamma}',$ respectively, and the associated curves of $\mathfrak{C}$ and $\mathfrak{C'}$ are $\{\mathcal{C}_v\}_{v\in G}$ and  $\{\mathcal{C}_{v'}^{'}\}_{v'\in G'}$  respectively. We use the definition of harmonic morphism between metrized complexes coming from \cite{ABBR1-2015, ABBR2-2015} and therein references. In which we may let the morphism $\phi_{v}$ be a nonconstant holomorphic map between algebraic curves whenever $\kappa=\mathbb{C}.$\par

\begin{definition}\label{D7.1} A harmonic morphism $\varphi=(\phi, \{\phi_v\}_{v\in V(G)})$ between metrized complexes $\mathfrak{C}$, $\mathfrak{C'}$ consists of a harmonic morphism ${\phi}: (\Gamma, w) \to (\Gamma', w')$ of vertex-weighted metric graphs,  and for every vertex $v\in V(G)$ of $\Gamma$ with $M_{\phi}(v)>0$ a finite morphism of algebraic curves ${\phi}_v: {\mathcal{C}}_v\to {{\mathcal{C}'}_{\phi(v)}}$, satisfying the following compatibility conditions: \par

(i) For every vertex $v\in V(G)$ and every edge $e\in E_v(G)$ with $U_{\phi}(e)>0$, have ${\phi_v}(red_v(e))=red_{\phi(v)}(U_{\phi(v)}(e)),$ where $U_{\phi (v)}$ is a map induced by $\phi$, $$U_{\phi(v)}: \{e\in E_v(G): U_{\phi}(e) \neq 0\}\to E_{v'}(G').$$\par

(ii) For every vertex $v\in V(G)$ and every edge $e\in E_v(G)$ with $U_{\phi}(e)>0$, the ramification index of $\phi_v$ at the marked point corresponding to the edge $e$ is equal to $U_{\phi}(e)$.\par
(iii) For every vertex $v\in V(G)$ with $M_{\phi}(v)>0$, every $e' \in E_{\phi(v)}(G')$, and every point $x\in {\phi}^{-1}_{v}(red_{\phi(v)}(e')) \subset {\mathcal{C}}_v,$ there exists $e\in E_v(G)$ such that $red_v(e)=x$.\par
(iv) For every vertex $v\in V(G)$ with $M_{\phi}(v)>0$ we have $M_{\phi}(v)$ = $\deg(\phi_v).$\end{definition}\par

Next we give the definition of the degree for harmonic morphisms as follows.\par

\begin{definition}\label{def7.2}
Let $\varphi=(\phi, \{\phi_v\}_{v\in V(G)})$ be a harmonic morphism of metrized complexes $\mathfrak{C}$, $\mathfrak{C'}.$  For any vertex $v\in V(G)$, the degree of a harmonic morphism $\varphi$ is defined to be equal to the degree of $\phi,$ that is
$$\deg(\varphi):=\deg(\phi)=\sum_{e\in E(G)\atop \phi(e)=e'}U_{\phi}(e)$$ for any $e'\in E(G').$
\end{definition}

Then by Proposition \ref{pro5.3}, for any vertex $v'\in V(G')$ we have the formula \begin{equation}\label{EQ1}\deg(\varphi)=\deg(\phi)=\sum_{v\in V(G)\atop \phi(v)=v'}M_{\phi}(v)=\sum_{v\in V(G)\atop \phi(v)=v'}\deg(\phi_v).\end{equation}

Let $v'$ be a vertex in $G'$ and $x'$ be a point in the associated curve $\{\mathcal{C}_{v'}^{'}\}$ of $v'$ in $\mathfrak{C'}$. Let $E_{x'}$ be the degree one effective divisor on $\mathfrak{C'}$ whose only supporting point is $x'$. We give the definition of pullback divisor according to \cite{LM-2018}.\par

\begin{definition} \label{def7.4} The pullback divisor $\varphi^*(E_{x'})\in Div(\mathfrak{C})$ of $E_{x'}$ is defined as follows:
 \begin{itemize}
   \item the $\Gamma$-part of $\varphi^*(E_{x'})$ is the pullback divisor $\phi^*((v'))\in Div(\Gamma)$ of the divisor $(v')\in Div({\Gamma}')$,
   \item the ${\mathcal{C}}_v$-part of $\varphi^*(E_{x'})$ is the pullback divisor $\phi_{v}^*((x'))\in Div({\mathcal{C}}_v)$ of the divisor $(x')\in Div(\mathcal{C}_{v'}^{'})$ if $v\in {\phi}^{-1}(v')$,
   \item the ${\mathcal{C}}_v$-part of $\varphi^*(E_{x'})$ is 0 if $v\notin {\phi}^{-1}(v')$.
 \end{itemize}
\end{definition}
Note that the properties of harmonic morphisms guarantee that $\varphi^*(E_{x'})$ is a well-defined divisor on $\mathfrak{C}$. We may also simply call the pullback divisor of $E_{x'}$ as the pullback divisor of the point $x'$ sometimes. Moreover, by letting $\varphi^{*}$ preserve linear combinations, we can also naturally associate a pullback divisor $\varphi^{*}(D')$ on $\mathfrak{C}$ to all divisors $D'$ on $\mathfrak{C'}.$\par

For any $v^{'}\in V(G^{'}),$ let $x^{'}\in \mathcal{A}_{v^{'}}.$ Then we can write $$\varphi^*(x')=\sum_{v\in {\phi}^{-1}(v')}\phi_{v}^{*}(x')=\sum_{v\in {\phi}^{-1}(v')}\sum_{x\in\phi_{v}^{-1}(x^{'})\atop v\in e}U_{\phi}(e)(x),$$ where $U_{\phi}(e)$ is equal to the ramification index of $\phi_v$ at the marked point $x$ corresponding to the edge $e\ni v.$
Hence,
$$\deg(\varphi^*(x'))=\sum_{v\in {\phi}^{-1}(v')}\sum_{x\in\phi_{v}^{-1}(x^{'})\atop v\in e}U_{\phi}(e)=\sum_{v\in V(G)\atop \phi(v)=v'}M_{\phi}(v)=\deg(\phi)=\deg(\varphi).$$
Since the $A_{v^{'}}$ is the divisor consisting of the sum of $val(v^{'})$ points in $\mathcal{A}_{v^{'}},$ we get the result.\par

\begin{proposition}\label{P7.4} For the divisor $A_{v^{'}},$ we have $$\deg(\varphi^*(A_{v^{'}}))=\deg(\varphi)\deg(A_{v^{'}}).$$
\end{proposition}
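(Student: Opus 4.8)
The plan is to reduce the statement to the single-point computation carried out in the lines immediately preceding the proposition, and then pass to the divisor $A_{v'}$ by linearity of the pullback. Recall that, by Definition \ref{D2.13}, $A_{v'}$ is the reduced divisor $A_{v'}=\sum_{x'\in\mathcal{A}_{v'}}(x')$ on the curve $\mathcal{C}_{v'}^{'}$, so that $\deg(A_{v'})=|\mathcal{A}_{v'}|=val(v')$. Viewing each point $x'\in\mathcal{A}_{v'}$ as the degree-one effective divisor $E_{x'}$ on $\mathfrak{C'}$ supported at $x'$, we regard $A_{v'}$ as the divisor $\sum_{x'\in\mathcal{A}_{v'}}E_{x'}$ on $\mathfrak{C'}$, to which $\varphi^{*}$ applies via Definition \ref{def7.4}.

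First I would invoke the fact that $\varphi^{*}$ preserves linear combinations (stated in the paragraph following Definition \ref{def7.4}), which gives $\varphi^{*}(A_{v'})=\sum_{x'\in\mathcal{A}_{v'}}\varphi^{*}(E_{x'})$. Taking degrees and using additivity of $\deg$ on $Div(\mathfrak{C})$, one obtains $\deg(\varphi^{*}(A_{v'}))=\sum_{x'\in\mathcal{A}_{v'}}\deg(\varphi^{*}(E_{x'}))$. Now the displayed identity established just before the statement, namely $\deg(\varphi^{*}(x'))=\deg(\varphi)$ for every $x'\in\mathcal{A}_{v'}$, applies to each summand; summing over the $val(v')$ points of $\mathcal{A}_{v'}$ yields $\deg(\varphi^{*}(A_{v'}))=val(v')\deg(\varphi)=\deg(A_{v'})\deg(\varphi)$, which is the claim.

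The genuine content has already been absorbed into the per-point equality $\deg(\varphi^{*}(x'))=\deg(\varphi)$: that equality uses that the $\mathcal{C}_v$-part of $\varphi^{*}(x')$ is $\phi_{v}^{*}(x')$ together with the classical fact $\deg(\phi_{v}^{*}(x'))=\deg(\phi_v)$ for a finite morphism of smooth curves, condition $(iv)$ of Definition \ref{D7.1} ($\deg(\phi_v)=M_{\phi}(v)$), and Proposition \ref{pro5.3} (equivalently Lemma \ref{lem6.7}) to sum $\sum_{v\in\phi^{-1}(v')}M_{\phi}(v)=\deg(\phi)$. Consequently there is no serious obstacle; the only point requiring a little care is to apply $\varphi^{*}$ point by point and extend linearly, so that the well-definedness noted after Definition \ref{def7.4} is in force for each $E_{x'}$ before the degrees are added.
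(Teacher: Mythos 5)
Your proof is correct and follows essentially the same route as the paper: both reduce to the per-point identity $\deg(\varphi^{*}(x'))=\deg(\varphi)$ for $x'\in\mathcal{A}_{v'}$ and then sum over the $val(v')$ points of $\mathcal{A}_{v'}$ by linearity. The only cosmetic difference is that the paper verifies the per-point identity by expanding $\phi_v^{*}(x')$ explicitly as $\sum U_{\phi}(e)(x)$ over marked preimages and recognizing $\sum_{e}U_{\phi}(e)=M_{\phi}(v)$, whereas you invoke $\deg(\phi_v^{*}(x'))=\deg(\phi_v)=M_{\phi}(v)$ via condition (iv) of Definition \ref{D7.1}; these amount to the same thing.
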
\par

\begin{lemma} \label{lem7.5}Let $\varphi:\mathfrak{C}\to \mathfrak{C}'$ be a harmonic morphism. Then for a canonical divisor $\mathcal{K}_{(\mathfrak{C}^{'}, w')},$ we have $$\deg({\varphi}^*(\mathcal{K}_{(\mathfrak{C}^{'}, w')}))=\deg(\varphi)\deg(\mathcal{K}_{(\mathfrak{C}^{'}, w')}).$$ \end{lemma}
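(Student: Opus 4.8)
\textbf{Proof proposal for Lemma \ref{lem7.5}.}
The plan is to use the additivity of the pullback $\varphi^{*}$, decompose $\mathcal{K}_{(\mathfrak{C'},w')}$ into the three natural kinds of pieces appearing in Definition \ref{D2.13}, and evaluate the degree of the pullback of each piece by the formulas already established above. Throughout I use that for any divisor $D$ on a metrized complex one has $\deg(D)=\deg(D_{\Gamma})$, so only the degrees of the $\mathcal{C}_{v}$-parts and of the $\Gamma$-part after pullback matter.

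Since $\varphi^{*}$ preserves linear combinations, write
$$\mathcal{K}_{(\mathfrak{C'},w')}=\sum_{v'\in V(G')}K_{v'}+\sum_{v'\in V(G')}A_{v'}+\sum_{v'\in V(G')}2w'(v')(v').$$
For the second sum, Proposition \ref{P7.4} (together with the computation preceding it) gives $\deg(\varphi^{*}(A_{v'}))=\deg(\varphi)\deg(A_{v'})=\deg(\varphi)\,val(v')$ for each $v'$. For the last sum, each $(v')$ is a vertex, so by Definition \ref{def7.4} the pullback $\varphi^{*}((v'))$ has $\Gamma$-part $\phi^{*}((v'))=\sum_{v\in\phi^{-1}(v')}M_{\phi}(v)(v)$ and no curve part, whence $\deg(\varphi^{*}((v')))=\sum_{v\in\phi^{-1}(v')}M_{\phi}(v)=\deg(\varphi)$ by Lemma \ref{lem6.7}; thus $\deg(\varphi^{*}(2w'(v')(v')))=2w'(v')\deg(\varphi)$.

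For the first sum, fix $v'\in V(G')$. By Definition \ref{def7.4}, extended by linearity to the divisor $K_{v'}$ on $\mathcal{C}_{v'}'$ (whose support lies entirely on $\mathcal{C}_{v'}'$), the $\mathcal{C}_{v}$-part of $\varphi^{*}(K_{v'})$ equals $\phi_{v}^{*}(K_{v'})$ for every $v\in\phi^{-1}(v')$ with $M_{\phi}(v)>0$ and is $0$ otherwise, and there is no non-vertex graphical contribution. Since $\phi_{v}\colon\mathcal{C}_{v}\to\mathcal{C}_{v'}'$ is a finite morphism of smooth projective curves with $\deg(\phi_{v})=M_{\phi}(v)$ by Definition \ref{D7.1}(iv), the classical fact that pullback of divisors under a degree-$d$ finite morphism multiplies degrees by $d$ yields $\deg(\phi_{v}^{*}(K_{v'}))=M_{\phi}(v)\deg(K_{v'})=M_{\phi}(v)(2g_{v'}-2)$. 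Summing over the fibre and invoking $\sum_{v\in V(G),\,\phi(v)=v'}M_{\phi}(v)=\deg(\phi)=\deg(\varphi)$ (Lemma \ref{lem6.7}, cf.\ \eqref{EQ1}) gives $\deg(\varphi^{*}(K_{v'}))=\deg(\varphi)(2g_{v'}-2)$; vertices with $M_{\phi}(v)=0$ contribute nothing and cause no trouble.

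Adding the three contributions,
$$\deg(\varphi^{*}(\mathcal{K}_{(\mathfrak{C'},w')}))=\deg(\varphi)\sum_{v'\in V(G')}\bigl(2g_{v'}-2+val(v')+2w'(v')\bigr)=\deg(\varphi)\deg(\mathcal{K}_{(\mathfrak{C'},w')}),$$
the last equality being the degree computation for $\mathcal{K}_{(\mathfrak{C'},w')}$ recorded after Definition \ref{D2.13}. I expect the only delicate point to be the bookkeeping verifying that Definition \ref{def7.4}, once extended by linearity, behaves as claimed on divisors supported on a single curve $\mathcal{C}_{v'}'$ (i.e.\ is compatible with $\deg(D)=\deg(D_{\Gamma})$ and contributes no non-vertex graphical part); everything else is an immediate application of Proposition \ref{P7.4}, Lemma \ref{lem6.7}, and the degree formula for finite morphisms of algebraic curves.
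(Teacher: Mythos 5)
Your proposal is correct and follows essentially the same route as the paper: decompose $\mathcal{K}_{(\mathfrak{C'},w')}$ into the $K_{v'}$, $A_{v'}$, and $2w'(v')(v')$ pieces, pull back each by linearity, and compute degrees using the classical degree formula for finite morphisms of curves, Proposition \ref{P7.4}, and the fibre formula $\sum_{v\in\phi^{-1}(v')}M_{\phi}(v)=\deg(\varphi)$. The only (immaterial) difference is that you identify the resulting sum $\sum_{v'}(2g_{v'}-2+val(v')+2w'(v'))$ with $\deg(\mathcal{K}_{(\mathfrak{C'},w')})$ directly, while the paper expands it through $2|E(G')|$ and the genus formula before reassembling.
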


\begin{proof}By Definition \ref{D7.1} and Definition \ref{def5.4}, we obtain
\begin{eqnarray*}&&\varphi^{*}(\mathcal{K}_{(\mathfrak{C}^{'}, w')})\\&=&\varphi^{*}\left(\sum_{v^{'} \in V(G^{'})}(K_{v^{'}}+A_{v^{'}}+2w'(v')(v'))\right)\\&=&\sum_{v^{'} \in V(G^{'})}\sum_{v\in V(G)\atop \phi(v)=v^{'}}\phi_{v}^{*}K_{v^{'}}+\sum_{v^{'} \in V(G^{'})}\sum_{v\in V(G)\atop \phi(v)=v^{'}}\phi_{v}^{*}A_{v^{'}}+\sum_{v^{'} \in V(G^{'})}2w'(v')\phi^{*}(v')
\\&=&\sum_{v^{'} \in V(G^{'})}\sum_{v\in V(G)\atop \phi(v)=v^{'}}\phi_{v}^{*}K_{v^{'}}+\sum_{v^{'} \in V(G^{'})}\sum_{v\in V(G)\atop \phi(v)=v^{'}}\phi_{v}^{*}A_{v^{'}}+\sum_{v^{'} \in V(G^{'})}2w'(v')\sum_{v\in V(G)\atop \phi(v)=v^{'}}M_{\phi}(v)(v)\\
&=&\sum_{v^{'} \in V(G^{'})}\sum_{v\in V(G)\atop \phi(v)=v^{'}}\phi_{v}^{*}K_{v^{'}}+\sum_{v^{'} \in V(G^{'})}\sum_{v\in V(G)\atop \phi(v)=v^{'}}\phi_{v}^{*}A_{v^{'}}+\sum_{v^{'} \in V(G^{'})}2w'(v')\deg(\phi)(v).\end{eqnarray*}
Then it follows from \eqref{EQ1} and Proposition \ref{P7.4} that
\begin{eqnarray*}&&\deg(\varphi^{*}(\mathcal{K}_{(\mathfrak{C}^{'}, w')}))\\
&=&\sum_{v^{'} \in V(G^{'})}\sum_{v\in V(G)\atop \phi(v)=v^{'}}\deg(\phi_{v}^{*}K_{v^{'}})+\sum_{v^{'} \in V(G^{'})}\sum_{v\in V(G)\atop \phi(v)=v^{'}}\deg(\phi_{v}^{*}A_{v^{'}})+\deg(\phi)\sum_{v^{'} \in V(G^{'})}2w'(v')\\
&=&\sum_{v^{'} \in V(G^{'})}\sum_{v\in V(G)\atop \phi(v)=v^{'}}\deg(\phi_{v})\deg(K_{v^{'}})+\sum_{v^{'} \in V(G^{'})}\sum_{v\in V(G)\atop \phi(v)=v^{'}}\deg(\phi_{v}^{*}A_{v^{'}})+\deg(\phi)\sum_{v^{'} \in V(G^{'})}2w'(v')\\
&=&\deg(\varphi)\sum_{v^{'} \in V(G^{'})}\deg(K_{v^{'}})+\deg(\varphi)\sum_{v^{'} \in V(G^{'})}\deg(A_{v^{'}})+\deg(\phi)\sum_{v^{'} \in V(G^{'})}2w'(v')\\
&=&\deg(\varphi)\sum_{v^{'} \in V(G^{'})}(2g_{v^{'}}-2)+\deg(\varphi)2|E(G^{'})|+\deg(\phi)\sum_{v^{'} \in V(G^{'})}2w'(v')\\
&=&\deg(\varphi)\left(2\left(|E(G^{'})|-|V(G^{'})|+1\right)+2\sum_{v^{'}\in V(G^{'})}g_{v^{'}}-2+\sum_{v^{'} \in V(G^{'})}2w'(v')\right)\\
&=&\deg(\varphi)\left(2g(\Gamma^{'})+2\sum_{v^{'}\in V(G^{'})}g_{v^{'}}-2+\sum_{v^{'} \in V(G^{'})}2w'(v')\right)\\
&=&\deg(\varphi)\left(2g(\mathfrak{C}^{'})-2+\sum_{v^{'} \in V(G^{'})}2w'(v')\right)\\
&=&\deg(\varphi)\left(2g(\mathfrak{C}^{'}, w')-2\right)\\
&=&\deg(\varphi)\deg(\mathcal{K}_{(\mathfrak{C}^{'}, w')}).\end{eqnarray*}
\end{proof}

\subsection{Riemann-Hurwitz theorem for metric complex of algebraic curves} Now we consider the Riemann-Hurwitz theorem for harmonic morphisms on metric complexes of algebraic curves.\par

\begin{theorem}\label{the7.6} Let $\mathfrak{C}$, $\mathfrak{C'}$ be metrized complexes of algebraic curves on $\kappa$ with genus $g(\mathfrak{C})$ and $g(\mathfrak{C'})$, respectively. The underlying vertex-weighted metric graphs of $\mathfrak{C}$ and $\mathfrak{C'}$ are $(\Gamma, w)$ and $({\Gamma}', w')$ respectively, suppose $(G, \ell)$ and $(G', {\ell}')$ are loopless models for metric graphs $\Gamma$ and ${\Gamma}'$ respectively, and $\varphi=(\phi, \{\phi_v\}_{v\in G})$ be a harmonic morphism. Then, the canonical divisors on  $\mathfrak{C}$ and  $\mathfrak{C'}$ are related by the formula
$$\mathcal{K}_{(\mathfrak{C}, w)}={\varphi}^*\mathcal{K}_{(\mathfrak{C'}, w')}+R_{\varphi},$$
where
\begin{eqnarray*}
R_{\varphi}&:=&\sum_{v\in V(G)}\left(K_v+A_v-\varphi^{*}K_{v'}-\varphi^{*}A_{v^{'}}\right)+\sum_{v\in V(G)}2\left(w(v)-w'(v')M_{\phi}(v)\right)(v),
\end{eqnarray*} and $v'=\phi(v).$ In addition,
\begin{eqnarray*}
2g(\mathfrak{C})-2 &=& \deg(\varphi)(2g(\mathfrak{C'})-2)+\sum_{v\in V(G)}2\left(M_{\phi}(v)-1+g_v-M_{\phi}(v)g_{v'}\right)\\&&+\sum_{v\in V(G)}\left(val(v)-M_{\phi}(v)val(v')\right)+\sum_{v\in V(G)}2\left(w(v)-w'(v')M_{\phi}(v)\right).
\end{eqnarray*}
\end{theorem}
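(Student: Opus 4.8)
The plan is to derive the divisorial identity by a direct expansion of the two canonical divisors together with the definition of the pullback, and then to obtain the numerical identity simply by taking degrees and invoking Lemma \ref{lem7.5}.

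For the divisorial identity, I would begin from Definition \ref{D2.13}, which gives
$$\mathcal{K}_{(\mathfrak{C}, w)}=\sum_{v\in V(G)}(K_v+A_v)+\sum_{v\in V(G)}2w(v)(v).$$
Next I would compute $\varphi^{*}\mathcal{K}_{(\mathfrak{C'}, w')}$ by applying $\varphi^{*}$ term by term to
$$\mathcal{K}_{(\mathfrak{C'}, w')}=\sum_{v'\in V(G')}(K_{v'}+A_{v'})+\sum_{v'\in V(G')}2w'(v')(v').$$
By Definition \ref{def7.4} and the linearity of $\varphi^{*}$, the pieces $K_{v'}$ and $A_{v'}$ supported on $\mathcal{C}_{v'}$ pull back, on each curve $\mathcal{C}_v$ with $\phi(v)=v'$, to $\phi_v^{*}K_{v'}$ and $\phi_v^{*}A_{v'}$, while the vertex divisor $(v')$ pulls back to $\phi^{*}((v'))=\sum_{v\colon \phi(v)=v'}M_{\phi}(v)(v)$. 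This is exactly the expansion already carried out in the proof of Lemma \ref{lem7.5}; re-indexing the resulting double sums over $v\in V(G)$ (each $v$ having the unique image $v'=\phi(v)$) yields
$$\varphi^{*}\mathcal{K}_{(\mathfrak{C'}, w')}=\sum_{v\in V(G)}\left(\phi_v^{*}K_{v'}+\phi_v^{*}A_{v'}\right)+\sum_{v\in V(G)}2w'(v')M_{\phi}(v)(v).$$
Subtracting this from $\mathcal{K}_{(\mathfrak{C}, w)}$ and collecting the $\mathcal{C}_v$-parts and the vertex parts separately gives exactly $R_{\varphi}$ as stated, with $\varphi^{*}K_{v'}$ and $\varphi^{*}A_{v'}$ read as the $\mathcal{C}_v$-parts $\phi_v^{*}K_{v'}$ and $\phi_v^{*}A_{v'}$; as usual this is an identity of divisor classes, or of actual divisors once representatives $K_v$ and $K_{v'}$ of the canonical classes are fixed. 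I would also note that $\varphi^{*}\mathcal{K}_{(\mathfrak{C'}, w')}$ is a genuine divisor on $\mathfrak{C}$, since $\deg(\phi_v^{*}(x'))=\deg(\phi_v)=M_{\phi}(v)$ matches the coefficient of $v$ in $\phi^{*}((v'))$, which is the compatibility recorded right after Definition \ref{def7.4}.

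For the numerical identity I would take degrees in the identity just proved. On the left, $\deg\mathcal{K}_{(\mathfrak{C}, w)}=2g(\mathfrak{C}, w)-2=2g(\mathfrak{C})-2+2\sum_{v\in V(G)}w(v)$ by Subsection 2.5 and Definition \ref{D2.12}. By Lemma \ref{lem7.5}, $\deg(\varphi^{*}\mathcal{K}_{(\mathfrak{C'}, w')})=\deg(\varphi)\deg\mathcal{K}_{(\mathfrak{C'}, w')}=\deg(\varphi)\bigl(2g(\mathfrak{C'})-2+2\sum_{v'\in V(G')}w'(v')\bigr)$. For $\deg R_{\varphi}$, I would use $\deg K_v=2g_v-2$, $\deg A_v=val(v)$, and---since $\deg\phi_v=M_{\phi}(v)$ by Definition \ref{D7.1}(iv) and degree is multiplicative under pullback of divisors on curves (as in the proof of Lemma \ref{lem7.5}, and Proposition \ref{P7.4})---$\deg(\phi_v^{*}K_{v'})=M_{\phi}(v)(2g_{v'}-2)$ and $\deg(\phi_v^{*}A_{v'})=M_{\phi}(v)val(v')$, so that
$$\deg R_{\varphi}=\sum_{v\in V(G)}\left[(2g_v-2)-M_{\phi}(v)(2g_{v'}-2)+val(v)-M_{\phi}(v)val(v')\right]+\sum_{v\in V(G)}2\bigl(w(v)-w'(v')M_{\phi}(v)\bigr).$$
Rearranging the bracket via $(2g_v-2)-M_{\phi}(v)(2g_{v'}-2)=2\bigl(M_{\phi}(v)-1+g_v-M_{\phi}(v)g_{v'}\bigr)$ and using $\sum_{v\colon\phi(v)=v'}M_{\phi}(v)=\deg(\varphi)$ to reconcile the weight contributions appearing on the two sides, one collects everything into the asserted identity for $2g(\mathfrak{C})-2$.

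Every individual step is elementary; the place demanding the most care is the bookkeeping between the weighted genera $g(\mathfrak{C}, w),g(\mathfrak{C'}, w')$ and the unweighted ones $g(\mathfrak{C}),g(\mathfrak{C'})$, since the statement absorbs the weight contributions $2(w(v)-w'(v')M_{\phi}(v))$ into $R_{\varphi}$ while recording the genera on the two sides without weights, so one must track carefully how the terms $2\sum_v w(v)$ and $2\deg(\varphi)\sum_{v'}w'(v')$ enter and combine. A second, minor point is checking that $\varphi^{*}$ of any divisor on $\mathfrak{C'}$ is well defined on $\mathfrak{C}$ --- that the degree of a pullback equals the degree of its $\Gamma$-part --- which is guaranteed by the matching of $\phi^{*}((v'))=\sum_{\phi(v)=v'}M_{\phi}(v)(v)$ with $\deg(\phi_v^{*}(x'))=M_{\phi}(v)$ noted after Definition \ref{def7.4}.
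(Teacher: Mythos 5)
Your proposal follows essentially the same route as the paper: the divisorial identity is obtained by comparing $\mathcal{K}_{(\mathfrak{C},w)}$ with $\varphi^{*}\mathcal{K}_{(\mathfrak{C'},w')}$ curve by curve and vertex by vertex via Definition \ref{def7.4}, and the numerical identity by taking degrees and invoking Lemma \ref{lem7.5} together with $\deg K_{v'}=2g_{v'}-2$, $\deg A_{v'}=val(v')$ and $\deg\phi_v=M_{\phi}(v)$. One remark on your last step: the weighted-versus-unweighted bookkeeping you rightly single out as the delicate point in fact shows that the displayed numerical identity holds with the \emph{weighted} genera $g(\mathfrak{C},w)$ and $g(\mathfrak{C'},w')$ --- which is exactly what the paper's own proof derives, in the form $2g(\mathfrak{C},w)-2-\deg(\varphi)\bigl(2g(\mathfrak{C'},w')-2\bigr)=\deg R_{\varphi}$ --- whereas with the unweighted genera $g(\mathfrak{C})$, $g(\mathfrak{C'})$ as printed, the contribution $\sum_{v\in V(G)}2\bigl(w(v)-w'(v')M_{\phi}(v)\bigr)$ cancels against $2\sum_v w(v)-2\deg(\varphi)\sum_{v'}w'(v')$ and must be dropped (consistently with the weightless Remark that follows the theorem). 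So carrying out the reconciliation you describe lands on this corrected form rather than on the literal printed identity; your method is sound and matches the paper's.
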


\begin{remark}If the underlying metric graphs $\Gamma$ and ${\Gamma}'$  have no weights at all vertexes in Theorem \ref{the7.6}, then it
reduces that $$\mathcal{K}_{\mathfrak{C}}={\varphi}^*\mathcal{K}_{\mathfrak{C'}}+R_{\varphi},$$
where $R_{\varphi}=\sum_{v\in V(G)}\left(K_v+A_v-\varphi^{*}K_{v'}-\varphi^{*}A_{v^{'}}\right).$  In addition,
\begin{eqnarray*}
2g(\mathfrak{C})-2 &=& \deg(\varphi)(2g(\mathfrak{C'})-2)+\sum_{v\in V(G)}2\left(M_{\phi}(v)-1+g_v-M_{\phi}(v)g_{v'}\right)\\&&+\sum_{v\in V(G)}\left(val(v)-M_{\phi}(v)val(v')\right).
\end{eqnarray*}
\end{remark}

\begin{proof}
For every $v\in V(G)$ we have $${\mathcal{K}}_{(\mathfrak{C}, w)}(v)=K_v+A_v+2w(v)(v),$$ where $K_v$ is a canonical divisor on $\mathcal{C}_v,$  $A_v$ is the divisor consisting of the sum of the $val(v)$ points in $\mathcal{A}_v, $ and $w(v)$ is the weight at vertex $v\in V(G).$ Let $v^{'}=\phi(v).$ Then for each $v\in V(G),$ we have \begin{eqnarray*}&& {\mathcal{K}}_{(\mathfrak{C}, w)}(v)-\varphi^{*}(\mathcal{K}_{(\mathfrak{C^{'}}, w')})(v)\\ &=&\left(K_v+A_v+2w(v)(v)\right)-\left(\varphi^{*}K_{v'}+\varphi^{*}A_{v^{'}}+2w'(v')\phi^{*}(v')\right)
\\\nonumber&=&\left(K_v+A_v+2w(v)(v)\right)-\left(\varphi^{*}K_{v'}+\varphi^{*}A_{v^{'}}+2w'(v')M_{\phi}(v)(v)\right)\\&=&R_{\varphi}.\end{eqnarray*} Hence , we obtain $\mathcal{K}_{(\mathfrak{C}, w)}={\varphi}^*\mathcal{K}_{(\mathfrak{C'}, w')}+R_{\varphi}.$\par

Now by Lemma \ref{lem7.5},  it follows that
\begin{eqnarray*}&&
2g(\mathfrak{C}, w)-2 - \deg(\varphi)(2g(\mathfrak{C'}, w')-2)\\
&=&\deg(\mathcal{K}_{(\mathfrak{C}, w)})-\deg(\varphi)\deg(\mathcal{K}_{(\mathfrak{C}^{'}, w')})\\
&=&\deg(\mathcal{K}_{(\mathfrak{C}, w)})-\deg({\varphi}^*\mathcal{K}_{(\mathfrak{C'}, w')})\\
&=&\deg(R_{\varphi})\\
&=&\sum_{v\in V(G)}\left(\deg(K_v)+\deg(A_v)+2w(v)\right)\\&&-\sum_{v\in V(G)}\left(\deg(\phi_{v}^{*}K_{v'})+\deg(\phi_{v}^{*}A_{v^{'}})+2w'(v')M_{\phi}(v)\right)\\
&=&\sum_{v\in V(G)}\left(\deg(K_v)+\deg(A_v)+2w(v)\right)\\&&-\sum_{v\in V(G)}\left(\deg(\phi_{v})\deg(K_{v'})+\deg(\phi_{v})\deg(A_{v^{'}})+2w'(v')M_{\phi}(v)\right)\\
&=&\sum_{v\in V(G)}\left(2g_v-2+val(v)+2w(v)\right)-\sum_{v\in V(G)}M_{\phi}(v)\left(2g_{v^{'}}-2+val(v^{'})\right)\\&&-\sum_{v\in V(G)}2w'(v')M_{\phi}(v)\nonumber\\
&=& \sum_{v\in V(G)}2\left(M_{\phi}(v)-1+g_v-M_{\phi}(v)g_{v'}+w(v)-w'(v')M_{\phi}(v)\right)\\&&+\sum_{v\in V(G)}\left(val(v)-M_{\phi}(v)val(v')\right).
\end{eqnarray*} We complete the proof of this theorem.
\end{proof}

\subsection{Second main theorem for metric complexes of algebraic curves}
In the final subsection, we obtain the second main theorem of harmonic morphisms on metric complexes of algebraic curves.\par

\begin{theorem}\label{T7.7} Let $\mathfrak{C}$, $\mathfrak{C'}$ be metrized complexes of algebraic curves over $\kappa,$ the underlying vertex-weighted metric graphs of $\mathfrak{C}$ and $\mathfrak{C'}$ are $(\Gamma, w)$ and $({\Gamma}', w')$ respectively, suppose $(G, \ell)$ and $(G', {\ell}')$ are loopless models for metric graphs $\Gamma$ and ${\Gamma}'$ respectively, and $\varphi=(\phi, \{\phi_v\}_{v\in G})$ be a harmonic morphism. Suppose that $a_1, \ldots , a_q\in V(G')$ are distinct vertices. Let $E={\phi}^{-1}(\{a_1, \ldots, a_q\}).$ Then we have
\begin{eqnarray*}
(q+g(\mathfrak{C'}, w')-1)\deg(\varphi)&\leq& g(\mathfrak{C}, w)-1+|E\cap V(G)|-\sum_{v\in V(G)}\left(g_v-M_{\phi}(v)g_{v^{'}}\right)\nonumber\\&&-\sum_{v\in V(G)}\left(w(v)-w'(v')M_{\phi}(v)\right)\\
&\;&-\frac{1}{2}\sum_{v\in V(G)}(val(v)-M_{\phi}(v)val(v')),
\end{eqnarray*}
where $v'=\phi(v)$ and $|E\cap V(G)|$ is the cardinality of $E\cap V(G).$
\end{theorem}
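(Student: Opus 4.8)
The plan is to reproduce, at the level of metrized complexes, the same argument scheme used for the second main theorems on finite graphs, vertex-weighted graphs, and (vertex-weighted) metric graphs in the earlier sections; the only genuinely new ingredient is the Riemann--Hurwitz theorem for metrized complexes (Theorem~\ref{the7.6}), replacing its graph-theoretic predecessors. First I would introduce the two ramification-type sums
\[
r_{\varphi}(E):=\sum_{v\in E\cap V(G)}\bigl(M_{\phi}(v)-1\bigr),\qquad
r_{\varphi}(\mathfrak{C},w):=\sum_{v\in V(G)}\bigl(M_{\phi}(v)-1\bigr),
\]
and record the trivial inequality $r_{\varphi}(E)\le r_{\varphi}(\mathfrak{C},w)$, which holds as soon as $M_{\phi}(v)\ge 1$ for every $v\in V(G)$, i.e.\ for $\varphi$ nondegenerate. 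The proof then consists of computing $r_{\varphi}(E)$ exactly, bounding $r_{\varphi}(\mathfrak{C},w)$ from above by Riemann--Hurwitz, and comparing.

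For $r_{\varphi}(E)$ I would invoke the fibre-counting identity \eqref{EQ1} (a consequence of Proposition~\ref{pro5.3}): because $E\cap V(G)$ contains \emph{every} vertex lying over one of the $a_{j}$, we have, for each $j=1,\dots,q$,
\[
\sum_{v\in E\cap V(G),\,\phi(v)=a_{j}}M_{\phi}(v)=\sum_{v\in V(G),\,\phi(v)=a_{j}}M_{\phi}(v)=\deg(\varphi).
\]
Summing over $j$ and subtracting the $|E\cap V(G)|$ vertices each counted once gives $r_{\varphi}(E)=q\deg(\varphi)-|E\cap V(G)|$.

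For $r_{\varphi}(\mathfrak{C},w)$ I would start from the degree identity established inside the proof of Theorem~\ref{the7.6} (using Lemma~\ref{lem7.5}), namely
\[
2g(\mathfrak{C},w)-2=\deg(\varphi)\bigl(2g(\mathfrak{C'},w')-2\bigr)+\sum_{v\in V(G)}2\bigl(M_{\phi}(v)-1+g_{v}-M_{\phi}(v)g_{v'}+w(v)-w'(v')M_{\phi}(v)\bigr)+\sum_{v\in V(G)}\bigl(val(v)-M_{\phi}(v)val(v')\bigr),
\]
and solve it for $\sum_{v\in V(G)}2\bigl(M_{\phi}(v)-1\bigr)$; dividing by $2$ yields
\[
r_{\varphi}(\mathfrak{C},w)=g(\mathfrak{C},w)-1-\bigl(g(\mathfrak{C'},w')-1\bigr)\deg(\varphi)-\sum_{v\in V(G)}\bigl(g_{v}-M_{\phi}(v)g_{v'}\bigr)-\sum_{v\in V(G)}\bigl(w(v)-w'(v')M_{\phi}(v)\bigr)-\frac{1}{2}\sum_{v\in V(G)}\bigl(val(v)-M_{\phi}(v)val(v')\bigr).
\]
Substituting $r_{\varphi}(E)=q\deg(\varphi)-|E\cap V(G)|$ into $r_{\varphi}(E)\le r_{\varphi}(\mathfrak{C},w)$ and transposing the $\deg(\varphi)$-terms to the left-hand side then gives exactly the asserted inequality.

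I do not expect a serious obstacle, since the curve-theoretic data (the degrees $\deg(\phi_{v})$, the ramification indices $U_{\phi}(e)$, the marked-point divisors $A_{v}$ and the genera $g_{v}$) have already been absorbed into Theorem~\ref{the7.6} and Lemma~\ref{lem7.5}, and the combinatorial input into \eqref{EQ1}. Two points nonetheless deserve care. The first is the validity of $r_{\varphi}(E)\le r_{\varphi}(\mathfrak{C},w)$: if a collapsed vertex $v\notin E$ has $M_{\phi}(v)=0$, then $M_{\phi}(v)-1=-1$ and the inequality can fail, so the clean statement is for nondegenerate $\varphi$ (or one restricts attention to those $\{a_{1},\dots,a_{q}\}$ whose preimage already contains every such exceptional vertex). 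The second is the weight bookkeeping in passing from the divisor form $\mathcal{K}_{(\mathfrak{C},w)}=\varphi^{*}\mathcal{K}_{(\mathfrak{C'},w')}+R_{\varphi}$ to the degree form used above: one must split each local contribution $K_{v}+A_{v}+2w(v)(v)$ correctly, use $\deg(\phi_{v}^{*}K_{v'})=\deg(\phi_{v})\deg(K_{v'})$ and Proposition~\ref{P7.4}, and keep the split consistent with Definitions~\ref{D2.12} and~\ref{D2.13}; this is routine but is exactly where an off-by-a-weight slip would most easily arise.
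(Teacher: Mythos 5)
Your proposal follows essentially the same route as the paper's own proof: the same ramification sums $r_{\varphi}(E)$ and $r_{\varphi}(\mathfrak{C})$, the same fibre-counting identity $r_{\varphi}(E)=q\deg(\varphi)-|E\cap V(G)|$ via Proposition~\ref{pro5.3}, and the same substitution of the degree form of Theorem~\ref{the7.6} into $r_{\varphi}(E)\le r_{\varphi}(\mathfrak{C})$. Your side remark that this last inequality genuinely requires $M_{\phi}(v)\ge 1$ off the fibre $E$ (i.e.\ nondegeneracy, or a hypothesis covering the exceptional vertices) is a valid point that the paper passes over by calling the inequality ``obvious.''
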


\begin{proof}
Set $$r_{\varphi}(E):=\sum_{v\in E\cap V(G)}(M_{\phi}(v)-1),$$ and $$r_{\varphi}(\mathfrak{C}):=\sum_{v\in V(G)}(M_{\phi}(v)-1).$$ It is obvious that $$r_{\varphi}(E)\leq r_{\varphi}(\mathfrak{C}).$$ From the definition of the degree of a harmonic morphism $\varphi: \mathfrak{C}\to \mathfrak{C'}$, we get that $$\sum_{v\in E\cap V(G)\atop a_j=\phi(v)}M_{\phi}(v)=\sum_{v\in V(G)\atop a_j=\phi(v)}M_{\phi}(v)=\deg(\phi)=\deg(\varphi)$$ holds for each $ a_j\in \{a_1, \ldots, a_q\}.$ Then we have  \begin{eqnarray*}r_{\varphi}(\mathfrak{C})\geq r_{\varphi}(E)&=&\sum_{v\in E\cap V(G)}(M_{\phi}(v)-1)\\\nonumber&=&(\sum\limits_{j=1}^q \sum\limits_{v\in E\cap V(G)\atop a_j=\phi(v)}M_{\phi}(v))-|E\cap V(G)|\\\nonumber&=&q\deg(\varphi)-|E\cap V(G)|.\end{eqnarray*}  On the other hand, by the Riemann-Hurwitz theorem (Theorem \ref{the7.6}), we have
\begin{eqnarray*}r_{\varphi}(\mathfrak{C})&=&
\sum_{v\in V(G)}(M_{\phi}(v)-1)\\&=& (g(\mathfrak{C}, w)-1)- \deg(\varphi)(g(\mathfrak{C'}, w')-1)-\sum_{v\in V(G)}(g_v-M_{\phi}(v)g_{v^{'}})\nonumber\\
&\;&-\sum_{v\in V(G)}\left(w(v)-w'(v')M_{\phi}(v)\right)-\frac{1}{2}\sum_{v\in V(G)}\left(val(v)-M_{\phi}(v)val(v')\right).
\end{eqnarray*}
Hence, we get the following inequality
\begin{eqnarray*}
&&(q+g(\mathfrak{C'}, w')-1)\deg(\varphi) \leq g(\mathfrak{C}, w)-1+|E\cap V(G)|-\sum_{v\in V(G)}(g_v-M_{\phi}(v)g_{v^{'}})\nonumber\\
&&-\sum_{v\in V(G)}\left(w(v)-w'(v')M_{\phi}(v)\right)-\frac{1}{2}\sum_{v\in V(G)}(val(v)-M_{\phi}(v)val(v')).
\end{eqnarray*}
\end{proof}

If the underlying metric graphs have no weights at all vertexes, then Theorem \ref{T7.7} yields the following corollary.\par

\begin{corollary} Let $\mathfrak{C}$, $\mathfrak{C'}$ be metrized complexes of algebraic curves over $\kappa,$  the underlying metric graphs of $\mathfrak{C}$ and $\mathfrak{C'}$ are $\Gamma$ and ${\Gamma}'$ respectively, suppose $(G, \ell)$ and $(G', {\ell}')$ are loopless models for metric graphs $\Gamma$ and ${\Gamma}'$ respectively, and $\varphi=(\phi, \{\phi_v\}_{v\in G})$ be a harmonic morphism. Suppose that $a_1, \ldots , a_q\in V(G')$ are distinct vertices. Let $E={\phi}^{-1}(\{a_1, \ldots, a_q\}).$ Then we have
\begin{eqnarray*}
(q+g(\mathfrak{C'})-1)\deg(\varphi)&\leq& g(\mathfrak{C})-1+|E\cap V(G)|-\sum_{v\in V(G)}\left(g_v-M_{\phi}(v)g_{v^{'}}\right)\nonumber\\&&
-\frac{1}{2}\sum_{v\in V(G)}(val(v)-M_{\phi}(v)val(v')),
\end{eqnarray*}
where $v'=\phi(v)$ and $|E\cap V(G)|$ is the cardinality of $E\cap V(G).$
\end{corollary}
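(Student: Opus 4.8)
The plan is to run the same Nevanlinna-type argument already used in this paper for finite graphs, vertex-weighted graphs and (vertex-weighted) metric graphs, now feeding in the Riemann-Hurwitz identity of Theorem \ref{the7.6}. The two bookkeeping quantities are the total horizontal ramification $r_{\varphi}(\mathfrak{C}):=\sum_{v\in V(G)}(M_{\phi}(v)-1)$ and its truncation $r_{\varphi}(E):=\sum_{v\in E\cap V(G)}(M_{\phi}(v)-1)$ to the vertices lying over the chosen points $a_1,\dots,a_q$. Since $\varphi$ is a harmonic morphism of metrized complexes, each $\phi_v$ is a finite morphism of curves, so $M_{\phi}(v)=\deg(\phi_v)\geq 1$ by condition (iv) of Definition \ref{D7.1}; hence every summand $M_{\phi}(v)-1$ is nonnegative and $r_{\varphi}(E)\leq r_{\varphi}(\mathfrak{C})$. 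This single inequality is the only place where information is lost; all remaining steps are identities.

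First I would evaluate $r_{\varphi}(E)$ exactly. Formula \eqref{EQ1} (equivalently Proposition \ref{pro5.3} combined with Definition \ref{def7.2}) gives $\sum_{v\in V(G),\,\phi(v)=a_j}M_{\phi}(v)=\deg(\varphi)$ for each of the $q$ distinct vertices $a_j$. Because $E\cap V(G)$ is the disjoint union over $j$ of the vertex-fibers $\phi^{-1}(a_j)\cap V(G)$, restricting the sums to $E\cap V(G)$ changes nothing; summing over $j=1,\dots,q$ and subtracting the number $|E\cap V(G)|$ of vertices involved yields
\begin{eqnarray*}
r_{\varphi}(E) &=& \sum_{j=1}^{q}\sum_{v\in E\cap V(G),\,\phi(v)=a_j}M_{\phi}(v)-|E\cap V(G)| \\
&=& q\deg(\varphi)-|E\cap V(G)|.
\end{eqnarray*}
Next I would read off $r_{\varphi}(\mathfrak{C})$ from Theorem \ref{the7.6}: dividing its second displayed identity by $2$ and solving for $\sum_{v\in V(G)}(M_{\phi}(v)-1)$ gives
\begin{eqnarray*}
r_{\varphi}(\mathfrak{C}) &=& \left(g(\mathfrak{C},w)-1\right)-\deg(\varphi)\left(g(\mathfrak{C'},w')-1\right)-\sum_{v\in V(G)}\left(g_v-M_{\phi}(v)g_{v'}\right) \\
&& -\sum_{v\in V(G)}\left(w(v)-w'(v')M_{\phi}(v)\right)-\frac{1}{2}\sum_{v\in V(G)}\left(val(v)-M_{\phi}(v)val(v')\right),
\end{eqnarray*}
where $v'=\phi(v)$ and where the weighted genera enter through $\deg(\mathcal{K}_{(\mathfrak{C},w)})=2g(\mathfrak{C},w)-2$ and $\deg(\mathcal{K}_{(\mathfrak{C'},w')})=2g(\mathfrak{C'},w')-2$ recorded in Subsection 2.5 and in Lemma \ref{lem7.5}. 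Substituting both displays into $r_{\varphi}(E)\leq r_{\varphi}(\mathfrak{C})$ and moving the $\deg(\varphi)$-terms to the left gives precisely the asserted inequality.

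Since the Riemann-Hurwitz theorem (Theorem \ref{the7.6}) and the pullback-degree identity (Lemma \ref{lem7.5}) are already available, the proof is essentially a rearrangement and I expect no genuine obstacle. The only point requiring a little care is the correct sign in the truncation step $r_{\varphi}(E)\leq r_{\varphi}(\mathfrak{C})$, which relies on nondegeneracy ($M_{\phi}(v)\geq 1$ for all $v$); this is guaranteed because each $\phi_v$ is finite, hence of degree at least one, by condition (iv) of Definition \ref{D7.1}. A second, purely clerical, point is to keep the weight-corrected genera $g(\mathfrak{C},w)$, $g(\mathfrak{C'},w')$ (rather than $g(\mathfrak{C})$, $g(\mathfrak{C'})$) consistent throughout, which is exactly what the weighted form of Theorem \ref{the7.6} supplies. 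Finally, the stated Corollary is immediate by specializing $w\equiv 0$, $w'\equiv 0$, so that $g(\mathfrak{C},w)=g(\mathfrak{C})$, $g(\mathfrak{C'},w')=g(\mathfrak{C'})$ and the sum $\sum_{v\in V(G)}\bigl(w(v)-w'(v')M_{\phi}(v)\bigr)$ vanishes term by term.
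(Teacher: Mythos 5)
Your argument is exactly the paper's: the corollary is obtained by running the truncation argument $r_{\varphi}(E)=q\deg(\varphi)-|E\cap V(G)|\leq r_{\varphi}(\mathfrak{C})$ against the Riemann--Hurwitz identity of Theorem \ref{the7.6} (which is precisely the proof of Theorem \ref{T7.7}) and then specializing $w\equiv 0$, $w'\equiv 0$. The only caveat, shared with the paper's own proof, is that the step $r_{\varphi}(E)\leq r_{\varphi}(\mathfrak{C})$ tacitly needs $M_{\phi}(v)\geq 1$ for all $v$; your justification via finiteness of $\phi_v$ is slightly circular since Definition \ref{D7.1} only supplies $\phi_v$ when $M_{\phi}(v)>0$, but this does not distinguish your argument from the paper's.
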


At the end of this section, we give an example to show the second main theorem for harmonic morphism on metrized complexes of algebraic curves without weights at vertexes.\par

\begin{figure}
  \centering
  \includegraphics[width=8cm]{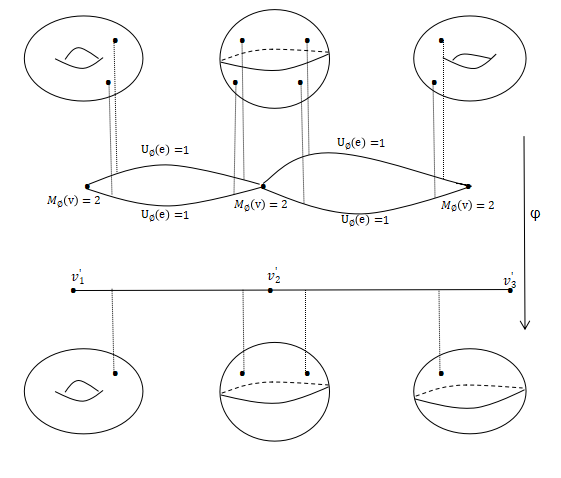}\\
  \caption{A harmonic morphism of degree two}\label{fig.8}
\end{figure}

\begin{example} Let $\mathfrak{C}$, $\mathfrak{C'}$ be metrized complexes of algebraic curves which genus $g(\mathfrak{C})=4$ and $g(\mathfrak{C'})=1$, respectively.  We give a harmonic morphism $\varphi$ (for suitable choices of lengths)  depicted in Figure \ref{fig.8}. We do not specify the lengths of edges of $\Gamma$ and ${\Gamma}'$.  It is known by calculation that $\deg(\varphi)=2,$ $\frac{1}{2}\left(val(v)-M_{\phi}(v)val(v')\right)=0.$\par

\begin{itemize}
  \item Take $q=3$ and let $\{a_1, \ldots, a_q\}=\{v_{1}', \ldots, v_{3}'\}=V(G^{'}).$ Then one can get that $|E\cap V(G)|=3$ and $\sum\limits_{v\in V(G)}(g_v-M_{\phi}(v)g_{v^{'}})=0.$ Then the second main theorem (Theorem \ref{T7.7}) gives $(3+1-1)\times 2\leq 4-1+3=6.$  This means that the inequality of the second main theorem is sharp.
  \item  Take $q=2.$ \begin{itemize}
                       \item Let $\{a_1, a_q\}=\{v_{1}',v_{2}'\}\subset V(G')$. Then $|E\cap V(G)|=2$ and $\sum\limits_{v\in V(G)}(g_v-M_{\phi}(v)g_{v^{'}})=-1,$ so we have the inequality $4 \leq 4-1+2+1=6$ by the second main theorem.
                       \item Let $\{a_1, a_q\}=\{v_{2}',v_{3}'\}\subset V(G')$. Then $|E\cap V(G)|=2$ and $\sum\limits_{v\in V(G)}(g_v-M_{\phi}(v)g_{v^{'}})=1,$  so we have the equality $4 \leq 4-1+2-1=4$ by the second main theorem.
                       \item Let $\{a_1, a_q\}=\{v_{1}',v_{3}'\}\subset V(G')$. Then $|E\cap V(G)|=2$ and $\sum\limits_{v\in V(G)}(g_v-M_{\phi}(v)g_{v^{'}})=0,$  so we have the inequality $4 \leq 4-1+2=5$ by the second main theorem.
                     \end{itemize}
  \item Take $q=1.$ \begin{itemize}
                      \item Let $a_q\in V(G')$ take $v_{1}'$. Then $|E\cap V(G)|=1$ and $\sum\limits_{v\in V(G)}(g_v-M_{\phi}(v)g_{v^{'}})=-1,$ so we have the inequality $2 \leq 4-1+1+1=5$ by the second main theorem.
                      \item Let $a_q\in V(G')$ take $v_{2}'$. Then $|E\cap V(G)|=1$, and$\sum\limits_{v\in V(G)}(g_v-M_{\phi}(v)g_{v^{'}})=0,$ so we have the inequality $2 \leq 4-1+1=4$ by the second main theorem.
                      \item Let $a_q\in V(G')$ take $v_{3}'$. Then $|E\cap V(G)|=1$, and$\sum\limits_{v\in V(G)}(g_v-M_{\phi}(v)g_{v^{'}})=1,$ so we have the inequality $2 \leq 4-1+1-1=3$ by the second main theorem.
                    \end{itemize}
\end{itemize}
\end{example}


\begin{thebibliography}{99}
\bibitem{AB-2015}
O.~Amini and M.~Baker.
\newblock Linear series on metrized complexes of algebraic curves.
\newblock {\em Mathematische Annalen}, 362(1):55--106, 2015.

\bibitem{ABBR1-2015}
O.~Amini, M.~Baker, E.~Brugall{\'e}, and J.~Rabinoff.
\newblock Lifting harmonic morphisms I: metrized complexes and berkovich
  skeleta.
\newblock {\em Research in the Mathematical Sciences}, 2(1):1--67, 2015.

\bibitem{ABBR2-2015}
O.~Amini, M.~Baker, E.~Brugall{\'e}, and J.~Rabinoff.
\newblock Lifting harmonic morphisms II: tropical curves and metrized
  complexes.
\newblock {\em Algebra $\&$ Number Theory}, 9(2):267--315, 2015.

\bibitem{AC-2013}
O.~Amini and L.~Caporaso.
\newblock Riemann-Roch theory for weighted graphs and tropical curves.
\newblock {\em Advances in Mathematics}, 240:1--23, 2013.

\bibitem{E.Arbarello-1985}
E.~Arbarello.
\newblock {\em Geometry of Algebraic Curves. Volume 1}.
\newblock Springer, 1985.

\bibitem{E.Arbarello-2011}
E.~Arbarello, M.~Cornalba, and P.~Griffiths.
\newblock {\em Geometry of algebraic curves: volume II with a contribution by
  Joseph Daniel Harris}, volume 268.
\newblock Springer Science $\&$ Business Media, 2011.

\bibitem{Bak-2008}
M.~Baker.
\newblock Specialization of linear systems from curves to graphs.
\newblock {\em Algebra $\&$ Number Theory}, 2(6):613--653, 2008.

\bibitem{BJ-2016}
M.~Baker and D.~Jensen.
\newblock Degeneration of linear series from the tropical point of view and
  applications.
\newblock In {\em Nonarchimedean and tropical geometry}, pages 365--433.
  Springer, 2016.

\bibitem{BN-2007}
M.~Baker and S.~Norine.
\newblock Riemann-Roch and Abel-Jacobi theory on a finite graph.
\newblock {\em Advances in Mathematics}, 215(2):766--788, 2007.

\bibitem{BN-2009}
M.~Baker and S.~Norine.
\newblock Harmonic morphisms and hyperelliptic graphs.
\newblock {\em International Mathematics Research Notices},
  2009(15):2914--2955, 2009.

\bibitem{BBM-2011}
B.~Bertrand, E.~Brugall{\'e}, and G.~Mikhalkin.
\newblock Tropical open hurwitz numbers.
\newblock {\em Rendiconti del Seminario Matematico della Universit{\'a} di
  Padova}, 125:157--171, 2011.

\bibitem{Cap-2014}
L.~Caporaso.
\newblock Gonality of algebraic curves and graphs.
\newblock In {\em Algebraic and complex geometry}, pages 77--108. Springer,
  2014.

\bibitem{cartwright2020tropical}
D. ~Cartwright.
\newblock Tropical complexes.
\newblock{\em Manuscripta Mathematica},
 163(1): 227--261, 2020.


\bibitem{Cha-2013}
M.~Chan.
\newblock Tropical hyperelliptic curves.
\newblock {\em Journal of Algebraic Combinatorics}, 37(2):331--359, 2013.

\bibitem{chern1960complex}
S. S. Chern.
\newblock Complex analytic mappings of Riemann surfaces I.
\newblock {\em American Journal of Mathematics}, 82(2):323--337, 1960.

\bibitem{GK-2008}
A.~Gathmann and M.~Kerber.
\newblock A Riemann-Roch theorem in tropical geometry.
\newblock {\em Mathematische Zeitschrift}, 259(1):217--230, 2008.

\bibitem{HKN-2013}
J.~Hladk{\`y}, D.~Kr{\'a}l', and S.~Norine.
\newblock Rank of divisors on tropical curves.
\newblock {\em Journal of Combinatorial Theory, Series A}, 120(7):1521--1538,
  2013.

\bibitem{SK-2015}
S.~Kawaguchi and K.~Yamaki.
\newblock Rank of divisors on hyperelliptic curves and graphs under
  specialization.
\newblock {\em International Mathematics Research Notices},
  2015(12):4121--4176, 2015.

\bibitem{LM-2018}
Y.~Luo and M.~Manjunath.
\newblock Smoothing of limit linear series of rank one on saturated metrized
  complexes of algebraic curves.
\newblock {\em Canadian Journal of Mathematics}, 70(3):628--682, 2018.

\bibitem{Mik-2006}
G.~Mikhalkin.
\newblock Tropical geometry and its applications.
\newblock {\em (English summary) International Congress of Mathematicians.},
  II:827--852, 2006.

\bibitem{MZ-2008}
G.~Mikhalkin and I.~Zharkov.
\newblock Tropical curves, their Jacobians and theta functions.
\newblock {\em Curves and abelian varieties}, 465:203--230, Contemp. Math., 465, Amer. Math. Soc., Providence, RI, 2008.

\bibitem{ru-2021}
M.~Ru.
\newblock {\em Nevanlinna theory and its relation to Diophantine
  approximation}.
\newblock World Scientific, second edition, 2021.

\bibitem{ru-ugur-2017}
M.~Ru and G.~Ugur.
\newblock Uniqueness results for algebraic and holomorphic curves into
  $\mathbb{P}^{n}(\mathbb{C})$.
\newblock {\em International Journal of Mathematics}, 28(09):1740003, 2017.

\bibitem{sumi2021tropical}
 K.~Sumi.
\newblock Tropical theta functions and Riemann--Roch inequality for tropical Abelian surfaces.
\newblock{\em Mathematische Zeitschrift}.
297(3):1329--1351, 2021.

\bibitem{Ana-2000}
H.~Urakawa.
\newblock A discrete analogue of the harmonic morphism.
\newblock {\em Harmonic morphisms, harmonic maps and related topics. Research
  Notes in Mathematics}, 413:97--108, 1999.

\bibitem{Ura-2000}
H.~Urakawa.
\newblock A discrete analogue of the harmonic morphism and green kernel
  comparison theorems.
\newblock {\em Glasgow Mathematical Journal}, 42(3):319--334, 2000.

\end{thebibliography}
\end{document}